\definecolor{darkgreen}{rgb}{0.0, 0.6, 0.0}
\def\-{\raisebox{.75pt}{-}}
\numberwithin{table}{section}
\numberwithin{equation}{section}
\theoremstyle{definition}
\newtheorem{defn}{Definition}[section]
\newtheorem{exmp}{Example}[section]
\newtheorem{rmk}{Remark}[section]
\theoremstyle{plain}
\newtheorem{lem}{Lemma}[section]
\newtheorem{prop}{Proposition}[section]
\newtheorem{thm}{Theorem}[section]
\newtheorem{cor}{Corollary}[section]
\title{A coadjoint orbit-like construction for Jordan superalgebras}
\author[1]{Florio M. Ciaglia}
\author[2]{Shuhan Jiang}
\author[2]{Jürgen Jost}
\author[3]{Lorenz Schwachhöfer}
\date{}
\affil[1]{Department of Mathematics, University Carlos III de Madrid, Legan\'es, Madrid, Spain}
\affil[2]{Max Planck Institute for Mathematics in the Sciences, Leipzig, Germany}
\affil[3]{Faculty for Mathematics, TU Dortmund University, Dortmund, Germany}
\begin{document}
	
	\maketitle
	\begin{abstract}
		We investigate the canonical pseudo-Riemannian metrics associated with Jordan-analogues of the coadjoint orbits for pseudo-Euclidean Jordan superalgebras.
	\end{abstract}

	\section{Introduction}

    Both Lie and Jordan algebras are  algebras that play an important role in mathematical formulations of quantum mechanics, but their defining identities and their structure theories look quite different from each
    other. Nevertheless, an analogy emerges when (under some technical assumptions about integrability that we suppress in this introduction) we consider the orbits of the structure group \cite{Ciaglia23}.  The structure group of a finite-dimensional (non-associative) algebra $A$ is the Lie subgroup of the general linear group $\mathrm{GL}(A)$ generated by left multiplications $L_a$ with $a \in A$. The group $\mathrm{GL}(A)$  acts canonically on the dual $A^*$ of $A$.  In the case where $A = \mathfrak{g}$ is the Lie algebra of the connected Lie group $G$, the orbits of the structure group in $\mathfrak{g}^*$ are known as the coadjoint orbits of the group. Kirillov's pioneering study reveals that there exists a correspondence between coadjoint orbits and the unitary representations of $G$ \cites{Kirillov1962,Kirillov2012}.
	The coadjoint orbits carry a symplectic form, defined as
	\begin{align}\label{kirisymp}
		\omega_{\xi}(X,Y) := \xi([X,Y]), \quad \xi \in \mathfrak{g}^*, \quad X,Y \in \mathfrak{g}.
	\end{align}
	The study of the geometric quantization of these symplectic manifolds provides an alternative point of view on Kirillov's orbit method.
	
	It has been recently shown in \cite{Ciaglia23} that for an arbitrary algebra $A$, replacing the Lie bracket by the product of $A$, the formula \eqref{kirisymp} defines a $(0,2)$-tensor ${\mathcal g}$ on the regular part of the orbits of the structure group in the dual space $A^*$. In particular, if $A = J$ is a Jordan algebra, then ${\mathcal g}$ is symmetric, and a description of regular points in $J^\ast$ was given. In case of an Euclidean Jordan algebra $J$, where $J \cong J^\ast$ canonically, the orbit through the identity element consists of regular points only, and ${\mathcal g}$ defines a Riemannian metric on this orbit. As it turns out, this Riemannian metric is closely related to the Fisher-Rao metric when $J=\mathbb{R}^{n}$, and to the Bures-Helstrom metric tensor when $J =\mathcal{B}_{sa}(\mathcal{H})$, the Jordan algebra of self-adjoint linear operators on a finite-dimensional Hilbert space $\mathcal{H}$. These two metric tensors are, respectively, at the heart of the geometrical formulation of classical and quantum estimation theory \cite{C-J-S-2020-02}, and their role in classical and quantum information geometry is difficult to overestimate \cite{A-N-2000,Paris-2009}.
	
	Both Lie and Jordan algebras admit natural super extensions. In fact, when we pass to the super setting, the   analogy between the two types of algebras becomes clearer.     The Jacobi identity for the left translations of a Lie superalgebra is
    \begin{align}\label{jaclie}
        	[L_a, L_{[b,c]}]=[L_{[a,b]},L_c] + (-1)^{|a||b|}[L_b, L_{[a,c]}],
    \end{align}
    where $|a|$ denotes the parity of $a$ ($=0$ for even, $=1$ for odd elements), while for a Jordan superalgebra we have
    \begin{align}\label{jacjor}
			[L_a, L_{\{b,c\}}]=[L_{\{a,b\}},L_c] - (-1)^{|a||b|}[L_b, L_{\{a,c\}}].
	\end{align}
	where $\{\cdot,\cdot\}$ is the algebraic product of $J$ and $[\cdot,\cdot]$ is the Lie superbracket of its structure Lie algebra $\mathfrak{g}(J)$. The analogy between \eqref{jaclie} and \eqref{jacjor} points the way for a structure theory.
            
    A nice classification result of simple Lie and Jordan superalgebra over an algebraically closed field of characteristic 0 was obtained by Kac in the late 70s \cite{Kac77}. Two decades later, another classification result for simple Jordan superalgebras with semi-simple even parts over fields of characteristic different from $2$ was obtained by Racine and Zel’manov \cite{Racine2003}. The structure Lie superalgebras of simple Jordan superalgebras were computed by Barbier and Coulembier in \cite{Barbier2018}. Meanwhile, Kirillov's orbit method was extended to nilpotent Lie supergroups by Salmasian \cite{Salmasian2010}. This paper will provide a Jordan counterpart of the story, which can be viewed as a super extension of the results of \cite{Ciaglia23}.
          
	We are mainly interested in the pseudo-Euclidean Jordan superalgebras, namely, Jordan superalgebras which can be equipped with an even, symmetric, nondegenerate, associative bilinear form $\beta$. In this context, we also detect a new phenomenon distinct from non-super cases: the positive definiteness of the even part of $\beta$ does not ensure the positive definiteness of the even part of the canonical Killing form of Jordan superalgebras, as illustrated by a class of Jordan superalgebras $D(t)$. 
	
	\section{Preliminaries}
	
	\subsection{Superalgebras}
	
	A superalgebra $A$ is a $\mathbb{Z}_2$-graded real algebra $A=A_0 \oplus A_1$ such that $A_iA_j \subset A_{i+j}$. An element $a$ in $A_i$ is said to have parity $|a|=i$. $a$ is called even if $|a|=0$ and odd if $|a|=1$. $A$ is said to be commutative if
	$
		ab=(-1)^{|a||b|}ba
	$
	and anticommutative if
	$
	ab=-(-1)^{|a||b|}ba.
	$
	A Lie superalgebra $L$ is an anticommutative superalgebra satisfying the super Jacobi identity
	\begin{align*}
		[a, [b,c]]=[[a,b],c] + (-1)^{|a||b|}[b, [a,c]].
	\end{align*} 
    	
	Let $A$ be a $\mathbb{Z}_2$-graded real vector space with $\dim(A_0)=m$ and $\dim(A_1)=n$. We use the notation $m|n$ to denote the (super) dimension of $A$. The endomorphism algebra $\mathrm{gl}(A)$ of $A$ is an associative superalgebra under compositions of endomorphisms. Moreover, $\mathrm{gl}(A)$ can be given a Lie superalgebra structure by setting its Lie superbracket to be
	$
		[f,g]=f \circ g - (-1)^{|f||g|}g \circ f.
	$
    In the case of $A$ being a superalgebra, an endomorphism $d$ is said to be a (super)derivation if 
    \begin{align}\label{supder}
    	d(ab)=d(a)b+(-1)^{|d||a|}ad(b).
    \end{align}
    Derivations of $A$ form a Lie subsuperalgebra of $\mathrm{gl}(A)$, denoted by $\mathrm{Der}(A)$. Let $L_a$ denote the endomorphism on $A$ induced by the left multiplication of $a \in A$ on $A$. $L_a$ generate another Lie subsuperalgebra of $\mathrm{gl}(A)$, denoted by $\mathfrak{g}(A)$. We call $\mathfrak{g}(A)$ the structure Lie superalgebra of $A$. Let $\mathrm{Der}_0(A) = \mathrm{Der}(A) \cap \mathfrak{g}(A)$. $\mathrm{Der}_0(A)$ is again a Lie superalgebra. The elements in $\mathrm{Der}_0(A)$ are called inner derivations of $A$. Clearly, $\mathrm{Der}_0(A)$ is an ideal of $\mathrm{Der}(A)$. Derivations which are not inner are called outer derivations.
    
    For $l \in \mathrm{gl}(A)$, one can decompose it into four parts $l_{00}, l_{01}, l_{10}, l_{11}$, where $l_{ij}$ maps $A_i$ into $A_j$. The supertrace of $l$ is defined as
    \begin{align*}
    	\mathrm{str}(l):=\mathrm{tr}(l_{00})-\mathrm{tr}(l_{11}).
    \end{align*}
    \begin{lem}\label{str}
    	$\mathrm{str}([f,g])=0$ for $f,g \in \mathrm{gl}(A)$.
    \end{lem}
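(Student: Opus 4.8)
The plan is to reduce the statement to a super-cyclicity property of the supertrace and then verify that property by a direct block computation. Since the superbracket $[f,g]=f\circ g-(-1)^{|f||g|}g\circ f$ is bilinear and $\mathrm{str}$ is linear, it suffices to treat homogeneous $f,g$ of definite parity and then extend to arbitrary $f,g$ by expanding into homogeneous components. So I would fix homogeneous $f,g$ and aim to prove the identity $\mathrm{str}(f\circ g)=(-1)^{|f||g|}\,\mathrm{str}(g\circ f)$, from which the claim is immediate: $\mathrm{str}([f,g])=\mathrm{str}(f\circ g)-(-1)^{|f||g|}\mathrm{str}(g\circ f)=0$.

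To establish this super-cyclicity I would represent each endomorphism as a $2\times 2$ block matrix with respect to the splitting $A=A_0\oplus A_1$, so that the diagonal blocks are the parts $l_{00}$ and $l_{11}$ entering $\mathrm{str}(l)=\mathrm{tr}(l_{00})-\mathrm{tr}(l_{11})$ and the off-diagonal blocks are $l_{01}$ and $l_{10}$. An even endomorphism is then block-diagonal and an odd one block-off-diagonal. After multiplying the block matrices, the only nontrivial input is the ordinary cyclicity $\mathrm{tr}(pq)=\mathrm{tr}(qp)$ applied blockwise.

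There are three cases. If at least one of $f,g$ is even, then $(-1)^{|f||g|}=1$: when both are even the diagonal blocks multiply diagonally and ordinary cyclicity gives $\mathrm{str}(f\circ g)=\mathrm{str}(g\circ f)$, whereas if one is even and the other odd then both composites are odd, hence have vanishing diagonal blocks and zero supertrace. The decisive case is $f,g$ both odd, where $(-1)^{|f||g|}=-1$: here each composite is even, and a short computation shows that the upper-left block of $f\circ g$ agrees under trace-cyclicity with the lower-right block of $g\circ f$ (and vice versa), so the two blocks exchange their roles in the supertrace and produce exactly the sign $-1$ matching $(-1)^{|f||g|}$. I expect this odd--odd case to be the only subtle point: the sign in the super bracket is precisely engineered to cancel the sign flip coming from the supertrace, and keeping that bookkeeping straight—rather than any genuine difficulty—is the crux of the argument.
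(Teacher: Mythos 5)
Your proof is correct and takes essentially the same route as the paper's: decompose endomorphisms into blocks relative to $A=A_0\oplus A_1$ and invoke ordinary cyclicity of the trace. If anything, your write-up is more complete, since the paper's one-line computation $\mathrm{str}([f,g])=\mathrm{tr}([f_{00},g_{00}])-\mathrm{tr}([f_{11},g_{11}])=0$, read literally, only accounts for the even parts of $f$ and $g$, and it is precisely your odd--odd case (the cancellation $\mathrm{tr}(f_{10}g_{01}+g_{10}f_{01})=\mathrm{tr}(f_{01}g_{10}+g_{01}f_{10})$ by cyclicity) that justifies discarding the remaining cross terms.
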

    \begin{proof}
    	$\mathrm{str}([f,g])=\mathrm{tr}([f_{00},g_{00}])-\mathrm{tr}([f_{11},g_{11}])=0$.
    \end{proof}

    A bilinear form $\beta$ on $A$ is said to be symmetric if $\beta(a,b)=(-1)^{|a||b|}\beta(b,a)$; non-degenerate if $\beta(a,b)=0$ for all $b$ implies that $a=0$; associative if $\beta(ab,c)=\beta(a,bc)$.  If $A$ is a commutative superalgebra, one can define a canonical even symmetric bilinear form $\tau$ on $A$ by
    \begin{align*}
    	\tau(a,b)=\mathrm{str}(L_{ab}).
    \end{align*}
    We call a commutative superalgebra $A$ pseudo-Euclidean if it has an even, symmetric, non-degenerate, and associative bilinear form $\beta$, and semi-simple if the canonical bilinear form $\tau$ of $A$ is non-degenerate and associative. Let $(A,\beta)$ be a pseudo-Euclidean commutative superalgebra. An ideal $I$ of $A$ is said to be $\beta$-non-degenerate if the restriction of $\beta$ to $I$ is non-degenerate. A $\beta$-non-degenerate ideal $I$ of $A$ is said to be $\beta$-irreducible if it has no $\beta$-non-degenerate ideal other than $\{0\}$ and itself.
    
    \begin{prop}\label{betadecomp}
    	Every finite-dimensional pseudo-Euclidean commutative superalgebra $(A,\beta)$ admits a decomposition $A=\bigoplus_i I_i$ such that
    	\begin{enumerate}
    		\item $I_i$ is $\beta$-irreducible for all $i$;
    		\item $\beta(I_i, I_j)=0$ whenever $i \neq j$.
    	\end{enumerate}
    \end{prop}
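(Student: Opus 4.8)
The plan is to argue by induction on the total dimension of $A$, the engine of the induction being a splitting lemma: every proper nonzero $\beta$-non-degenerate ideal admits an orthogonal complement that is again an ideal. So first I would record this splitting. Let $I \subseteq A$ be a $\beta$-non-degenerate ideal and put $I^\perp = \{a \in A : \beta(a,I)=0\}$. Because $\beta$ is even we have $\beta(A_0,A_1)=0$, so $I^\perp$ is graded and orthogonality may be computed grade by grade; since $\beta|_I$ is non-degenerate this gives $I \cap I^\perp = \{0\}$, and a dimension count in the even and odd parts separately yields $A = I \oplus I^\perp$ with $\beta(I,I^\perp)=0$. The crucial point is that $I^\perp$ is itself an ideal: for homogeneous $x \in I^\perp$, $a \in A$ and $y \in I$, super-commutativity together with associativity of $\beta$ give
\begin{align*}
\beta(ax,y) = (-1)^{|a||x|}\beta(xa,y) = (-1)^{|a||x|}\beta(x,ay),
\end{align*}
and $ay \in I$ since $I$ is an ideal, whence $\beta(ax,y)=0$ and $ax \in I^\perp$; extending by bilinearity settles the general case. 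Finally $I^\perp$ is $\beta$-non-degenerate, for if $x \in I^\perp$ is orthogonal to all of $I^\perp$ then, being also orthogonal to $I$, it is orthogonal to $A = I \oplus I^\perp$ and so vanishes by non-degeneracy of $\beta$.

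With the splitting in hand the induction is routine. If $A$ is $\beta$-irreducible (in particular whenever $\dim A \le 1$), the trivial decomposition $A = A$ already works. Otherwise choose a $\beta$-non-degenerate ideal $I$ with $\{0\} \neq I \neq A$ and split $A = I \oplus I^\perp$ as above. Both $(I,\beta|_I)$ and $(I^\perp,\beta|_{I^\perp})$ are again pseudo-Euclidean commutative superalgebras — evenness, symmetry, associativity and commutativity are inherited, and non-degeneracy holds by construction — and each has strictly smaller dimension. By the inductive hypothesis they decompose into mutually orthogonal $\beta$-irreducible ideals; concatenating the two lists produces the claimed decomposition of $A$, with all cross terms between a summand of $I$ and a summand of $I^\perp$ vanishing because $\beta(I,I^\perp)=0$.

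The one point that must be checked for this bookkeeping to be legitimate — and the place where I expect the only real friction — is that the notion of $\beta$-irreducible ideal is the same whether read inside $I$ or inside $A$, so that the summands produced by the induction are genuinely $\beta$-irreducible ideals of $A$. This rests on the observation that $I \cdot I^\perp \subseteq I \cap I^\perp = \{0\}$, since $I$ and $I^\perp$ are both ideals. Consequently, given an ideal $J$ of $I$ and writing $a = a_I + a_{I^\perp} \in A$, one has $a_{I^\perp}J = 0$ and $a_I J \subseteq J$, so $aJ \subseteq J$ and $J$ is already an ideal of $A$. Hence the $\beta$-non-degenerate ideals of $A$ contained in $I$ are exactly the $\beta$-non-degenerate ideals of $I$, and $\beta$-irreducibility transfers correctly between the two viewpoints, completing the argument.
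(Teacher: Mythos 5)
Your proof is correct and takes essentially the same route as the paper's: show that the orthogonal complement $I^\perp$ of a $\beta$-non-degenerate ideal $I$ is again a $\beta$-non-degenerate ideal (via associativity and supercommutativity of $\beta$), split $A = I \oplus I^\perp$, and induct on the finite dimension. Your extra checks --- that $I^\perp$ is graded, and that $I \cdot I^\perp = \{0\}$ so ideals and $\beta$-irreducibility transfer correctly between $I$ and $A$ --- are details the paper's one-line appeal to induction leaves implicit, and they are worth having.
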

    \begin{proof}
    	Let $I$ be a $\beta$-non-degenerate ideal of $A$ and $I^{\perp}$ be the orthogonal complement of $I$ respect to $\beta$. Since $\beta$ is non-degenerate, $I \cap I^{\perp} = \{0\}$ and we have $J = I \oplus I^{\perp}$. For $x \in I^{\perp}$, $y \in I$, and $z \in A$, we have $\beta(xz,y)=\beta(x,zy)=0$ since $yz \in I$. Therefore, $I^{\perp}$ is also a $\beta$-non-degenerate ideal of $A$. The proposition can be then proved by induction because the dimension of $A$ is finite.
    \end{proof}

    \begin{rmk}
    	Let $R$ be an associative commutative superalgebra. An $R$-superalgebra $A$ is defined to be a superalgebra $A$ equipped with a left $R$-module structure such that $R_iA_j \subset A_{i+j}$ and
    	\begin{align*}
    		r(ab)=(ra)b=(-1)^{|r||a|}a(rb)
    	\end{align*}
    	for $r \in R$ and $a,b \in A$. If $R$ is unital, we require additionally that $1 a = a$. All of the above discussions can be easily extended to the case of $R$-superalgebras by simply requiring every multilinear map $f$ to be $R$-multilinear. That is,
    	\begin{align*}
    		rf(a_1,\cdots,a_q)=(-1)^{|r|(|f|+|a_1|+\cdots+|a_{p-1}|)}f(a_1,\cdots,ra_p,\cdots,a_q).
    	\end{align*} 
    \end{rmk}

    Let $(A,\beta)$ be a pseudo-Euclidean superalgebra of dimension $m|n$. Since $\beta$ is non-degenerate, $n$ must be even. The odd part $\beta_1=\beta|_{A_1}$ of such $\beta$ can always be brought into the form
    \begin{equation*}
    	\beta_1 = 
    	\begin{bmatrix}
    		0 & I_{n/2} \\
    		-I_{n/2} & 0 
    	\end{bmatrix}
    \end{equation*}
    where $I_{n/2}$ is the $n/2 \times n/2$ identity matrix. The even part $\beta_0=\beta|_{A_0}$ of $\beta$ is a symmetric matrix and can be diagonalized into the form
    \begin{equation*}
    	\beta_0 = 
    	\begin{bmatrix}
    		I_r & 0 \\
    		0 & -I_s 
    	\end{bmatrix}
    \end{equation*}
    where $r+s=m$. $(r,s)$ is said to be the signature of $(A,\beta)$. Note that the signature depends on the choice of $\beta$, hence, it is not an intrinsic invariant of $A$. $(A,\beta)$ is said to be Euclidean if it has signature $(m,0)$, i.e., if $\beta_0$ is positive definite. $A$ is said to be positive if the canonical bilinear form $\tau$ of $A$ is associative and $\tau_0$ is positive definite.
    
    \subsection{Supermanifolds}
    
    A superdomain $U^{m|n}$ is a ringed space $(U, \mathcal{O}_{U^{m|n}})$ where $U$ is an open subset of $\mathbb{R}^m$ and $\mathcal{O}_{U^{m|n}}(V) = C^{\infty}(V)\otimes \Lambda(\mathbb{R}^n)$ for each open subset $V \subset U$, where $\Lambda(\mathbb{R}^n)$ is the exterior algebra of $\mathbb{R}^n$.
    A supermanifold is a ringed space $\mathcal{M}=(M,\mathcal{O}_{\mathcal{M}})$ where $M$ is a second countable Hausdorff topological space and $\mathcal{O}_{\mathcal{M}}$ is a sheaf of (unital associative) commutative superalgebras over $M$ such that for each $x \in M$ there exist an open neighborhood $U \ni x$ and a ringed space isomorphism
    \begin{align*}
    	(U,\mathcal{O}_{\mathcal{M}}|_U) \xrightarrow{\varphi_U=(\widetilde{\varphi_U},\varphi_U^{\sharp})} U^{m|n},
    \end{align*}
    where $U^{m|n}$ is a superdomain \cite{Leites80}. $(U,\varphi_U)$ is called a chart of $\mathcal{M}$. 
    
    By definition, the underlying topological space $M$ of a supermanifold $\mathcal{M}$ is a smooth manifold. $M$ itself can also be viewed as a supermanifold with the structure sheaf being the sheaf $C^{\infty}$ of smooth functions. There exists a unique ringed space morphism $M \xrightarrow{(\mathrm{Id}, \epsilon)} \mathcal{M}$, where $\epsilon: \mathcal{O}_{\mathcal{M}}(V) \rightarrow C^{\infty}(V)$ is known as the body map for any $V$ open in $M$. A morphism between two supermanifolds is just a morphism between the corresponding ringed spaces.
    
    An open neighborhood $U \in x$ is called splitting if $\mathcal{O}_M(U) \cong C^{\infty}(U) \otimes \Lambda(\mathbb{R}^n)$. A (local) coordinate system of $\mathcal{M}$ is the data $(U,u^i,\theta^j)$ where $U$ is a splitting open neighborhood in $M$, $u^1, \cdots, u^m \in \mathcal{O}_M(U)_0$ such that $(U, \epsilon(u^i))$ is a coordinate system of $M$, and $\theta^1, \cdots, \theta^n \in \mathcal{O}_M(U)_1$ freely generate $\mathcal{O}_M(U)$ over $C^{\infty}(U)$. Every coordinate system determines a chart of $\mathcal{M}$. 
    
    A vector field $X$ over $\mathcal{M}$ is a derivation of the commutative superalgebra $\mathcal{O}_{\mathcal{M}}(M)$. The tangent bundle $T\mathcal{M}$ of $\mathcal{M}$ is defined as the sheaf $\mathfrak{X}_{\mathcal{M}}$ of all vector fields over $\mathcal{M}$. $\mathfrak{X}_{\mathcal{M}}$ is also called the tangent sheaf of $\mathcal{M}$. It is a locally free $\mathcal{O}_{\mathcal{M}}$-module. Indeed, let $(U,u^i,\theta^j)$ be a coordinate system, $\mathfrak{X}_{\mathcal{M}}(U)$ is a free $\mathcal{O}_{\mathcal{M}}(U)$-module with basis $(\frac{\partial}{\partial u^i}, \frac{\partial}{\partial \theta^j})$, where $\frac{\partial}{\partial u^i}$ and $\frac{\partial}{\partial \theta_j}$ act on $\mathcal{O}_{\mathcal{M}}(U)$ in the following way
    \begin{align*}
    	&\frac{\partial}{\partial u^i}(f \theta^{j_1} \cdots \theta^{j_q}) = \frac{\partial f}{\partial u^i} \theta^{j_1} \cdots \theta^{j_q}, \\
    	&\frac{\partial}{\partial \theta^j}(f \theta^j \theta^{j_1} \cdots \theta^{j_q}) =  f \theta^{j_1} \cdots \theta^{j_q}, 
    \end{align*}
    where $f \in C^{\infty}(U)$ and $j \notin\{j_1,\cdots,j_q\}$. 
    
    Let $\mathcal{O}_{\mathcal{M},x}$ denote the stalk of $\mathcal{O}_{\mathcal{M}}$ at $x \in M$. A tangent vector of $\mathcal{M}$ at $x$ is a derivation $\mathcal{O}_{\mathcal{M},x} \rightarrow \mathbb{R}$. Let $\epsilon_x: \mathcal{O}_{\mathcal{M},x} \rightarrow \mathbb{R}$ denote the algebra homomorphism given by $\mathcal{O}(M) \xrightarrow{\epsilon} C^{\infty}(M) \xrightarrow{\mathrm{ev}_x} \mathbb{R}$, where $\epsilon$ is the body map and $\mathrm{ev}_x$ is the evaluation map of a smooth function on $M$ at $x$. Each vector field over $\mathcal{M}$ determines a tangent vector at $x$ via composition with $\epsilon_x$. However, contrary to the ordinary case, a vector field is not fully determined by its tangent vectors over $M$. This is not so surprising because locally the tangent space $T_x\mathcal{M}$ of all tangent vectors at $x$ is isomorphic to $\mathrm{span}_{\mathbb{R}}\{\frac{\partial}{\partial u^i},\frac{\partial}{\partial \theta^j}\}$, while the stalk $\mathfrak{X}_{\mathcal{M},x}$ of $\mathfrak{X}_{\mathcal{M}}$ at $x$ is isomorphic to $\mathrm{span}_{\mathcal{O}_{\mathcal{M},x}}\{\frac{\partial}{\partial u^i},\frac{\partial}{\partial \theta^j}\}$. Let $\mathfrak{m}_x \subset \mathcal{O}_{\mathcal{M},x}$ denote the kernel of $\epsilon_x$. One has $\mathcal{O}_{\mathcal{M},x} \cong \mathbb{R} \oplus \mathfrak{m}_x$ and it follows that $T_x \mathcal{M} \cong \mathfrak{X}_{\mathcal{M},x}/\mathfrak{m}_x\mathfrak{X}_{\mathcal{M},x}$.
    
    A (pseudo-)Riemannian metric $\mathcal{g}$ over $\mathcal{M}$ is an even symmetric non-degenerate $\mathcal{O}_{\mathcal{M}}$-linear morphism of sheaves
    $
    	\mathfrak{X}_{\mathcal{M}} \otimes_{\mathcal{O}_{\mathcal{M}}} \mathfrak{X}_{\mathcal{M}} \rightarrow \mathcal{O}_{\mathcal{M}}.
    $
    By composing with $\epsilon_x$, $\mathcal{g}$ determines an even symmetric non-degenerate bilinear form $\mathcal{g}_x$ on the tangent space $T_x \mathcal{M}$. Likewise, $\mathcal{g}$ is not fully determined by $\mathcal{g}_x$.
    
    \subsection{Lie supergroups}
    
    A Lie supergroup $\mathcal{G}=(G,\mathcal{O}_{\mathcal{G}})$ is a group object in the category of supermanifolds. A morphism between two Lie supergroups is just the morphism between the corresponding group objects. An action of $\mathcal{G}$ on a supermanifold $\mathcal{M}$ is just the action of the corresponding group object.
    
    More concretely, a Lie supergroup $\mathcal{G}$ is uniquely determined by the so-called super Harish-Chandra pair $(G,\mathfrak{g})$ \cite{Carmeli11}, where $G$ is an ordinary Lie group, $\mathfrak{g}=\mathfrak{g}_0 \oplus \mathfrak{g}_1$ is a Lie superalgebra with $\mathfrak{g}_0 \cong \mathrm{Lie}(G)$, and $G$ acts on $\mathfrak{g}$ via a representation $\sigma$ such that $d\sigma(\xi)\xi' = [\xi,\xi']$ for $\xi \in \mathfrak{g}_0$ and $\xi' \in \mathfrak{g}$, where $d\sigma$ is the differential of $\sigma$.  Given two Lie supergroups $\mathcal{G}=(G, \mathfrak{g})$ and $\mathcal{G}'=(G',\mathfrak{g}')$. A morphism from $\mathcal{G}$ to $\mathcal{G}'$ is determined by a pair $(\phi, \varphi)$ where
    \begin{enumerate}
    	\item $\phi: G \rightarrow G'$ is a Lie group homomorphism;
    	\item $\varphi: \mathfrak{g} \rightarrow \mathfrak{g}'$ is a Lie superalgebra homomorphism;
    	\item $\phi$ and $\varphi$ are compatible in the sense that
    	\begin{align*}
    		\varphi|_{\mathfrak{g}_0} = d \phi, \quad \sigma'(\phi(g)) \circ \varphi = \varphi \circ \sigma(g), ~\forall g \in G, 
    	\end{align*}
    	where $\sigma$, $\sigma'$ are the representations of $G$, $G'$ on $\mathfrak{g}$, $\mathfrak{g}'$, respectively.
    \end{enumerate}
    
    For example, one can consider the super Harish-Chandra pair $\mathfrak{Diff}(\mathcal{M})=(\mathfrak{Diff}(\mathcal{M})_0,\mathfrak{X}_{\mathcal{M}}(M))$, where $\mathfrak{Diff}(\mathcal{M})_0$  is the infinite dimensional Lie group of automorphisms of a supermanifold $\mathcal{M}$ and $\mathfrak{X}_{\mathcal{M}}(M)$ is the Lie superalgebra of global vector fields over $\mathcal{M}$. An action of a super Lie group $\mathcal{G}=(G, \mathfrak{g})$ on $\mathcal{M}$ is just a morphism $(\phi,\varphi): \mathcal{G} \rightarrow \mathfrak{Diff}(\mathcal{M})$ of super Lie groups. $\varphi: \mathfrak{g} \rightarrow \mathfrak{X}_{\mathcal{M}}(M)$ is called the infinitesimal action of $\mathfrak{g}$.
    
    As another example,  the structure Lie superalgebra $\mathfrak{g}(A)$ of a superalgebra $A$ and the Lie group $G(A)_0 \subset \mathrm{gl}(A)_0$ generated by $\mathfrak{g}(A)_0$ form a super Harish-Chandra pair $G(A)=(G(A)_0,\mathfrak{g}(A))$ which we call as the structure Lie supergroup of $A$. $G(A)$ acts on $A$ in the obvious canonical way.
    
    There is also a well-defined notion of a $\mathcal{G}$-orbit $\mathcal{O}$ for a $\mathcal{G}$-supermanifold $\mathcal{M}$ which we will not address in details here. We simply remark that $\mathcal{O}$ is an immersed sub-supermanifold of $\mathcal{M}$ whose tangent spaces can be identified with the ``tangent spaces of a generalized distribution" over $\mathcal{M}$ induced by the infinitesimal action of $\mathfrak{g}$. 

    \section{Generalized distributions}
    
    \subsection{Generalized distributions over supermanifolds}
   
    \begin{defn}
    	Let $\mathcal{M}=(M,\mathcal{O}_{\mathcal{M}})$ be a supermanifold. A generalized distribution $\mathfrak{D}$ over $\mathcal{M}$ is a $\mathcal{O}_{\mathcal{M}}$-submodule of the tangent sheaf $\mathfrak{X}_{\mathcal{M}}$ of $\mathcal{M}$. 
    \end{defn}

    Let $\mathfrak{D}$ be a generalized distribution over $\mathcal{M}$. The quotient $T_x \mathcal{D}:=\mathfrak{D}_x/\mathfrak{m}_x \mathfrak{D}_x$ is a subspace of the super vector space $T_x \mathcal{M}$, where $\mathfrak{D}_x$ is the stalk of $\mathfrak{D}$ at $x$ and $T_x \mathcal{M}$ is the tangent space of $\mathcal{M}$ at $x$.
    
    \begin{defn}
    	The function 
    	\begin{align*}
    		\mathrm{rank}_{\mathcal{D}}: M &\rightarrow \mathbb{N} \times \mathbb{N} \\
    		x &\mapsto \dim(T_x \mathfrak{D})
    	\end{align*}
        is called the rank of $\mathcal{D}$.
    \end{defn}

    \begin{lem}
    	$\mathrm{rank}_{\mathcal{D}}$ is a lower semicontinuous function on $M$.
    \end{lem}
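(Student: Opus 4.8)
The plan is to reduce the statement to the familiar openness of the maximal-rank locus of a matrix with continuous entries, applied separately to the even and the odd parts, and to read off lower semicontinuity in $\mathbb{N}\times\mathbb{N}$ as lower semicontinuity of both components. First I would fix $x_0 \in M$ and write $\mathrm{rank}_{\mathcal{D}}(x_0) = (p,q)$; by the subspace identification $T_{x_0}\mathfrak{D} \hookrightarrow T_{x_0}\mathcal{M}$ recorded above, $\dim(T_{x_0}\mathfrak{D})$ equals the dimension of the image of the stalk $\mathfrak{D}_{x_0}$ in $T_{x_0}\mathcal{M}$, which therefore is spanned by $p$ even and $q$ odd vectors. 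Since elements of a stalk are germs of sections, I can choose homogeneous sections $X_1,\dots,X_p \in \mathfrak{D}(U)_0$ and $Y_1,\dots,Y_q \in \mathfrak{D}(U)_1$ on one common splitting neighborhood $U \ni x_0$ carrying a coordinate system $(U,u^i,\theta^j)$, whose images under $\mathfrak{D}_{x_0} \to T_{x_0}\mathfrak{D}$ form a homogeneous basis. The crucial observation is that each such section defines, at every $x \in U$, an element of $\mathfrak{D}_x$ and hence a vector of $T_x\mathfrak{D}$, so it suffices to show that these reductions stay linearly independent for $x$ near $x_0$.

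Next I would make the reduction map explicit in the chart. Writing $X_k = \sum_i a_k^i \frac{\partial}{\partial u^i} + \sum_j b_k^j \frac{\partial}{\partial \theta^j}$, parity forces $a_k^i \in \mathcal{O}_{\mathcal{M}}(U)_0$ and $b_k^j \in \mathcal{O}_{\mathcal{M}}(U)_1$, so that the image of $X_k$ in $T_x\mathcal{M} \cong \mathfrak{X}_{\mathcal{M},x}/\mathfrak{m}_x\mathfrak{X}_{\mathcal{M},x}$ is $\sum_i \epsilon_x(a_k^i)\,\frac{\partial}{\partial u^i}\big|_x$, because $\epsilon_x$ annihilates the odd coefficients $b_k^j$. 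Dually, an odd section $Y_l = \sum_i c_l^i \frac{\partial}{\partial u^i} + \sum_j d_l^j\frac{\partial}{\partial \theta^j}$ reduces to $\sum_j \epsilon_x(d_l^j)\,\frac{\partial}{\partial \theta^j}\big|_x$. Thus the even part of the reduced family is governed by the $p\times m$ matrix $A(x) = (\epsilon_x(a_k^i))$ and the odd part by the $q\times n$ matrix $D(x) = (\epsilon_x(d_l^j))$, whose entries are the values at $x$ of the body functions $\epsilon(a_k^i), \epsilon(d_l^j) \in C^\infty(U)$ and hence depend continuously, indeed smoothly, on $x$.

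Finally, by the choice of the $X_k$ and $Y_l$ the matrices $A(x_0)$ and $D(x_0)$ have rank $p$ and $q$, so some $p\times p$ minor of $A$ and some $q\times q$ minor of $D$ are nonzero at $x_0$; being continuous in $x$, they stay nonzero on a neighborhood $U' \subseteq U$, giving $\mathrm{rank}\,A(x) \geq p$ and $\mathrm{rank}\,D(x)\geq q$ there. Consequently the reductions of $X_1,\dots,X_p$ span an even subspace of dimension at least $p$ and those of $Y_1,\dots,Y_q$ an odd subspace of dimension at least $q$ inside $T_x\mathfrak{D}$, so $\mathrm{rank}_{\mathcal{D}}(x) \geq (p,q) = \mathrm{rank}_{\mathcal{D}}(x_0)$ componentwise for all $x \in U'$, which is precisely lower semicontinuity. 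I expect the only genuinely delicate point to be the bookkeeping in the reduction step, namely tracking which coefficients survive under $\epsilon_x$ according to parity and confirming that the reduction of a section of $\mathfrak{D}$ lands in the subspace $T_x\mathfrak{D}$ rather than merely in $T_x\mathcal{M}$, whereas the minor/openness argument itself is routine.
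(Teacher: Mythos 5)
Your proposal is correct and follows essentially the same route as the paper's proof: pick sections of $\mathfrak{D}$ near $x_0$ whose reductions give a basis of $T_{x_0}\mathfrak{D}$, and use continuity to see they remain linearly independent nearby, so the rank can only jump up. The only difference is that you make explicit (via the parity bookkeeping, the body coefficient matrices $A(x)$, $D(x)$, and the nonvanishing-minor argument) the step the paper states without detail, namely that the chosen sections stay linearly independent on a smaller neighborhood.
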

    \begin{proof}
    	Let $a>0$ and $x \in \mathrm{rank}_{\mathcal{D}}^{-1}((a,\infty))$. Let $\{X_i\}$ be vector fields over an open neighborhood $U$ of $x$ such that their tangent vectors at $x$ form a basis for $T_x\mathfrak{D}$. We can find another open neighborhood $V \subset U$ of $x$ such that $\{X_i\}$ are linearly independent on $V$. For any $y \in V$, $\{X_i\}$ induces a set of linearly independent vectors in $T_y \mathfrak{D}$, which implies that $\mathrm{rank}_{\mathcal{D}}(y) \geq \mathrm{rank}_{\mathcal{D}}(x)$. Hence $V \subset \mathrm{rank}_{\mathcal{D}}^{-1}((a,\infty))$.
    \end{proof}

    \begin{defn}
    	A generalized distribution $\mathcal{D}$ is called a distribution 
    	if $\mathrm{rank}_{\mathcal{D}}$ is locally constant.
    \end{defn}
    
    \begin{lem}
    	A generalized distribution $\mathcal{D}$ is a distribution if and only if it is locally free. 
    \end{lem}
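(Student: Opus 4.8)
The plan is to prove the two implications separately; the forward direction is routine, while the converse is a Nakayama-type argument whose only real content is a generation statement.

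\emph{Locally free $\Rightarrow$ distribution.} Suppose that near each $x$ there is an open $U$ and an isomorphism $\mathfrak{D}|_U \cong \mathcal{O}_{\mathcal{M}}|_U^{\,p|q}$ of $\mathcal{O}_{\mathcal{M}}|_U$-modules. Passing to the fibre $\mathfrak{D}_y/\mathfrak{m}_y\mathfrak{D}_y$ is the functor $-\otimes_{\mathcal{O}_{\mathcal{M},y}}\mathbb{R}$, which commutes with finite direct sums, so $T_y\mathfrak{D} \cong (\mathcal{O}_{\mathcal{M},y}/\mathfrak{m}_y)^{p|q} = \mathbb{R}^{p|q}$ for every $y \in U$. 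Hence $\mathrm{rank}_{\mathcal{D}} \equiv (p,q)$ on $U$ and $\mathcal{D}$ is a distribution.

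\emph{Distribution $\Rightarrow$ locally free (setup).} Conversely assume $\mathrm{rank}_{\mathcal{D}} \equiv (p,q)$ near $x$. Choose homogeneous $Z_1,\dots,Z_{p+q} \in \mathfrak{D}(U)$ whose classes form a homogeneous basis of $T_x\mathfrak{D}$. Exactly as in the proof of the preceding lemma, after shrinking $U$ these classes remain linearly independent in $T_y\mathfrak{D}$ for all $y \in U$, and since $\dim T_y\mathfrak{D} = (p,q)$ is constant they form a basis of each $T_y\mathfrak{D}$. Completing $[Z_1]_x,\dots,[Z_{p+q}]_x$ to a homogeneous basis of $T_x\mathcal{M}$ by adjoining suitable coordinate fields $W_1,\dots,W_r$, the supermatrix expressing $(Z_i,W_j)$ in a coordinate frame is invertible at $x$ — equivalently its body is invertible at $x$ — hence invertible on a neighbourhood, so $(Z_i,W_j)$ is a local frame of $\mathfrak{X}_{\mathcal{M}}$ there. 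In particular the $Z_i$ are $\mathcal{O}_{\mathcal{M}}$-linearly independent and generate a free submodule $\mathcal{F} := \mathcal{O}_{\mathcal{M}}\langle Z_1,\dots,Z_{p+q}\rangle \subseteq \mathfrak{D}$.

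\emph{Generation — the main obstacle.} It remains to show $\mathcal{F} = \mathfrak{D}$, i.e. that every local section $X \in \mathfrak{D}$ lies in $\mathcal{F}$. Writing $X = \sum_i f^i Z_i + \sum_j g^j W_j$ in the frame, one must show $g^j = 0$. Evaluating at an arbitrary $y$, the class $[X]_y$ lies in $T_y\mathfrak{D} = \mathrm{span}\{[Z_i]_y\}$, which forces $\epsilon_y(g^j) = 0$ for all $y$, i.e. $g^j \in \mathcal{N} := \ker \epsilon = \langle \theta^1,\dots,\theta^n\rangle$. This is precisely where the super setting departs from the classical one: as the text stresses, a vector field is \emph{not} determined by its tangent vectors over $M$, so the vanishing of all bodies $\epsilon(g^j)$ does not yet give $g^j = 0$. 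The plan to finish is a descending induction along the finite filtration $\mathcal{N} \supset \mathcal{N}^2 \supset \dots \supset \mathcal{N}^{n+1} = 0$: assuming $g^j \in \mathcal{N}^k$, use the $\mathcal{O}_{\mathcal{M}}$-module structure of $\mathfrak{D}$ (multiplying $\tilde{X} := \sum_j g^j W_j \in \mathfrak{D}$ by the odd coordinates) together with the constant-rank hypothesis read off on the infinitesimal thickening $\mathcal{O}_{\mathcal{M}}/\mathcal{N}^{k+1}$ to promote $g^j \in \mathcal{N}^{k+1}$, and conclude after finitely many steps.

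I expect this last induction to be the only genuine difficulty, and to be where any regularity needed on $\mathfrak{D}$ beyond its being an abstract submodule must enter — for instance local finite generation, or that the map $T_x\mathfrak{D} \to T_x\mathcal{M}$ be injective so that $\mathfrak{D}$ is a direct summand at the body level. The naive transcription of the smooth proof, namely ``$X$ and $\sum_i f^i Z_i$ have the same tangent vector at every point, hence agree'', breaks down verbatim, and controlling these nilpotent (``soul'') directions is exactly the work the argument must do; a submodule of the form $\mathcal{N}\cdot\tfrac{\partial}{\partial \theta^1}$ already shows that bodies alone cannot detect non-freeness, so this step is the crux on which the statement turns. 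Granting the inductive step one gets $\mathcal{F} = \mathfrak{D}$ over $U$, so $\mathfrak{D}$ is free there, completing the proof.
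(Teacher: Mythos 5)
Your proposal is not a complete proof, and you say so yourself: all of the real content is in the generation step, which you only sketch and then ``grant''. But your diagnosis of the obstruction is exactly right, and in fact no argument can close that gap at this level of generality, because with the paper's stated definitions (an arbitrary $\mathcal{O}_{\mathcal{M}}$-submodule, rank the dimension of $T_x\mathfrak{D}=\mathfrak{D}_x/\mathfrak{m}_x\mathfrak{D}_x$) the lemma is false, and the example you mention in passing is already a counterexample. On $\mathbb{R}^{m|2}$ take $\mathfrak{D}=\mathcal{N}\cdot\partial/\partial\theta^1$, where $\mathcal{N}=\ker\epsilon$ is the ideal generated by the odd coordinates. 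At every point $y$ the fiber $T_y\mathfrak{D}$ is spanned by the classes of $\theta^1\,\partial/\partial\theta^1$ and $\theta^2\,\partial/\partial\theta^1$, so the rank is constant, equal to $(2,0)$ (under the alternative reading of rank as the dimension of the image of $T_y\mathfrak{D}$ in $T_y\mathcal{M}$ it is constant, equal to $(0,0)$); yet $\mathfrak{D}$ is nowhere locally free, since $\theta^1\theta^2$ annihilates every section of $\mathfrak{D}$ but annihilates no basis element of a nonzero free module. Note also that this $\mathfrak{D}$ is finitely generated, so of the two rescuing hypotheses you float at the end, local finite generation is \emph{not} sufficient, while fiber injectivity is.

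The comparison with the paper's own proof is then instructive: the paper completes the argument only by making precisely the slip you refused to make. Having chosen a nonzero $Y\in\mathfrak{D}$ vanishing at $x$, it derives its contradiction with constancy of the rank ``at $y$ where $Y$ is non-vanishing''; in the super setting no such $y$ need exist, since a nonzero section such as $\theta^1\,\partial/\partial\theta^1$ has vanishing tangent vector at every point --- the very phenomenon the paper itself stresses, and which your write-up keeps in view. What both arguments actually require is the assertion, stated without proof right after the definition of $T_x\mathcal{D}$, that $T_x\mathfrak{D}$ is a \emph{subspace} of $T_x\mathcal{M}$, i.e.\ that $\mathfrak{D}_x\cap\mathfrak{m}_x\mathfrak{X}_{\mathcal{M},x}=\mathfrak{m}_x\mathfrak{D}_x$ for all $x$; the counterexample above violates exactly this. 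If that is added as a hypothesis on $\mathfrak{D}$, your induction closes just as you predicted: $\tilde X=\sum_j g^jW_j$ has all frame coefficients in $\mathfrak{m}_y$, hence $\tilde X\in\mathfrak{D}_y\cap\mathfrak{m}_y\mathfrak{X}_{\mathcal{M},y}=\mathfrak{m}_y\mathfrak{D}_y$; writing $\tilde X=\sum_a f_aD_a$ with $f_a\in\mathfrak{m}_y$, $D_a\in\mathfrak{D}_y$, and applying the inductive hypothesis (that the $W$-coefficients of all local sections of $\mathfrak{D}$ lie in $\mathcal{N}^k$) to each $D_a$, one gets $g^j\in\mathfrak{m}_y\mathcal{N}^k$, whose degree-$k$ coefficient functions vanish at $y$; since $y$ is arbitrary, $g^j\in\mathcal{N}^{k+1}$, and after finitely many steps $g^j=0$, giving $\mathfrak{D}=\mathcal{F}$ locally. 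So your proposal, though incomplete, isolates both the hypothesis missing from the statement and the mechanism that proves the corrected statement, whereas the paper's proof is short only because it silently assumes that hypothesis.
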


    \begin{proof}
    	The ``if'' direction is trivial. To prove the other direction, let again $x \in M$ and $\{X_i\}$ be vector fields over an open neighborhood $U$ of $x$ such that their tangent vectors at $x$ form a basis for $T_x\mathfrak{D}$. We need to show that the stalks of $\{X_i\}$ generate $\mathfrak{D}_x$. If this is not the case, one can find a nonzero vector field $Y$ vanishing at $x$ and another open neighborhood $V \subset U$ of $x$ such that $\{X_i\}$ and $Y$ are linearly independent on $V$. But this cannot be because the rank of $\mathcal{D}$ at $y$ where $Y$ is non-vanishing would be then strictly lager than the rank of $\mathcal{D}$ at $x$. 
    \end{proof}

    \begin{defn}
    	Let $\mathcal{M}$ be a supermanifold and $\mathcal{D}$ be a generalized distribution over $\mathcal{M}$. Let $\phi: \mathcal{N} \rightarrow \mathcal{M}$ be an injective immersion, i.e., an immersed submanifold of $\mathcal{M}$. $\mathcal{N}$ is called an integral manifold of $\mathcal{D}$ if $d\phi_x(T_x \mathcal{N})=T_{\tilde{\phi}(x)} \mathcal{D}$ for all $x \in N$, where $d\phi_x$ is the tangent map of $\phi$ at $x$ and $\tilde{\phi}: N  \rightarrow M$ is the underlying smooth map of $\phi$. The generalized distribution $\mathcal{D}$ is called integrable if every point of $M$ is contained in a integral manifold of $\mathcal{D}$.
    \end{defn}

    \begin{defn}
    	Let $\mathcal{M}$ be a supermanifold and $\mathcal{D}$ be a generalized distribution over $\mathcal{M}$. $\mathcal{D}$ is called involutive if its sections are closed under the super Lie bracket induced from the bracket of vector fields over $\mathcal{M}$.
    \end{defn}

    In the classical case, the integrable condition and the involutive condition are equivalent for a distribution, known as the Frobenius theorem \cite{Sussmann-1973}. However, there is no longer such equivalence in the super setting because a vector field over a supermanifold is not uniquely determined by its tangent vectors. Therefore, there might be several different distributions sharing the same integral manifolds, and not all of them are involutive. However, the other direction still holds true.
    
    \begin{thm}\cite{Carmeli11}*{Theorem 6.2.1}
    	Every involutive distribution $\mathcal{D}$ over a supermanifold $\mathcal{M}$ is integrable. Moreover, for each point of $M$, there exists a unique maximal integral manifold of $\mathcal{D}$.
    \end{thm}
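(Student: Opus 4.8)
The plan is to deduce the global statement from a local normal-form result — a super analogue of the Frobenius flat-chart theorem — and then to assemble the local pieces into leaves. Concretely, I would first show that around every point $x \in M$ there is a coordinate system $(U, u^i, \theta^j)$ in which $\mathcal{D}|_U$ is the free $\mathcal{O}_{\mathcal{M}}(U)$-submodule generated by $\frac{\partial}{\partial u^1}, \ldots, \frac{\partial}{\partial u^p}, \frac{\partial}{\partial \theta^1}, \ldots, \frac{\partial}{\partial \theta^q}$, where $p|q = \mathrm{rank}_{\mathcal{D}}$ is the (locally constant) rank of the distribution. Granting such a flat chart, the slice obtained by freezing $u^{p+1}, \ldots, u^m$ to constants and setting $\theta^{q+1} = \cdots = \theta^n = 0$ is an integral manifold through $x$, which already yields integrability; the maximal integral manifold and its uniqueness then follow exactly as in the ordinary case, by declaring two points equivalent when joined by a chain of such slices and transporting the resulting leaf structure to the supermanifold setting.

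For the local flattening I would argue by induction on the rank $p + q$, straightening one homogeneous generator at a time. Fix a homogeneous local frame $Z_1, \ldots, Z_p$ (even), $W_1, \ldots, W_q$ (odd) for $\mathcal{D}$, chosen so that the induced vectors in $T_x \mathfrak{D}$ form a homogeneous basis. If $p \geq 1$, the reduced field of $Z_1$ on $M$ is nonvanishing at $x$, so the super rectification theorem for a single even vector field — itself a consequence of the existence of flows for even vector fields on supermanifolds — provides a coordinate $u^1$ with $Z_1 = \frac{\partial}{\partial u^1}$. Subtracting the $\frac{\partial}{\partial u^1}$-components of the remaining generators (which keeps them in $\mathcal{D}$, since $\frac{\partial}{\partial u^1} = Z_1 \in \mathcal{D}$) makes them tangent to the codimension-$1|0$ sub-supermanifold $\mathcal{M}' = \{u^1 = 0\}$; involutivity of $\mathcal{D}$ descends to these restrictions, so they span an involutive distribution $\mathcal{D}'$ of rank $(p-1)|q$ on $\mathcal{M}'$. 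The inductive hypothesis flattens $\mathcal{D}'$, and transporting the resulting coordinates off $\mathcal{M}'$ by the flow of $\frac{\partial}{\partial u^1}$ flattens $\mathcal{D}$ near $x$.

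The genuinely super part of the argument, and the step I expect to be the main obstacle, is the case $p = 0$, $q \geq 1$, where one must straighten an odd generator $W = W_1$. Here the naive analogue fails because $W$ need not square to zero: the even field $W^2 = \tfrac12 [W, W]$ again lies in $\mathcal{D}$ by involutivity, yet the frame has no even member, so $W^2 = \sum_k g^k W_k$ with the coefficients $g^k$ odd — hence nilpotent — functions. I would exploit precisely this nilpotence: since $W^2$ has odd coefficients it is a ``small'' even field lying in $\mathfrak{m}_x \mathfrak{D}_x$ and contributing nothing to the fiber $T_x\mathfrak{D}$, so after adjusting $W$ within $\mathcal{D}$ its square can be rectified and the straightening theorem for an odd vector field yields a coordinate $\theta^1$ with $W = \frac{\partial}{\partial \theta^1}$. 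Passing to $\{\theta^1 = 0\}$ drops the rank by $0|1$ and closes the induction. Keeping the frame involutive after each reduction, and verifying that the even fields arising as squares of odd generators never obstruct the rectification, is where the real care is needed.

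Finally, the maximality and uniqueness assertion I would settle on the level of the reduced manifold $M$ together with the sheaf data. The flat charts endow $M$ with an ordinary involutive distribution of even rank $p$, whose leaves are the bodies of the slices above; the classical global Frobenius construction produces through each point a unique maximal connected integral manifold of this body, and the $q$ odd flat coordinates along such a leaf assemble the flat charts into a maximal integral sub-supermanifold of dimension $p|q$. Uniqueness follows because any integral manifold of $\mathcal{D}$ has, by definition, tangent spaces equal to $T_{\tilde\phi(x)}\mathcal{D}$ and therefore coincides locally with one of the flat slices, so a standard connectedness argument identifies it with the maximal leaf just constructed.
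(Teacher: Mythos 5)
First, a point of comparison: the paper does not prove this theorem at all — it is imported verbatim from Carmeli--Caston--Fioresi (Theorem 6.2.1), so there is no internal proof to measure you against. Your proposal is an attempt at the proof that the cited reference actually carries out, and your overall architecture (local flat chart by induction on the rank, even directions straightened by flows and classical Frobenius on the body, odd directions handled separately, then globalization by chaining slices) is indeed the standard route. So the question is only whether your sketch closes, and it does not: there is a genuine gap exactly at the step you yourself flag as "where the real care is needed."

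The gap is the rectification of an odd generator. The straightening theorem for an odd vector field $W$ — producing a coordinate with $W = \partial/\partial\theta^1$ — requires the exact identity $[W,W]=0$, not merely that $W^2=\tfrac12[W,W]$ has nilpotent coefficients or that it dies in the fiber $T_x\mathfrak{D}$; indeed, if $W=\partial/\partial\theta^1$ in some coordinates then $[W,W]=0$ identically, so this is a necessary condition, and your appeal to "smallness" of $W^2$ cannot substitute for it. What involutivity gives you in the case $p=0$ is $[W,W]=\sum_k g^k W_k$ with odd coefficients $g^k$, and the actual content of the super Frobenius theorem is to show that the module $\mathcal{D}$ admits a \emph{different} homogeneous generator with identically vanishing self-bracket. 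Already in rank $0|1$ this means finding an invertible even function $f$ with $[fW,fW]=\bigl(2fW(f)+f^2g\bigr)W=0$, i.e.\ solving the equation $2W(f)=-fg$; its solvability is not formal — it uses the constraint $W(g)W=[W,[W,W]]=0$ coming from the graded Jacobi identity, together with an induction on the nilpotence order of the odd coordinates (or, equivalently, integration of an $\mathbb{R}^{0|1}$-action). Your phrase "after adjusting $W$ within $\mathcal{D}$ its square can be rectified" assumes precisely this conclusion. Until that step is carried out (and iterated consistently for several odd generators, keeping the modified frame inside $\mathcal{D}$), the induction does not close, and the local flat chart — on which both the integrability claim and your leaf-gluing globalization rest — is not established. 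The final assembly of odd coordinates along a body leaf into a maximal integral sub-supermanifold is also only gestured at (one needs slice-to-slice transition functions for the structure sheaf), but that part is routine once the flat charts exist; the odd rectification is the real missing idea.
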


    \begin{rmk}
    	Luckily, we are only interested in involutive generalized distributions induced by a Lie supergroup action. Such generalized distributions are automatically integrable.
    \end{rmk}
    
    \begin{defn}
    	Let $\mathcal{D}$ be a generalized distribution over $\mathcal{M}$. Let $\mathcal{D}'\subset \mathcal{D}$ be another generalized distribution over $\mathcal{M}$. A point $p \in M$ is called $\mathcal{D}'$-regular (with respect to $\mathcal{D}$) if $\mathrm{rank}_{\mathcal{D}'}(p)=\mathrm{rank}_{\mathcal{D}}(p)$.
    \end{defn}

    \begin{cor}
    	The set of $\mathcal{D}'$-regular points is open in $M$.
    \end{cor}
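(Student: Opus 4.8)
The plan is to show that every $\mathcal{D}'$-regular point $p$ possesses an entire neighborhood of $\mathcal{D}'$-regular points, by exhibiting sections of $\mathcal{D}'$ that simultaneously pin down the ranks of both $\mathcal{D}'$ and $\mathcal{D}$ near $p$. Write $r := \mathrm{rank}_{\mathcal{D}}(p) = \mathrm{rank}_{\mathcal{D}'}(p)$. First I would choose finitely many homogeneous vector fields $X_1,\dots,X_N \in \mathfrak{D}'(U)$ on a neighborhood $U$ of $p$ whose induced classes in $T_p\mathcal{D}$ form a homogeneous basis. This is possible precisely because $p$ is $\mathcal{D}'$-regular: the subspaces $T_p\mathcal{D}' \subseteq T_p\mathcal{D}$ of $T_p\mathcal{M}$ have equal dimension and hence coincide, so a basis of $T_p\mathcal{D}$ can be realized by sections lying inside the smaller sheaf $\mathfrak{D}'$.

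The lower bound is immediate from the semicontinuity argument already used for $\mathrm{rank}_{\mathcal{D}}$: since the $X_i$ are linearly independent at $p$, they stay linearly independent on a smaller neighborhood $V \subseteq U$, so for every $y \in V$ they induce linearly independent classes in $T_y\mathcal{D}'$, giving $\mathrm{rank}_{\mathcal{D}'}(y) \ge r$ componentwise in $\mathbb{N}\times\mathbb{N}$. For the matching upper bound on $\mathrm{rank}_{\mathcal{D}}$ I would pass from the fibre at $p$ to the stalk near $p$: because the classes of $X_1,\dots,X_N$ generate $\mathfrak{D}_p/\mathfrak{m}_p\mathfrak{D}_p$, their germs generate the stalk $\mathfrak{D}_p$, and generation of a finitely generated submodule of the locally free sheaf $\mathfrak{X}_{\mathcal{M}}$ is an open condition, so after shrinking $V$ the germs of the $X_i$ generate $\mathfrak{D}_y$ for all $y \in V$.

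Since each $X_i$ lies in $\mathfrak{D}'$, this forces $\mathfrak{D}_y \subseteq \mathfrak{D}'_y \subseteq \mathfrak{D}_y$, hence $\mathfrak{D}_y = \mathfrak{D}'_y$, and therefore $T_y\mathcal{D} = T_y\mathcal{D}'$ with $\mathrm{rank}_{\mathcal{D}}(y) = \mathrm{rank}_{\mathcal{D}'}(y)$ throughout $V$. Thus $V$ is an open neighborhood of $p$ consisting of $\mathcal{D}'$-regular points, which proves the corollary.

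The main obstacle is exactly this last step. Lower semicontinuity of the two ranks together with $\mathrm{rank}_{\mathcal{D}'} \le \mathrm{rank}_{\mathcal{D}}$ is by itself \emph{not} sufficient to conclude that the equality locus is open, since both ranks may only jump upward; the argument must genuinely use the module inclusion $\mathcal{D}' \subseteq \mathcal{D}$ to bound $\mathrm{rank}_{\mathcal{D}}$ from above near $p$. Concretely one needs the Nakayama-type implication ``generating the fibre at $p$ implies the germs generate the stalk at $p$'' together with openness of the generating locus, and both of these rest on $\mathfrak{D}$ being finitely generated. This is automatic in the situation of interest, namely distributions cut out by the infinitesimal action of a finite-dimensional structure Lie superalgebra, whose fundamental vector fields furnish global generators; over a general smooth structure sheaf the finite-generation point would require additional care, so I would either restrict to this finitely generated setting or record it as a standing hypothesis.
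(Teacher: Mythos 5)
Your opening diagnosis is correct and important: lower semicontinuity of the two ranks is \emph{not} enough to make the equality locus open, and that lemma is in fact all the paper offers here (the corollary is stated without proof). However, your repair does not close the gap, because the Nakayama step conflates two different fibres. Nakayama's lemma needs the classes of $X_1,\dots,X_N$ to generate the \emph{algebraic} fibre $\mathfrak{D}_p/\mathfrak{m}_p\mathfrak{D}_p$; what $\mathcal{D}'$-regularity gives you (and what your first step uses, via ``equal-dimensional subspaces of $T_p\mathcal{M}$ coincide'') is only that their values span the \emph{image} of that fibre in $T_p\mathcal{M}$. The natural map $\mathfrak{D}_p/\mathfrak{m}_p\mathfrak{D}_p \rightarrow T_p\mathcal{M}$ is in general not injective, so spanning the image does not imply generating the fibre, hence does not imply generating the stalk. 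Concretely, take $\mathcal{M}=\mathbb{R}^2$, let $\mathfrak{D}$ be the $C^{\infty}$-span of $\partial_x$ and $y\partial_y$, and $\mathfrak{D}'$ the $C^{\infty}$-span of $\partial_x$. At the origin $p$ both tangent spaces equal $\mathbb{R}\partial_x$, so $p$ is $\mathcal{D}'$-regular and $X_1=\partial_x \in \mathfrak{D}'(\mathbb{R}^2)$ realizes a basis of $T_p\mathcal{D}$; yet its germ does not generate $\mathfrak{D}_p$ (the class of $y\partial_y$ in the algebraic fibre is nonzero even though its value at $p$ vanishes), and every neighborhood of $p$ contains points off the $x$-axis where $\mathrm{rank}_{\mathcal{D}}=2>1=\mathrm{rank}_{\mathcal{D}'}$: the regular set is the closed $x$-axis, not open. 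Note that this is exactly an orbit-distribution pair, with $\mathfrak{g}=\mathrm{span}_{\mathbb{R}}\{\partial_x,\,y\partial_y\}$ a two-dimensional abelian Lie algebra acting on $\mathbb{R}^2$ and $\mathfrak{m}=\mathbb{R}\partial_x$, so your fallback hypothesis (finitely generated orbit distributions of finite-dimensional Lie superalgebras) does not rescue the argument.

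In fact no argument can close this gap, because under the geometric reading of rank that legitimizes your first step (and which the paper itself uses in proving the semicontinuity lemma and in computing $T_x\mathcal{O}$), the statement is false even in the paper's own application: for $J=\mathrm{UJosp}(2,0)=\mathcal{B}_{sa}(\mathbb{C}^2)$, the paper's later theorem characterizes $\mathfrak{m}_J$-regular points by the condition that nonzero spectral coefficients never sum to zero; thus $\xi=0$ is regular (vacuously), while the elements with spectral coefficients $(\epsilon,-\epsilon)$ are not regular and converge to $0$ as $\epsilon \rightarrow 0$. What \emph{is} true, and follows at once from the lemma, is the corollary under the extra hypothesis that $\mathrm{rank}_{\mathcal{D}}$ is locally constant (e.g.\ when $\mathcal{D}$ is a distribution in the paper's sense, or after restricting to a single orbit, where the rank of $\mathcal{D}^{\mathfrak{g}(A)}$ is constant): for $y$ near a regular point $p$ one then has $\mathrm{rank}_{\mathcal{D}'}(y)\geq\mathrm{rank}_{\mathcal{D}'}(p)=\mathrm{rank}_{\mathcal{D}}(p)=\mathrm{rank}_{\mathcal{D}}(y)\geq\mathrm{rank}_{\mathcal{D}'}(y)$, forcing equality. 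I would recommend proving and using the statement in that restricted form rather than attempting a Nakayama-type argument.
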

     
    \begin{defn}
    	Let $\mathfrak{g}$ be a Lie superalgebra. Let $\lambda: \mathfrak{g} \rightarrow \mathfrak{X}_{\mathcal{M}}(M)$ be an infinitesimal action of $\mathfrak{g}$ on $\mathcal{M}$. The distribution $\mathfrak{D}^{\mathfrak{g}}$ defined by $\mathfrak{D}^{\mathfrak{g}}(U):= \mathrm{span}\{\lambda(X)|_U: X \in \mathfrak{g}\}$ is called the infinitesimal orbit distribution of $\mathfrak{g}$. Let $\mathfrak{m}$ be a subspace of $\mathfrak{g}$, a point $p \in M$ is called $\mathfrak{m}$-regular if it is $\mathcal{D}^{\mathfrak{m}}$-regular, where $\mathfrak{D}^{\mathfrak{m}} := \mathrm{span}\{\lambda(X): X \in \mathfrak{m}\}$.
    \end{defn}

    If the infinitesimal action of $\mathfrak{g}$ is induced by a global action of a Lie supergroup $\mathcal{G}$, then $\mathcal{D}^{\mathfrak{g}}$ is integrable and its maximal integrable manifolds are in bijection with the $\mathcal{G}$-orbits. 
    \begin{defn}
    	A $\mathcal{G}$-orbit is said to be $\mathfrak{m}$-regular if the underlying manifold of it consists only of $\mathfrak{m}$-regular points. 
    \end{defn}

    \subsection{Canonical generalized distributions on the duals of superalgebras}
    
    The dual $A^*$ of a super vector space $A=A_0\oplus A_1$ can be canonically viewed as a supermanifold $(A_0^*, \mathcal{O}_{A^*})$ where $\mathcal{O}_{A^*}$ is defined by $\mathcal{O}_{A^*}(U) = C^{\infty}(U) \otimes \Lambda(A_1)$. The body map of $A^*$ is given by the canonical projection $\epsilon: \mathcal{O}_{A^*}(U) \rightarrow C^{\infty}(U) \otimes \Lambda^0(A_1) \cong C^{\infty}(U)$. In our case, we can choose a coordinate system $(U, u_i,\theta_j)$ where $(u_i)$ is a basis of $A_0$ and $(\theta_j)$ is a basis of $A_1$. We call such $(U, u_i,\theta_j)$ a special coordinate system of $A^*$.
    
    Let $A$ be a superalgebra. Let $x_i$ be a basis of $A$ such that $x_i=u_i$ for $i=1,\dots,m$, and $x_{m+j}=\theta_j$ for $j=1,\dots,n$. The commutative superalgebra $\mathcal{O}_{A^*}(U)$ can be made into an $\mathcal{O}_{A^*}(U)$-superalgebra by considering the following product
    \begin{align}\label{dualprod}
    	\{f,g\}=\sum_{i,j} (-1)^{|x_j|(|f|+|x_i|)}(x_ix_j)(\frac{\partial f}{\partial x_i} \frac{\partial g}{\partial x_j})
    \end{align}
    for $f, g \in \mathcal{O}_{A^*}(U)$. The product inside the second round bracket of (\ref{dualprod}) should be understood as the commutative product of $\mathcal{O}_{A^*}(U)$, while the product inside the first round bracket should be understood as a product of $A$. Note that an element $a$ in $A$ can be viewed canonically as an element of $\mathcal{O}_{A^*}(U)$. The product between the two round brackets should be then understood as the commutative product of $\mathcal{O}_{A^*}(U)$.
    
    Evidently, the definition of (\ref{dualprod}) does not depend on the choice of special coordinate system of $U$. One can easily verify that the canonical injection $A \hookrightarrow \mathcal{O}_{A^*}(U)$ is an algebraic monomorphism.  Moreover, one can show the followings
    \begin{prop}
    	The $\mathcal{O}_{A^*}(U)$-superalgebra $(\mathcal{O}_{A^*}(U),\{\cdot,\cdot\})$ is commutative iff $A$ is commutative.
    \end{prop}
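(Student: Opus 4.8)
The plan is to establish both implications directly from the defining formula \eqref{dualprod}, the only substantive content being a careful Koszul sign count.

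For the ``only if'' direction I would use the fact, already recorded above, that the canonical injection $A \hookrightarrow (\mathcal{O}_{A^*}(U),\{\cdot,\cdot\})$ is an algebraic monomorphism. Concretely, feeding $f=x_k$, $g=x_l$ into \eqref{dualprod} collapses the double sum to the single term $i=k$, $j=l$, whose prefactor is $(-1)^{|x_l|\cdot 2|x_k|}=1$, so that $\{x_k,x_l\}=x_kx_l$, the product of $A$. Hence, if $(\mathcal{O}_{A^*}(U),\{\cdot,\cdot\})$ is commutative, then restricting the commutativity relation to the basis elements $x_k$ yields $x_kx_l=(-1)^{|x_k||x_l|}x_lx_k$ for all $k,l$, i.e. $A$ is commutative. (Equivalently, a subalgebra of a commutative superalgebra is commutative.)

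For the ``if'' direction, assume $A$ is commutative and compute $\{g,f\}$ from \eqref{dualprod}. First I would relabel the summation indices $i\leftrightarrow j$, turning the structure factor into $x_jx_i$ and the derivative factor into $\frac{\partial g}{\partial x_j}\frac{\partial f}{\partial x_i}$, so that the generic term carries the prefactor $(-1)^{|x_i|(|g|+|x_j|)}$. Then two sign-producing moves bring the expression back to the shape of $\{f,g\}$: commutativity of $A$ rewrites $x_jx_i=(-1)^{|x_i||x_j|}x_ix_j$, and commutativity of the product of $\mathcal{O}_{A^*}(U)$ (which is a commutative superalgebra) rewrites $\frac{\partial g}{\partial x_j}\frac{\partial f}{\partial x_i}=(-1)^{(|g|+|x_j|)(|f|+|x_i|)}\frac{\partial f}{\partial x_i}\frac{\partial g}{\partial x_j}$, using that $\frac{\partial}{\partial x_i}$ carries parity $|x_i|$ so that $\frac{\partial g}{\partial x_j}$ has parity $|g|+|x_j|$. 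After these substitutions each term has exactly the structure factor $x_ix_j$ and derivative factor $\frac{\partial f}{\partial x_i}\frac{\partial g}{\partial x_j}$ of the corresponding term of $\{f,g\}$, so it suffices to compare the scalar signs term by term.

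The main (and essentially only) obstacle is the bookkeeping that checks these signs agree. The $|x_i||x_j|$ produced by the commutativity of $A$ combines with the $|x_i||x_j|$ already present in $(-1)^{|x_i|(|g|+|x_j|)}$ into the even contribution $2|x_i||x_j|$, so the accumulated exponent of the generic term of $\{g,f\}$ reduces modulo $2$ to $|x_i||g|+(|g|+|x_j|)(|f|+|x_i|)$; expanding the product and cancelling the even contribution $2|x_i||g|$ leaves $|f||g|+|x_j||f|+|x_i||x_j|$. Multiplying by $(-1)^{|f||g|}$ then cancels the $|f||g|$ term and leaves exactly the exponent $|x_j|(|f|+|x_i|)$ appearing in $\{f,g\}$. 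This shows $\{f,g\}=(-1)^{|f||g|}\{g,f\}$ term by term, completing the equivalence.
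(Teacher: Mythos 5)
Your proposal is correct and follows essentially the same route as the paper: the ``if'' direction is the identical computation (relabel $i\leftrightarrow j$, apply supercommutativity of $\mathcal{O}_{A^*}(U)$ to the derivative factors and commutativity of $A$ to the structure factors, then track the Koszul signs, which you do accurately), while the ``only if'' direction invokes the algebraic monomorphism $A \hookrightarrow \mathcal{O}_{A^*}(U)$, exactly the fact the paper records just before the proposition and uses implicitly when it says only the ``$\Leftarrow$'' direction remains. Your explicit verification that $\{x_k,x_l\}=x_kx_l$ is a welcome bit of added detail but not a different method.
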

    \begin{proof}
    	It remains to show the "$\Leftarrow$" direction.
    	\begin{align*}
    		\{g,f\}&=\sum_{i,j} (-1)^{|x_j|(|g|+|x_i|)}(x_ix_j)(\frac{\partial g}{\partial x_i} \frac{\partial f}{\partial x_j})\\
    		&=\sum_{i,j} (-1)^{|x_i|(|g|+|x_j|)}(-1)^{(|f|+|x_i|)(|g|+|x_j|)}(x_jx_i)(\frac{\partial f}{\partial x_i} \frac{\partial g}{\partial x_j})\\
    		&=\sum_{i,j} (-1)^{|f|(|g|+|x_j|)}(-1)^{|x_i||x_j|}(x_ix_j)(\frac{\partial f}{\partial x_i} \frac{\partial g}{\partial x_j})\\
    		&=(-1)^{|f||g|}\{f,g\}.
    	\end{align*}
    \end{proof}
    
    \begin{prop}
    	$\{f,\cdot\}$ is a derivation of $\mathcal{O}_{A^*}(U)$.
    \end{prop}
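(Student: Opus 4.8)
The plan is to show that $\{f,\cdot\}$ is a superderivation of $\mathcal{O}_{A^*}(U)$ of parity $|f|$, i.e.\ that
\[
\{f,gh\}=\{f,g\}\,h+(-1)^{|f||g|}\,g\,\{f,h\}
\]
for homogeneous $f,g,h\in\mathcal{O}_{A^*}(U)$, where the outer products are the commutative product of $\mathcal{O}_{A^*}(U)$; this is exactly the Leibniz rule \eqref{supder} for an operator of parity $|f|$. A preliminary parity count confirms that $\{f,g\}$ is homogeneous of parity $|f|+|g|$: in \eqref{dualprod} the factor $x_ix_j$ has parity $|x_i|+|x_j|$, while $\partial f/\partial x_i$ and $\partial g/\partial x_j$ have parities $|f|+|x_i|$ and $|g|+|x_j|$, so the two copies of each of $|x_i|,|x_j|$ cancel modulo $2$. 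Hence $\{f,\cdot\}$ indeed has parity $|f|$, and the sign $(-1)^{|f||g|}$ above is the prescribed one.

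First I would insert \eqref{dualprod} into $\{f,gh\}$ and use that each coordinate vector field $\partial/\partial x_j$ is a derivation of $\mathcal{O}_{A^*}(U)$, so that by \eqref{supder},
\[
\frac{\partial(gh)}{\partial x_j}=\frac{\partial g}{\partial x_j}\,h+(-1)^{|x_j||g|}\,g\,\frac{\partial h}{\partial x_j}.
\]
Substituting and distributing splits $\{f,gh\}$ into two sums $S_1+S_2$. The term $S_1$, arising from $\frac{\partial g}{\partial x_j}h$, already has $h$ standing on the far right, so by associativity and commutativity of the product of $\mathcal{O}_{A^*}(U)$ it equals $\{f,g\}\,h$ verbatim, with no extra sign.

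The heart of the computation is the term $S_2$ coming from $(-1)^{|x_j||g|}g\,\frac{\partial h}{\partial x_j}$, which I would massage into $(-1)^{|f||g|}g\,\{f,h\}$. Here $g$ sits between $\frac{\partial f}{\partial x_i}$ and $\frac{\partial h}{\partial x_j}$ and must be commuted to the front past the factor $(x_ix_j)\frac{\partial f}{\partial x_i}$, whose parity is $(|x_i|+|x_j|)+(|f|+|x_i|)\equiv|f|+|x_j|\pmod 2$. Supercommutativity then contributes a Koszul sign $(-1)^{|g|(|f|+|x_j|)}$. Collecting this with the sign $(-1)^{|x_j||g|}$ from the Leibniz rule and the sign $(-1)^{|x_j|(|f|+|x_i|)}$ already present in \eqref{dualprod}, the contributions $|x_j||g|$ and $|g||x_j|$ cancel and the exponent reduces to $|x_j|(|f|+|x_i|)+|f||g|$. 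The surviving $|f||g|$ factors out as $(-1)^{|f||g|}$, while the remaining sign is precisely the one appearing in $\{f,h\}$. Thus $S_2=(-1)^{|f||g|}g\,\{f,h\}$, and adding $S_1$ finishes the argument.

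The only genuine obstacle is disciplined sign bookkeeping: the identity is otherwise a direct consequence of the Leibniz rule for $\partial/\partial x_j$ together with supercommutativity. I would therefore keep every element homogeneous throughout and verify the single cancellation $|x_j||g|+|g||x_j|\equiv 0\pmod 2$ carefully, since this is where an error would most easily creep in.
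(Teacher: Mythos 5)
Your proof is correct and follows essentially the same route as the paper's: expand $\{f,gh\}$ via \eqref{dualprod}, apply the Leibniz rule for $\partial/\partial x_j$, read off the first term as $\{f,g\}h$, and commute $g$ past $(x_ix_j)\frac{\partial f}{\partial x_i}$ so that the Koszul sign $(-1)^{|g|(|f|+|x_j|)}$ combines with $(-1)^{|x_j||g|}$ to yield $(-1)^{|f||g|}$. Your sign bookkeeping, including the cancellation of the $|x_j||g|$ contributions, matches the paper's computation exactly.
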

    \begin{proof}
    	This follows again from direct computations.
    	\begin{align*}
    		\{f,gh\}&=\sum_{i,j}(-1)^{|x_j|(|f|+|x_i|)}(x_ix_j) (\frac{\partial f}{\partial x_i} \frac{\partial}{\partial x_j}(gh))\\
    		&=\sum_{i,j}(-1)^{|x_j|(|f|+|x_i|)}(x_ix_j)(\frac{\partial f}{\partial x_i} (\frac{\partial g}{\partial x_j}h + (-1)^{|x_j||g|}g\frac{\partial h}{\partial x_j} )) \\
    		&=\{f,g\}h+ (-1)^{|x_j||g|}(-1)^{((|f|+|x_i|)+|x_i|+|x_j|)|g|}g \{f,h\}\\
    		&=\{f,g\}h+(-1)^{|f||g|}g \{f,h\}.
    	\end{align*}
    \end{proof}
    
    $\{f,\cdot\}$ can be then viewed as a vector field over $U$, denoted by $X_f$. It is easy to check that 
    \begin{align*}
    	X_{x_i} = \sum_j (x_ix_j) \frac{\partial}{\partial x_j},
    \end{align*}
    and also that 
    \begin{align}\label{xf}
    	X_{f} = \sum_i (-1)^{|x_i|(|f|+|x_i|)}\frac{\partial f}{\partial x_i}X_{x_i}.
    \end{align}
    Let $\mathrm{Ann}(A)=\{a|ab=0~\mathrm{for}~\mathrm{all}~b \in A\}$ denote the annihilator of $A$.
    \begin{prop}\label{coreucl}
    	If $\mathrm{Ann}(A)=0$, then the kernel of $f \mapsto X_f$ consists of constant functions.
    \end{prop}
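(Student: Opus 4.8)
The plan is to prove the two inclusions of ``kernel $=$ constants'' separately. That constant functions lie in the kernel is immediate from \eqref{xf}: if $f$ is constant then $\partial f/\partial x_i = 0$ for every $i$, so $X_f = 0$. The content is therefore the reverse inclusion, and I would organize it around the observation that $X_f$ is a vector field, hence determined by its action on the coordinate functions $x_1,\dots,x_{m+n}$. Concretely, $X_f = 0$ if and only if $\{f,x_j\} = X_f(x_j) = 0$ for all $j$, so it suffices to analyze these finitely many equations.

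First I would compute $\{f,x_j\}$ from \eqref{dualprod}. Using $\partial x_j/\partial x_l = \delta_{jl}$ one obtains $\{f,x_j\} = \sum_i (-1)^{|x_j|(|f|+|x_i|)}(x_i x_j)\,\partial f/\partial x_i$, and after moving the factor $\partial f/\partial x_i$ to the left through $x_i x_j$ via supercommutativity of $\mathcal{O}_{A^*}(U)$ and collecting the Koszul signs, the $j$-dependence of the sign cancels and this simplifies to $\{f,x_j\} = \sum_i (-1)^{|x_i|(|f|+1)}\,(\partial f/\partial x_i)\,(x_i x_j)$. Writing $\psi_i := (-1)^{|x_i|(|f|+1)}\,\partial f/\partial x_i \in \mathcal{O}_{A^*}(U)$, the vanishing of $X_f$ becomes the system $\sum_i \psi_i\,(x_i x_j) = 0$ for all $j$, where the products $x_i x_j$ are taken in $A$. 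The sign bookkeeping here is the fiddliest step, but the payoff is that every sign depends only on $i$ and on $f$, so the $\psi_i$ behave as genuine scalar unknowns.

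The conceptual heart is then to read this system inside the scalar extension $R \otimes_{\mathbb{R}} A$, with $R = \mathcal{O}_{A^*}(U)$, which is an $R$-superalgebra in the sense of the Remark. The element $\Phi := \sum_i \psi_i \otimes x_i$ satisfies $\Phi\cdot(1 \otimes x_j) = 0$ for all $j$, so $\Phi$ lies in the annihilator of $R \otimes_{\mathbb{R}} A$. I would then show that $\mathrm{Ann}(A) = 0$ forces this annihilator to vanish. Expanding via $x_i x_j = \sum_k \gamma_{ij}^k x_k$, the condition $\Phi \in \mathrm{Ann}(R\otimes A)$ is the homogeneous linear system $\sum_i \gamma_{ij}^k \psi_i = 0$ (for all $j,k$) with real coefficient matrix $M = (\gamma_{ij}^k)$. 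By definition of the annihilator, $\mathrm{Ann}(A) = 0$ says precisely that $M$ has trivial kernel over $\mathbb{R}$; being injective over a field, $M$ admits a left inverse $P$ with real (hence central) entries, and applying $P$ to $M\psi = 0$ over $R$ yields $\psi = 0$. Thus every $\psi_i$, and therefore every $\partial f/\partial x_i$, vanishes.

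Finally, $\partial f/\partial \theta_j = 0$ for all odd coordinates forces $f \in C^\infty(U) \subset \mathcal{O}_{A^*}(U)$, and $\partial f/\partial u_i = 0$ for all even coordinates then forces $f$ to be locally constant, i.e.\ constant on each connected component of $U$. I expect the main obstacle to be the sign computation of the second paragraph together with the clean passage from $\mathrm{Ann}(A)=0$ to the vanishing of the annihilator after scalar extension; once the problem is linearized as $M\psi = 0$ with $M$ real and injective, the remainder is routine.
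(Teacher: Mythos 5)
Your sign computation, the reduction of $X_f=0$ to the system $\sum_i \psi_i\,\iota(x_ix_j)=0$ in $\mathcal{O}_{A^*}(U)$ (where $\iota\colon A\hookrightarrow\mathcal{O}_{A^*}(U)$ is the canonical injection), and the left-inverse argument over $R$ at the end are all correct. The gap is the single sentence ``$\Phi:=\sum_i\psi_i\otimes x_i$ satisfies $\Phi\cdot(1\otimes x_j)=0$ for all $j$.'' What $X_f=0$ gives you is the vanishing of $\sum_i\psi_i\,\iota(x_ix_j)$ \emph{inside} $\mathcal{O}_{A^*}(U)$; what you need is the vanishing of $\sum_i\psi_i\otimes(x_ix_j)$ in the free $R$-module $R\otimes_{\mathbb{R}}A$, i.e.\ the componentwise system $\sum_i\gamma_{ij}^k\psi_i=0$ for all $j,k$. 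The second condition is strictly stronger, because the multiplication map $R\otimes_{\mathbb{R}}A\to\mathcal{O}_{A^*}(U)$, $r\otimes a\mapsto r\,\iota(a)$, is not injective: the odd coordinates are nilpotent in $\mathcal{O}_{A^*}(U)$, so for instance $\theta_k\otimes\theta_k\mapsto\theta_k^2=0$. Equivalently, a relation $\sum_k g_k\,\iota(x_k)=0$ with $g_k\in\mathcal{O}_{A^*}(U)$ does not force $g_k=0$, since the $\iota(x_k)$ are not free generators over $\mathcal{O}_{A^*}(U)$. Because the $\psi_i$ are derivatives of $f$ and themselves contain odd variables, exactly this kind of cancellation occurs, and your linearized system is not a consequence of $X_f=0$.

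This is not a fixable presentation issue: the inference is genuinely false. Take $A=\mathbb{R}u\oplus\mathbb{R}\theta$ ($u$ even, $\theta$ odd) with $u^2=u$, $u\theta=\theta u=-\theta$, $\theta^2=0$; this is a commutative superalgebra, and $\mathrm{Ann}(A)=0$ since $(\alpha u+\beta\theta)u=\alpha u-\beta\theta$. For $f=u\theta$ one has $X_u=u\partial_u-\theta\partial_\theta$, $X_\theta=-\theta\partial_u$, and by \eqref{xf} (both signs are $+1$ here)
\begin{align*}
X_f \;=\; \frac{\partial f}{\partial u}\,X_u+\frac{\partial f}{\partial\theta}\,X_\theta
\;=\; \theta\,\bigl(u\partial_u-\theta\partial_\theta\bigr)+u\,\bigl(-\theta\partial_u\bigr)\;=\;0,
\end{align*}
although $f$ is not constant and $\psi_u=\theta\neq0$. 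Here $\Phi=\theta\otimes u+u\otimes\theta$ satisfies $\Phi\cdot(1\otimes u)=\theta\otimes u-u\otimes\theta\neq0$ in $R\otimes_{\mathbb{R}}A$, while its image $\theta u-u\theta=0$ in $\mathcal{O}_{A^*}(U)$ --- precisely the information lost in your tensor step. Note that this example simultaneously violates the proposition itself in the stated generality, so no argument from $\mathrm{Ann}(A)=0$ plus linear algebra alone can close the gap; a correct proof must use further structure. (For what it is worth, the paper's own one-line proof makes the same leap: it deduces ``$X_f=0\Rightarrow\partial f/\partial x_i=0$ for all $i$'' from the non-vanishing of the $X_{x_i}$, which would require linear independence of the $X_{x_i}$ over $\mathcal{O}_{A^*}(U)$, and the relation $\theta X_u+uX_\theta=0$ above shows this can fail. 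Even for Jordan superalgebras the issue persists on subdomains: for the subalgebra $\mathbb{R}e\oplus\mathbb{R}x$ of the Kaplansky superalgebra $K_3$ and $U=(0,\infty)$, the non-constant function $u^{-1/2}\theta$ lies in the kernel.)
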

    \begin{proof}
    	By assumption, $X_{x_i} \neq 0$ for all $i$. It follows from \eqref{xf} that $X_f=0$ if and only if $\frac{\partial f}{\partial x_i}=0$ for all $i$, which is the case if and only if $f$ is a constant function.
    \end{proof} 

    Let $A^*$ be the dual of a commutative superalgebra $A$. Let $U$ be an open subset of $A_0^*$. Let $\mathfrak{H}_A(U)$ denote the submodule of $\mathfrak{X}_{A^*}(U)$ spanned by $X_{f}$ over $\mathcal{O}_{A^*}(U)$. We obtain a distribution $\mathfrak{H}_A$ over the supermanifold $A^*$, which is not involutive in general. There is a canonical even symmetric bilinear paring $\mathcal{g}$ of $\mathfrak{H}_A$, which is defined by
    \begin{align}
    	\mathcal{g}: \mathfrak{H}_A(U) \times \mathfrak{H}_A(U) \rightarrow \mathcal{O}_{A^*}(U) \label{g} \\
    	(X_f, X_g) \rightarrow \{f,g\}. \notag
    \end{align}
    and then extended bilinearly. One can easily check that $\mathcal{g}$ is well-defined and that
    $
    	\mathcal{g}(X_a, X_b) = ab.
    $
    \begin{prop}
    	$\mathcal{g}$ is non-degenerate. 
    \end{prop}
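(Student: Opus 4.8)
The plan is to show that the radical of $\mathcal{g}$ is trivial by recognizing the pairing against the generators $X_{x_j}$ as the coordinate-coefficient functionals. First I would record that, by \eqref{xf}, every section of $\mathfrak{H}_A$ over $U$ is an $\mathcal{O}_{A^*}(U)$-linear combination of the finitely many vector fields $X_{x_1},\dots,X_{x_{m+n}}$; hence $\mathfrak{H}_A(U)=\sum_i \mathcal{O}_{A^*}(U)\,X_{x_i}$. Since these generate the module and $\mathcal{g}$ is $\mathcal{O}_{A^*}(U)$-linear, an element $Y\in\mathfrak{H}_A(U)$ lies in the radical of $\mathcal{g}$ if and only if $\mathcal{g}(Y,X_{x_j})=0$ for all $j$. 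Thus it suffices to prove that this condition forces $Y=0$.

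The key observation is that pairing against $X_{x_j}$ reproduces the $j$-th coordinate coefficient of $Y$. Using $\mathcal{g}(X_a,X_b)=ab$ one has $\mathcal{g}(X_{x_i},X_{x_j})=x_ix_j$, which is precisely the coefficient of $\frac{\partial}{\partial x_j}$ in the expression $X_{x_i}=\sum_j (x_ix_j)\frac{\partial}{\partial x_j}$. Writing a general $Y=\sum_i h_i X_{x_i}$ with $h_i\in\mathcal{O}_{A^*}(U)$ and using the left $\mathcal{O}_{A^*}(U)$-linearity of $\mathcal{g}$ in its first slot, I would obtain
\begin{align*}
	\mathcal{g}(Y,X_{x_j})=\sum_i h_i\,\mathcal{g}(X_{x_i},X_{x_j})=\sum_i h_i\,(x_ix_j)=Y^j,
\end{align*}
where $Y^j$ denotes the coefficient of $\frac{\partial}{\partial x_j}$ in $Y=\sum_j Y^j\frac{\partial}{\partial x_j}$. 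Both $Y\mapsto\mathcal{g}(Y,X_{x_j})$ and $Y\mapsto Y^j$ are $\mathcal{O}_{A^*}(U)$-linear functionals on $\mathfrak{H}_A(U)$ agreeing on the generating set $\{X_{x_i}\}$, so the identity is independent of the chosen expansion of $Y$.

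With this identity the conclusion is immediate: if $Y$ lies in the radical then $Y^j=\mathcal{g}(Y,X_{x_j})=0$ for every $j$, whence $Y=\sum_j Y^j\frac{\partial}{\partial x_j}=0$ as a vector field. Equivalently, the sharp map $Y\mapsto(\mathcal{g}(Y,X_{x_j}))_j$ is simply the inclusion $\mathfrak{H}_A(U)\hookrightarrow\mathfrak{X}_{A^*}(U)$ read off in coordinates, which is manifestly injective. I would note that this argument uses no hypothesis on $\mathrm{Ann}(A)$: for any direction with $x_j\in\mathrm{Ann}(A)$ both $X_{x_j}$ and the coefficient $Y^j=\sum_i h_i(x_ix_j)$ vanish automatically, so the identity $\mathcal{g}(Y,X_{x_j})=Y^j$ remains valid and the relevant coefficients are still recovered.

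I expect the only delicate point to be the bookkeeping of the super-signs: one must confirm that the left-linearity of $\mathcal{g}$ in the first argument and the left-coefficient convention for vector fields are compatible, so that no parity-dependent sign spoils the clean identity $\mathcal{g}(Y,X_{x_j})=Y^j$. Since both the structure formula $X_{x_i}=\sum_j(x_ix_j)\frac{\partial}{\partial x_j}$ and the value $\mathcal{g}(X_a,X_b)=ab$ are sign-free on basis elements, I anticipate these signs to align, but checking this termwise is where the care is required.
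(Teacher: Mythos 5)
Your proof is correct and is essentially the paper's own argument in coordinate form: the paper observes that $\mathcal{g}(X_f,X_g)=\{f,g\}=X_f(g)$, so a radical element is a derivation annihilating every function and hence the zero vector field, and your key identity $\mathcal{g}(Y,X_{x_j})=Y^j$ is exactly this observation specialized to $g=x_j$, since the coefficient of $\frac{\partial}{\partial x_j}$ in $Y$ is $Y(x_j)$. The only (harmless) difference is that you explicitly treat a general $\mathcal{O}_{A^*}(U)$-linear combination $Y=\sum_i h_i X_{x_i}$, whereas the paper's one-line proof is phrased for elements of the form $X_f$; this is a refinement of the same route, not a different one.
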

    \begin{proof}
    	By definition, $\mathcal{g}(X_f, \cdot) = 0$ if and only if $X_f(g)=0$ for all $g \in \mathcal{O}_{A^*}(U)$, $U$ open in $A_0^*$, which is the case if and only $X_f=0$.
    \end{proof}
    
    Let $\mathfrak{g}$ be a Lie superalgebra acting on $A$. The induced $\mathfrak{g}$-action on $A^*$ is given by
    \begin{align*}
    	\mathfrak{g} &\rightarrow \mathfrak{X}_{A^*}(A_0^*) \\
    	\xi &\mapsto \sum_i \xi x_i\frac{\partial}{\partial x_i}.
    \end{align*} 
    Let's take $\mathfrak{g}$ to be $\mathfrak{g}(A)$, the structure Lie superalgebra of $A$. Let $\mathfrak{m}_A$ denote the subspace of $\mathfrak{g}(A)$ spanned by $L_a$. Recall that there is the canonical injection $\iota: A \hookrightarrow \mathcal{O}_{A^*}(U)$ sending $a$ to $f_a$. Let $X_{f_a}$ denote the vector field associated to $f_{a}$. We have $X_{f_a}=\sum_j \{a,x_j\} \frac{\partial}{\partial x_j} = \sum_j(ax_j)\frac{\partial}{\partial x_j}$. It follows that $X_{L_a}=X_{f_a}$ and $\mathfrak{H}_A = \mathfrak{D}^{\mathfrak{m}_A} \subseteq \mathfrak{D}^{\mathfrak{g}(A)}$. 
    
    The infinitesimal action of $\mathfrak{g}(A)$ integrates to a global action of the structure Lie supergroup $G(A)$ on $A$. Let $\mathcal{O}$ be a $G(A)$-orbit. Let $x$ be a point in the underlying topological space of $\mathcal{O}$. Then the tangent space $T_x \mathcal{O}$ can be identified with the tangent space $T_x \mathcal{D}^{\mathfrak{g}(A)}$ of the generalized distribution $\mathcal{D}^{\mathfrak{g}(A)}$. Moreover, if $\mathcal{O}$ is $\mathfrak{m}_A$-regular, then $\mathcal{g}$ defines a pseudo-Riemannian metric over $\mathcal{O}$.
         
    \section{Jordan superalgebras}
    
	\begin{defn}
		A Jordan superalgebra $J$ is a commutative superalgebra satisfying the following super Jordan identity
		\begin{align}\label{joride}
			[L_a, L_{\{b,c\}}]=[L_{\{a,b\}},L_c] - (-1)^{|a||b|}[L_b, L_{\{a,c\}}].
		\end{align}
	    where $\{\cdot,\cdot\}$ is the algebraic product of $J$ and $[\cdot,\cdot]$ is the Lie superbracket of $\mathfrak{g}(J)$.
	\end{defn}
    \begin{rmk}
    	Choose $a$, $b$ and $c$ to be $x \in J_0$ in (\ref{joride}). One gets $[L_x,L_{\{x,x\}}]=0$. Applying this to $y \in J_0$ yields the usual Jordan identity
    	\begin{align*}
    		\{x,\{y,\{x,x\}\}\}=\{\{x,y\},\{x,x\}\}
    	\end{align*}
        for $J_0$.
    \end{rmk}
    \begin{rmk}
    	Recall that the super Jacobi identity of a Lie superalgebra $L$ is
    	\begin{align*}
    		[a, [b,c]]=[[a,b],c] + (-1)^{|a||b|}[b, [a,c]],
    	\end{align*} 
        which implies that
        \begin{align}\label{jorlieide}
        	[L_a, L_{[b,c]}]=[L_{[a,b]},L_c] + (-1)^{|a||b|}[L_b, L_{[a,c]}].
        \end{align} 
        One can then think of (\ref{joride}) as an analogue of (\ref{jorlieide}). 
    \end{rmk}
    \begin{lem}\label{kacfor1}
	    $[[L_a,L_b],L_c]=(-1)^{|b||c|}L_{[a,c,b]}$, where
	    \begin{align*}
	    	[\cdot,\cdot,\cdot]:~ &J \times J \times J \rightarrow J \\
	    	(a,b,c) &\mapsto \{a,\{b,c\}\} - \{\{a,b\},c\} 
	    \end{align*} 
        is the associator of $J$.
    \end{lem}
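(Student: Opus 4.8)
The plan is to read the right-hand side as a \emph{left-multiplication} operator and thereby reduce the statement to the assertion that the inner endomorphism $\delta := [L_a,L_b]$ is a superderivation of $J$. As a preliminary step I would record the elementary identity
\begin{align*}
	[D, L_c] = L_{Dc},
\end{align*}
valid for any homogeneous superderivation $D$ of $J$ and any $c \in J$: evaluating the left-hand side on an arbitrary $d \in J$ gives $D\{c,d\} - (-1)^{|D||c|}\{c, Dd\}$, and the Leibniz rule \eqref{supder} rewrites $D\{c,d\} = \{Dc,d\} + (-1)^{|D||c|}\{c, Dd\}$, so the last terms cancel and only $\{Dc, d\} = L_{Dc}d$ survives. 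Granting that $\delta$ is a superderivation (of parity $|a|+|b|$), this immediately yields $[[L_a,L_b],L_c] = [\delta, L_c] = L_{\delta c}$, so the whole problem collapses to computing the element $\delta c$.

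The heart of the argument is therefore to deduce the superderivation property of $\delta$ from the super Jordan identity \eqref{joride}. I would verify the Leibniz rule $\delta\{c,d\} = \{\delta c, d\} + (-1)^{(|a|+|b|)|c|}\{c,\delta d\}$ directly: writing $\delta = L_aL_b - (-1)^{|a||b|}L_bL_a$ and applying it to $\{c,d\}$ produces a collection of nested triple products, and the identity \eqref{joride} — applied to the test element $d$ and combined with supercommutativity to reorder factors — is exactly what reassembles these products into the Leibniz combination. This is where I expect the real obstacle to sit: since \eqref{joride} is not itself the Leibniz rule for $\delta$ but a distinct multilinear identity, the step is a genuine reorganization in which the Koszul signs must be tracked with care. (Equivalently, one may skip the conceptual detour and verify the lemma by evaluating both sides on $d$ and invoking \eqref{joride}; the combinatorics is the same.)

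It then remains to identify $\delta c$, which is a purely algebraic computation in $J$ requiring no derivation property. Expanding directly, $\delta c = \{a,\{b,c\}\} - (-1)^{|a||b|}\{b,\{a,c\}\}$, and applying supercommutativity $\{x,y\} = (-1)^{|x||y|}\{y,x\}$ to both summands gives
\begin{align*}
	(-1)^{|b||c|}[a,c,b] &= (-1)^{|b||c|}\bigl(\{a,\{c,b\}\} - \{\{a,c\},b\}\bigr) \\
	&= \{a,\{b,c\}\} - (-1)^{|a||b|}\{b,\{a,c\}\} = \delta c.
\end{align*}
Substituting this into $[[L_a,L_b],L_c] = L_{\delta c}$ from the first two paragraphs produces $[[L_a,L_b],L_c] = (-1)^{|b||c|}L_{[a,c,b]}$, which is the claim.
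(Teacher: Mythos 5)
Your two bookkeeping steps are correct: the identity $[D,L_c]=L_{Dc}$ for a homogeneous superderivation $D$ is right, and so is the sign computation showing $(-1)^{|b||c|}[a,c,b]=[L_a,L_b](c)$. Together they show the lemma is \emph{equivalent} to the assertion that $\delta=[L_a,L_b]$ is a superderivation of $J$. But that equivalence is essentially a tautology rather than a reduction --- it is the same equivalence the paper exploits in the opposite direction in Lemma \ref{unider}, where the superderivation property of $[L_a,L_b]$ is deduced \emph{from} Lemma \ref{kacfor1}. So after your preliminary steps, what remains is not a smaller statement but a restatement of the lemma itself, and this is exactly the part you do not carry out. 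You assert that \eqref{joride}, applied to a test element $d$ and combined with supercommutativity, ``is exactly what reassembles these products into the Leibniz combination,'' but you never perform the reassembly, and you yourself flag it as where the real obstacle sits. That obstacle is the entire content of the lemma.

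Concretely, the missing derivation cannot be done by reordering factors inside a single instance of \eqref{joride}: simple permutations of the three slots of \eqref{joride}, together with supercommutativity, only reproduce the identity itself. One has to evaluate \eqref{joride} on a fourth element $d$ to obtain a $4$-linear identity, take a second copy of that identity with two of the four arguments interchanged, and subtract; the resulting terms, each carrying its Koszul sign, must then be regrouped into the Leibniz identity $[L_a,L_b]\{c,d\}=\{[L_a,L_b]c,d\}+(-1)^{(|a|+|b|)|c|}\{c,[L_a,L_b]d\}$. This is a genuine computation --- the super analogue of the classical passage from the linearized Jordan identity to the associator formula --- and without it the proposal establishes nothing beyond routine equivalences. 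For comparison, the paper does not perform this computation either: its proof of Lemma \ref{kacfor1} is a citation to \cite{Kac77}. Your outline is therefore a reasonable plan for a self-contained proof, but as written it has a hole precisely where the mathematics lives.
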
 
    \begin{proof}
    	See \cite{Kac77}.
    \end{proof}
    \begin{prop}
    	The canonical bilinear form $\tau$ on $J$ is associative.
    \end{prop}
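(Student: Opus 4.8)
The plan is to reduce the associativity of $\tau$ to the vanishing of the supertrace of a single left multiplication, and then to invoke the two lemmas already established. Since $\tau(a,b)=\mathrm{str}(L_{\{a,b\}})$, the associativity identity $\tau(\{a,b\},c)=\tau(a,\{b,c\})$ becomes, after expanding both sides,
\begin{align*}
	\mathrm{str}\bigl(L_{\{\{a,b\},c\}}\bigr)=\mathrm{str}\bigl(L_{\{a,\{b,c\}\}}\bigr),
\end{align*}
that is, $\mathrm{str}(L_{[a,b,c]})=0$, where $[a,b,c]=\{a,\{b,c\}\}-\{\{a,b\},c\}$ is the associator of \cref{kacfor1}. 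By bilinearity it suffices to verify this for homogeneous $a,b,c$. Note also that $\tau$ is automatically even, since the supertrace of an odd endomorphism vanishes; so the identity is only nontrivial when $|a|+|b|+|c|$ is even, though the argument below will not need this case distinction.

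Next I would rewrite $L_{[a,b,c]}$ as a supercommutator of operators in $\mathrm{gl}(J)$. \Cref{kacfor1} states $[[L_a,L_b],L_c]=(-1)^{|b||c|}L_{[a,c,b]}$; relabelling $b\leftrightarrow c$ and multiplying through by the appropriate sign yields
\begin{align*}
	L_{[a,b,c]}=(-1)^{|b||c|}\,[[L_a,L_c],L_b].
\end{align*}
Thus left multiplication by the associator is, up to a sign, the supercommutator of $[L_a,L_c]\in\mathrm{gl}(J)$ with $L_b\in\mathrm{gl}(J)$.

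Finally I would apply \cref{str}, according to which the supertrace of any supercommutator in $\mathrm{gl}(J)$ vanishes. Taking $f=[L_a,L_c]$ and $g=L_b$ gives $\mathrm{str}([[L_a,L_c],L_b])=0$, hence $\mathrm{str}(L_{[a,b,c]})=0$, which is precisely the required identity. The only step demanding care is the index and parity bookkeeping when matching the associator $[a,b,c]$ appearing in $\tau$ with the associator $[a,c,b]$ produced by \cref{kacfor1}. All of the genuine content is already packaged into \cref{kacfor1} (the super Jordan identity rewritten through the associator) and \cref{str} (the supertrace annihilates supercommutators), so I do not expect any substantial obstacle beyond keeping the signs straight.
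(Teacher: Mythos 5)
Your proposal is correct and follows exactly the paper's own argument: reduce associativity of $\tau$ to $\mathrm{str}(L_{[a,b,c]})=0$, rewrite $L_{[a,b,c]}$ via Lemma \ref{kacfor1} as (a sign times) the supercommutator $[[L_a,L_c],L_b]$, and conclude by Lemma \ref{str} that its supertrace vanishes. The paper compresses this into one line, with the sign absorbed into a $\pm$; your version merely makes the relabelling and parity bookkeeping explicit.
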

    \begin{proof}
    	By Lemmas \ref{str} and \ref{kacfor1},  $\tau(ab,c)-\tau(a,bc)=\mathrm{str}(L_{[a,b,c]})=\pm\mathrm{str}([[L_a,L_c],L_b])=0$.
    \end{proof}
    \begin{lem}\label{strjor}
    	$\mathfrak{g}(J)$ is spanned by $L_a$ and $[L_b,L_c]$ for $a,b,c \in J$.
    \end{lem}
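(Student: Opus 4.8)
The plan is to prove the two inclusions separately, with the nontrivial one reduced to a closure computation. Set $V := \mathrm{span}\{L_a : a \in J\}$ and $W := \mathrm{span}\{[L_b, L_c] : b, c \in J\}$. Since every $L_a$ and every $[L_b, L_c]$ lies in $\mathfrak{g}(J)$ by definition, we have $V + W \subseteq \mathfrak{g}(J)$ for free. The substance of the lemma is the reverse inclusion, and for this I would show that $V + W$ is itself a Lie subsuperalgebra of $\mathrm{gl}(J)$. Because $\mathfrak{g}(J)$ is, by definition, the \emph{smallest} Lie subsuperalgebra containing all the $L_a$, once $V + W$ is known to be a subalgebra containing the $L_a$ we obtain $\mathfrak{g}(J) \subseteq V + W$, and hence equality.

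To verify closure it is enough, by bilinearity and super-antisymmetry of the bracket, to treat homogeneous spanning elements and check the three cases $[V, V]$, $[V, W]$, and $[W, W]$. The case $[V, V] \subseteq W$ is immediate from the definition of $W$. For $[V, W]$, super-antisymmetry turns $[L_c, [L_a, L_b]]$ into a sign times $[[L_a, L_b], L_c]$, and Lemma~\ref{kacfor1} identifies this triple bracket with $(-1)^{|b||c|} L_{[a,c,b]} \in V$; thus $[V, W] \subseteq V$. The decisive observation is that Lemma~\ref{kacfor1} guarantees that bracketing any element of $W$ with any $L_c$ lands back in $V$.

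The only genuinely interesting case is $[W, W]$. Here I would apply the super Jacobi identity to $[[L_a, L_b], [L_c, L_d]]$, splitting the inner bracket $[L_c, L_d]$ so as to expose triple brackets of the form $[[L_a, L_b], L_c]$ and $[[L_a, L_b], L_d]$. By Lemma~\ref{kacfor1} each of these collapses to an element of $V$, so the two resulting summands are each a bracket of an element of $V$ with a single left multiplication, i.e.\ each lies in $W$. Hence $[W, W] \subseteq W$. Combining the three cases shows $V + W$ is closed under the superbracket.

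I expect the main obstacle to be purely bookkeeping: tracking the Koszul signs correctly through the super-antisymmetry relation $[X,Y] = -(-1)^{|X||Y|}[Y,X]$ and through the super Jacobi identity, since these govern each reduction. The genuine mathematical input — that a triple bracket of left multiplications is again a left multiplication — is supplied entirely by Lemma~\ref{kacfor1}, so no further identity beyond the Jacobi identity is needed, and the argument terminates after the single Jacobi expansion in the $[W,W]$ case.
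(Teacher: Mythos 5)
Your proof is correct, and it is organized genuinely differently from the paper's, even though both hinge on the same key input, Lemma~\ref{kacfor1}. The paper first asserts (via anticommutativity and the super Jacobi identity) that $\mathfrak{g}(J)$ is spanned by the $L_a$ together with left-normed brackets $[[\cdots[[L_{a_1},L_{a_2}],L_{a_3}],\cdots],L_{a_q}]$, and then collapses such words inductively by Lemma~\ref{kacfor1}: a word of odd length reduces to some $L_b$, a word of even length to some $[L_c,L_d]$. You instead bypass this normal-form step entirely by verifying that $V+W$ is itself a Lie subsuperalgebra: $[V,V]\subseteq W$ by definition, $[V,W]\subseteq V$ by super-antisymmetry plus Lemma~\ref{kacfor1}, and $[W,W]\subseteq W$ by a single application of the super Jacobi identity,
\begin{align*}
	[[L_a,L_b],[L_c,L_d]] = [[[L_a,L_b],L_c],L_d] + (-1)^{(|a|+|b|)|c|}\,[L_c,[[L_a,L_b],L_d]],
\end{align*}
where both summands land in $[V,V]\subseteq W$ because the inner triple brackets lie in $V$ by Lemma~\ref{kacfor1} (note $|[L_a,L_b]|=|a|+|b|$, so the Koszul sign is as displayed). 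Since $\mathfrak{g}(J)$ is by definition the smallest Lie subsuperalgebra of $\mathrm{gl}(J)$ containing all $L_a$, minimality gives $\mathfrak{g}(J)\subseteq V+W$ and hence equality. What your route buys is that the paper's ``it is not hard to see'' step --- that the generated subalgebra is spanned by left-normed words, which is itself an induction on bracket length --- is never needed: your closure check terminates after one Jacobi expansion precisely because Lemma~\ref{kacfor1} kills all words of length three. The paper's route, in exchange, makes the parity pattern (odd words give $L_b$, even words give $[L_c,L_d]$) explicit. Both are sound; yours is the tighter and more self-contained of the two.
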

    \begin{proof}
    	Using the anti-commutativity of $[\cdot,\cdot]$ and the super Jacobi identity, it is not hard to see that $\mathfrak{g}(J)$ is spanned by $L_a$ and endomorphisms of the form
    	\begin{align}
    		[[L_{a_1},L_{a_2}],L_{a_3}],\cdots],L_{a_q}],
    	\end{align}
    	where $q \geq 2$. By Lemma \ref{kacfor1}, those endomorphism can be reduced to either $L_b$ when $q$ is odd or $[L_c,L_d]$ when $q$ is even.
    \end{proof}
   
	\begin{lem}\label{unider}
		If $J$ is a unital Jordan superalgebra, then $\mathrm{Der}_0(J)=\mathrm{span}\{[L_a,L_b]|a,b \in J\}$.
	\end{lem}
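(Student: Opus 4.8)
The plan is to prove the two inclusions of $\mathrm{Der}_0(J)=\mathrm{span}\{[L_a,L_b]\}$ separately. The heart of the argument is the inclusion $\mathrm{span}\{[L_a,L_b]\}\subseteq\mathrm{Der}_0(J)$, which in fact does not use unitality at all; unitality enters only to close the reverse inclusion. The starting point I would record is an operator reformulation of the Leibniz rule: an endomorphism $d\in\mathrm{gl}(J)$ of homogeneous parity $|d|$ is a derivation if and only if $[d,L_c]=L_{d(c)}$ for all $c\in J$. This is immediate, since unwinding $[d,L_c](y)=d(\{c,y\})-(-1)^{|d||c|}\{c,d(y)\}$ and comparing with $L_{d(c)}(y)=\{d(c),y\}$ shows that this operator identity is exactly the super Leibniz rule \eqref{supder}.

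For the first inclusion I would fix $a,b\in J$ and set $d=[L_a,L_b]$, of parity $|a|+|b|$. By the criterion it suffices to verify $[[L_a,L_b],L_c]=L_{[L_a,L_b](c)}$. Lemma \ref{kacfor1} already identifies the left-hand side as $(-1)^{|b||c|}L_{[a,c,b]}$, so the whole task reduces to the purely algebraic identity
\begin{align*}
(-1)^{|b||c|}[a,c,b]=[L_a,L_b](c)=\{a,\{b,c\}\}-(-1)^{|a||b|}\{b,\{a,c\}\}.
\end{align*}
This is a short sign computation: expanding $[a,c,b]=\{a,\{c,b\}\}-\{\{a,c\},b\}$ and pushing the factor $(-1)^{|b||c|}$ inside, using the graded commutativity relations $\{c,b\}=(-1)^{|b||c|}\{b,c\}$ and $\{\{a,c\},b\}=(-1)^{|b|(|a|+|c|)}\{b,\{a,c\}\}$, turns the two terms into exactly $\{a,\{b,c\}\}$ and $-(-1)^{|a||b|}\{b,\{a,c\}\}$. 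Hence each $[L_a,L_b]$ is a derivation; since it also lies in $\mathfrak{g}(J)$ by construction, it belongs to $\mathrm{Der}_0(J)=\mathrm{Der}(J)\cap\mathfrak{g}(J)$, and the span is contained in $\mathrm{Der}_0(J)$.

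For the reverse inclusion I would invoke Lemma \ref{strjor}: any $d\in\mathrm{Der}_0(J)\subseteq\mathfrak{g}(J)$ can be written as $d=L_a+D$ with $D\in\mathrm{span}\{[L_b,L_c]\}$ and $a\in J$, the $L$-part collapsing to a single $L_a$ by linearity of $a\mapsto L_a$. By the first part $D$ is a derivation, so $L_a=d-D$ is a derivation as well. Here unitality does the final work: every derivation annihilates the unit, because $d(1)=d(\{1,1\})=2\,d(1)$ forces $d(1)=0$, whereas $L_a(1)=a$. Thus $a=L_a(1)=0$, so $L_a=0$ and $d=D\in\mathrm{span}\{[L_b,L_c]\}$, completing the proof.

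The only genuinely delicate step is the sign bookkeeping in the associator identity of the first inclusion; everything else is formal once the operator criterion and Lemmas \ref{kacfor1} and \ref{strjor} are in hand. It is worth flagging that unitality is essential and is used precisely to rule out a nonzero multiplication operator $L_a$ being a derivation — in the non-unital case this last step would fail and the statement would require a different argument.
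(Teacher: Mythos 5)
Your proof is correct and follows essentially the same route as the paper's: both reduce the derivation property of $[L_a,L_b]$ to the operator identity $[[L_a,L_b],L_c]=L_{[L_a,L_b](c)}$ via Lemma \ref{kacfor1} together with the sign identity relating the associator to $[L_a,L_b](c)$, and both close the reverse inclusion by combining Lemma \ref{strjor} with the fact that a derivation of a unital algebra kills the unit while $L_a(1)=a$. You merely spell out the sign bookkeeping and the decomposition $d=L_a+D$ that the paper leaves implicit.
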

    \begin{proof}
    	We first prove that $[L_a,L_b]$ is a superderivation. By (\ref{supder}), it suffices to show that $[[L_a,L_b],L_c]=L_{[L_a,L_b](c)}$. But this follows directly from Lemma \ref{kacfor1} and
        \begin{align*}
        	[a,b,c]=(-1)^{|b||c|}[L_a,L_c](b).
        \end{align*}
        If $J$ is unital, then any derivation of the identity $1 \in J$ must be $0$. Since $L_a(1) = a$, we conclude that $\mathrm{Der}_0(J)=\mathrm{span}\{[L_a,L_b]|a,b \in J\}$.
    \end{proof}
    
    Let $\mathfrak{m}_J$ denote the subspace of $\mathfrak{g}(J)$ spanned by $L_a$.
    
    \begin{cor}\label{decompofgj}
    	$\mathfrak{g}(J)=\mathfrak{m}_J \oplus [\mathfrak{m}_J,\mathfrak{m}_J]$ for a unital Jordan superalgebra $J$.
    \end{cor}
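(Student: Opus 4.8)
The plan is to prove two things: that $\mathfrak{g}(J) = \mathfrak{m}_J + [\mathfrak{m}_J,\mathfrak{m}_J]$, and that this sum is direct. The first is essentially a restatement of Lemma \ref{strjor}. Indeed, since $\mathfrak{m}_J = \mathrm{span}\{L_a : a \in J\}$ and the superbracket is bilinear, we have $[\mathfrak{m}_J,\mathfrak{m}_J] = \mathrm{span}\{[L_b,L_c] : b,c \in J\}$, and Lemma \ref{strjor} says precisely that $L_a$ together with $[L_b,L_c]$ span $\mathfrak{g}(J)$. So the only real content of the corollary is the directness of the sum, i.e.\ that $\mathfrak{m}_J \cap [\mathfrak{m}_J,\mathfrak{m}_J] = \{0\}$.

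For directness I would invoke the unital hypothesis together with Lemma \ref{unider}. By Lemma \ref{unider}, $[\mathfrak{m}_J,\mathfrak{m}_J] = \mathrm{Der}_0(J)$ consists of (inner) superderivations of $J$. Now observe that any superderivation $d$ of a unital superalgebra must annihilate the identity: applying the superderivation rule \eqref{supder} to the product $1 = 1 \cdot 1$ and using $|1| = 0$ gives $d(1) = d(1) + d(1) = 2 d(1)$, whence $d(1) = 0$.

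To finish, I would take an arbitrary element $\phi \in \mathfrak{m}_J \cap [\mathfrak{m}_J,\mathfrak{m}_J]$. Since $\phi \in \mathfrak{m}_J$ and the map $a \mapsto L_a$ is linear, we may write $\phi = L_a$ for some $a \in J$. Since $\phi \in [\mathfrak{m}_J,\mathfrak{m}_J] = \mathrm{Der}_0(J)$, it is a derivation, so $\phi(1) = 0$ by the previous paragraph. But on the other hand $\phi(1) = L_a(1) = \{a,1\} = a$ because $1$ is the unit. Hence $a = 0$, so $\phi = 0$, and the intersection is trivial. Combined with the spanning statement, this yields $\mathfrak{g}(J) = \mathfrak{m}_J \oplus [\mathfrak{m}_J,\mathfrak{m}_J]$.

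There is no serious obstacle here; the corollary follows quickly from the two preceding lemmas. The one point that genuinely requires the hypotheses is the unital condition, which is exactly what forces $L_a$ to vanish in the event that it happens to be a derivation. Without unitality the evaluation $\phi(1) = a$ would be unavailable, and the subspaces $\mathfrak{m}_J$ and $[\mathfrak{m}_J,\mathfrak{m}_J]$ could in principle overlap, so I would be careful to flag that this is the only place the assumption enters.
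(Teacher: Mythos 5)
Your proof is correct and follows exactly the paper's route: the paper's own proof simply cites Lemmas \ref{strjor} and \ref{unider}, and the details it leaves implicit are precisely the ones you supply (spanning from Lemma \ref{strjor}, and triviality of the intersection because inner derivations annihilate the unit while $L_a(1)=a$). Nothing further is needed.
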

    \begin{proof}
    	This follows directly from Lemmas \ref{strjor} and \ref{unider}.
    \end{proof}

    \subsection{Peirce decompositions}
    
    Let $J$ be a Jordan superalgebra. Let $e \in J_0$ be an idempotent element, that is, $e^2=e$. Applying \eqref{joride} to $e$ with $b=c=e$ gives
    \begin{align*}
    	[L_a, L_{e}](e)=2[L_{\{a,e\}},L_e](e),
    \end{align*}
    which is equivalent to
    $
    \{e,a\}-3\{e, \{e, a\}\} +2 \{e, \{e, \{e, a\}\} = 0.
    $
    Since $a$ can be any element in $J$, we conclude that
    \begin{align*}
    	2L_e^3 - 3 L_e^2 + L_e = 0.
    \end{align*}
    It follows that $L_e$ is diagonalizable and has eigenvalues $0, \frac{1}{2}, 1$. Let $P_{\lambda}$ denote the eigenspace of $L_e$, $\lambda=0, \frac{1}{2}, 1$. 
    
    \begin{prop}\cite{Martin17}*{Theorem 9}\label{idemdecomp}
    	Let $e \in J_0$ be an idempotent element. The decomposition $J=P_0 \oplus P_{\frac{1}{2}} \oplus P_1$ associated to $e$ has the following properties
    	\begin{enumerate}
    		\item $P_0$ and $P_1$ are subalgebras of $J$ with $\{P_0, P_1\}=\{0\}$;
    		\item $\{P_0, P_{\frac{1}{2}}\} \subseteq P_{\frac{1}{2}}$, $\{P_1, P_{\frac{1}{2}}\} \subseteq P_{\frac{1}{2}}$;
    		\item $\{P_{\frac{1}{2}}, P_{\frac{1}{2}}\} \subseteq P_0 \oplus P_1$.
    	\end{enumerate}
    \end{prop}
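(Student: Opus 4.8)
The plan is to derive all three sets of relations directly from the super Jordan identity \eqref{joride}, specializing its arguments to $e$ and to eigenvectors of $L_e$, and exploiting throughout that $e$ is even so that the sign $(-1)^{|a||b|}$ is harmless whenever $a$ or $b$ equals $e$. Recall from the computation preceding the statement that $L_e$ is diagonalizable with eigenvalues $0,\tfrac12,1$ and eigenspaces $P_0, P_{1/2}, P_1$; since $L_e$ is an even operator, each $P_\lambda$ is a graded subspace. The first step is to record which left multiplications commute with $L_e$. Setting $a = x \in P_\lambda$ and $b = c = e$ in \eqref{joride}, and using $\{e,e\} = e$ together with $\{x,e\} = \{e,x\} = \lambda x$, the identity collapses to $[L_x, L_e] = 2\lambda\,[L_x, L_e]$, i.e. $(1 - 2\lambda)[L_x, L_e] = 0$. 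Hence $[L_x, L_e] = 0$ whenever $\lambda \neq \tfrac12$, that is, for every $x \in P_0 \oplus P_1$.

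From this, parts 1 and 2 follow mechanically. Because $L_x$ commutes with $L_e$ for $x \in P_0 \cup P_1$, it preserves each eigenspace: if $y \in P_\mu$ then $L_e(\{x,y\}) = L_x(L_e y) = \mu\{x,y\}$, so $\{x,y\} \in P_\mu$. Taking $\mu = 0$ and $\mu = 1$ shows $P_0$ and $P_1$ are subalgebras, and taking $\mu = \tfrac12$ gives $\{P_0, P_{1/2}\} \subseteq P_{1/2}$ and $\{P_1, P_{1/2}\} \subseteq P_{1/2}$. For the orthogonality, I apply the inclusion in both orders: letting $P_0$ act gives $\{P_0, P_1\} \subseteq P_1$, while letting $P_1$ act gives $\{P_1, P_0\} \subseteq P_0$; by commutativity these two products coincide, so $\{P_0, P_1\} \subseteq P_0 \cap P_1 = \{0\}$.

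For part 3, I take $x, y \in P_{1/2}$ and set $a = e$, $b = x$, $c = y$ in \eqref{joride}. Using $\{e,x\} = \tfrac12 x$ and $\{e,y\} = \tfrac12 y$, the right-hand side becomes $\tfrac12[L_x, L_y] - \tfrac12[L_x, L_y] = 0$, so $[L_e, L_{\{x,y\}}] = 0$. Evaluating this operator identity on the element $e$ itself and using $e^2 = e$ together with $\{\{x,y\}, e\} = L_e\{x,y\}$ turns it into $L_e^2\{x,y\} = L_e\{x,y\}$, i.e. $(L_e^2 - L_e)\{x,y\} = 0$. Since the scalar $\lambda^2 - \lambda$ vanishes precisely on $P_0 \oplus P_1$ and equals $-\tfrac14 \neq 0$ on $P_{1/2}$, this forces the $P_{1/2}$-component of $\{x,y\}$ to vanish, giving $\{P_{1/2}, P_{1/2}\} \subseteq P_0 \oplus P_1$.

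The only genuinely non-routine point is the last step: the specialization produces information about the \emph{operator} $L_{\{x,y\}}$ (that it commutes with $L_e$), whereas the conclusion is about the \emph{element} $\{x,y\}$. The bridge is to feed the idempotent $e$ back into the commutation relation, converting the operator statement into the eigenvalue equation $(L_e^2 - L_e)\{x,y\} = 0$. Everything else is bookkeeping, and the parity signs never obstruct the argument because $e$ is even and the grading is automatically respected.
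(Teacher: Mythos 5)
Your proof is correct. Note, however, that the paper itself offers no argument for this proposition: it is quoted directly from Martin (Theorem 9 of \cite{Martin17}), so there is no internal proof to compare against; what you have produced is a self-contained verification, namely the standard Peirce-decomposition argument adapted to the super setting. All three pillars check out: (i) substituting $a=x\in P_\lambda$, $b=c=e$ into \eqref{joride}, and using that the super bracket with the even operator $L_e$ is an ordinary commutator, gives $(1-2\lambda)[L_x,L_e]=0$, hence $L_x$ commutes with $L_e$ for $\lambda\in\{0,1\}$; (ii) commuting operators preserve eigenspaces, which yields parts 1 and 2, with the orthogonality $\{P_0,P_1\}\subseteq P_0\cap P_1=\{0\}$ obtained from supercommutativity; (iii) the substitution $a=e$, $b=x$, $c=y$ with $x,y\in P_{\frac{1}{2}}$ makes the right-hand side of \eqref{joride} cancel, and evaluating the resulting operator identity $[L_e,L_{\{x,y\}}]=0$ on $e$ gives $(L_e^2-L_e)\{x,y\}=0$, which kills exactly the $P_{\frac{1}{2}}$-component since $\lambda^2-\lambda=-\tfrac14\neq 0$ there and $\lambda^2-\lambda=0$ on $P_0\oplus P_1$. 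The only points one could quibble over are the implicit uses of gradedness (each $P_\lambda$ is a graded subspace because $L_e$ is even) and of homogeneity (one argues on parity-homogeneous elements and extends bilinearly, the signs involving $e$ being trivial); you flag both, so the argument is complete. Your bridge in step (iii) --- feeding $e$ back into a commutation relation to convert operator information into an eigenvalue equation --- is exactly the device the paper uses in the computation preceding the statement to establish $2L_e^3-3L_e^2+L_e=0$, so your proof is also stylistically consistent with the surrounding text.
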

    \begin{rmk}\label{pdor}
    	For a pseudo-Euclidean Jordan superalgebra $(J,\beta)$, $P_{\lambda}$ is orthogonal to $P_{\lambda'}$ for $\lambda \neq \lambda'$ because
    	\begin{align*}
    		\lambda \beta(x,y) = \beta(\{e,x\},y) = \beta(x, \{e,y\}) = \lambda' \beta(x,y)
    	\end{align*}
    	for $x \in P_{\lambda}$ and $y \in P_{\lambda'}$.
    \end{rmk}
    
    Let $J$ be a unital Jordan superalgebra. An idempotent element $e \in J_0$ is called primitive if there is no decomposition $e=e_1+e_2$ with idempotent elements $e_1$ and $e_2$. A Jordan frame of $J$ is a collection $\{e_i\}_{i=1}^r$ of primitive idempotent elements such that
    \begin{align*}
    	\{e_i, e_j\} = \delta_{ij} e_i, \quad \sum_{i=1}^r e_i = 1.
    \end{align*}
    Let $a=b=e_i$ and $c=e_j$ in \eqref{joride}, we have $[L_{e_i}, L_{e_j}]=0$. On the other hand, if $x$ is a common eigenvector of all $L_i$ with $L_i x = \lambda_i x$, we must have $\sum_{i=1}^r \lambda_i = 1$. Therefore, there exists a decomposition $J= \bigoplus_{i \leq j} P_{ij}$, where $P_{ij}=\{x \in J: e_i x = e_j x = \frac{1}{2} x\}$ for $i \neq j$ and $P_{ii}=\{x \in J: e_i x = x\}$. (Note that $P_{ij}=P_{ji}$.) This is called the Peirce decomposition of $J$ with respect to the Jordan frame $\{e_i\}_{i=1}^r$. Using Proposition \ref{idemdecomp}, it is easy to show that
    \begin{prop}\label{peirce}
    	Let $J$ be a unital Jordan superalgebra with Jordan frame $\{e_i\}_{i=1}^r$. The decomposition $J= \bigoplus_{i \leq j} P_{ij}$ associated to the Jordan frame satisfies that following properties.
    	\begin{enumerate}
    		\item $P_{ii}$ are subalgebras of $J$ for all $i$ with $\{P_{ii}, P_{jj}\}=0$ for $i \neq j$;
    		\item $\{P_{ii},P_{ij}\} \subseteq P_{ij}$ for $i \neq j$, $\{P_{ii},P_{jk}\} = \{0\}$ when $i, j, k$ are all distinct;
    		\item $\{P_{ij},P_{ij}\} \subseteq P_{ii} \oplus P_{jj}$ for $i\neq j$, $\{P_{ij},P_{jk}\} \subseteq P_{ik}$ when $i, j, k$ are all distinct, and $\{P_{ij},P_{kl}\} = \{0\}$ when $i, j, k, l$ are all distinct. 
    	\end{enumerate}
    \end{prop}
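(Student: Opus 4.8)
The plan is to reduce everything to the single–idempotent Peirce decomposition of Proposition~\ref{idemdecomp}, applied simultaneously to every frame element $e_k$. Since $[L_{e_i},L_{e_j}]=0$, the commuting operators $L_{e_1},\dots,L_{e_r}$ are simultaneously diagonalizable, and the spaces $P_{ij}$ are precisely their common eigenspaces: writing $P^{(k)}_{\lambda}$ for the $\lambda$-eigenspace of $L_{e_k}$ with $\lambda\in\{0,\tfrac12,1\}$, one has $P_{ii}=P^{(i)}_{1}\cap\bigcap_{k\neq i}P^{(k)}_{0}$ and, for $i\neq j$, $P_{ij}=P^{(i)}_{1/2}\cap P^{(j)}_{1/2}\cap\bigcap_{k\neq i,j}P^{(k)}_{0}$. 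The decisive constraint is that applying $\sum_k e_k=1$ to a nonzero common eigenvector $x$ with $L_{e_k}x=\lambda_k x$ forces $\sum_k \lambda_k=1$; since each $\lambda_k\in\{0,\tfrac12,1\}$ is nonnegative, the only admissible tuples are those with a single entry equal to $1$ (giving $P_{ii}$) or exactly two entries equal to $\tfrac12$ (giving $P_{ij}$). This simultaneously justifies the asserted decomposition $J=\bigoplus_{i\le j}P_{ij}$ and provides the selection rule used throughout.

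For each product $\{x,y\}$ with $x,y$ ranging over prescribed Peirce spaces, I would track the $L_{e_k}$-eigenvalue behaviour of $\{x,y\}$ for every $k$ separately, using the multiplication rules of Proposition~\ref{idemdecomp}: $P^{(k)}_{0}$ and $P^{(k)}_{1}$ are subalgebras, $\{P^{(k)}_{0},P^{(k)}_{1}\}=0$, both $\{P^{(k)}_{0},P^{(k)}_{1/2}\}$ and $\{P^{(k)}_{1},P^{(k)}_{1/2}\}$ lie in $P^{(k)}_{1/2}$, and $\{P^{(k)}_{1/2},P^{(k)}_{1/2}\}\subseteq P^{(k)}_{0}\oplus P^{(k)}_{1}$. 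Because the $P_{ef}$ are the common eigenspaces, $\{x,y\}$ lands in the direct sum of those $P_{ef}$ whose eigenvalue tuple is compatible with all $r$ per-idempotent constraints together with $\sum_k\lambda_k=1$. As an illustration, for $\{P_{ij},P_{jk}\}$ with $i,j,k$ distinct: testing against $e_i$ gives $\{P^{(i)}_{1/2},P^{(i)}_{0}\}\subseteq P^{(i)}_{1/2}$, against $e_k$ gives $P^{(k)}_{1/2}$, against $e_j$ gives $P^{(j)}_{0}\oplus P^{(j)}_{1}$, and against each remaining $e_l$ gives $P^{(l)}_{0}$; the sum-to-one rule then forces the $e_j$-eigenvalue to be $0$, so the tuple has $\tfrac12$ at positions $i$ and $k$ and $0$ elsewhere, i.e. $\{x,y\}\in P_{ik}$.

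The remaining assertions follow the same template and I would record them in a single eigenvalue-tuple table rather than repeating the computation six times. The two vanishing statements come out immediately: $\{P_{ii},P_{jk}\}=0$ because against $e_i$ one meets a $\{P^{(i)}_{1},P^{(i)}_{0}\}=0$ product, and $\{P_{ij},P_{kl}\}=0$ because the four distinct indices each force eigenvalue $\tfrac12$, so $\sum_k\lambda_k\ge 2>1$ and no compatible common eigenspace exists. I do not expect a genuine obstacle here: the argument is a finite case check, and the only point requiring care is the bookkeeping that combines the per-idempotent rules of Proposition~\ref{idemdecomp} with the constraint $\sum_k\lambda_k=1$ to isolate the correct target space (or to detect that it must vanish).
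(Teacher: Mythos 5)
Your proposal is correct and takes essentially the same route as the paper, whose entire argument is the remark that the statement follows from Proposition~\ref{idemdecomp} combined with the facts, established just before the proposition, that $[L_{e_i},L_{e_j}]=0$ and that a common eigenvector with $L_{e_i}x=\lambda_i x$ must satisfy $\sum_{i=1}^r\lambda_i=1$. Your write-up merely supplies the bookkeeping (simultaneous diagonalization, per-idempotent multiplication rules, sum-to-one selection) that the paper leaves implicit as ``easy to show.''
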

    
    \subsection{Euclidean Jordan superalgebras}
    
    \begin{prop}\label{euclijor}
    	Let $J$ be a Jordan algebra. The following statements hold.
    	\begin{enumerate}[ref={Point~\arabic*}]
    		\item \label{1} $J$ is semi-simple if and only if it admits a (unique) decomposition $J=\bigoplus_i I_i$ where $I_i$ are simple ideals of $J$.
    		\item \label{2} There exists a bilinear form $\beta$ on $J$ such that $(J,\beta)$ is Euclidean if and only if $J$ is postive.
    		\item \label{3} If $J$ is positive, there exists a Jordan frame $\{e_i\}_{i=1}^r$ such that $x = \sum_{i=1}^r \lambda_i e_i$ for every $x \in J$. 
     	\end{enumerate}
    \end{prop}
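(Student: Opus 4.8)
The three assertions are the classical structure and spectral theorems for finite‑dimensional Jordan algebras, so the plan is to assemble them from the machinery already in place, isolating the one genuinely analytic ingredient (the spectral resolution of a formally real algebra). Throughout I use that $\tau(a,b)=\mathrm{tr}(L_{ab})$ is symmetric (by commutativity) and associative (established just above), and that for an idempotent $e$ the operator $L_e$ has spectrum contained in $\{0,\tfrac12,1\}$ with $L_e e = e$, so $\mathrm{tr}(L_e)\geq 1>0$.

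For \ref{1}, semi‑simplicity means by definition that $\tau$ is non‑degenerate, and together with associativity this makes $(J,\tau)$ pseudo‑Euclidean, so Proposition \ref{betadecomp} supplies a $\tau$‑orthogonal decomposition $J=\bigoplus_i I_i$ into $\tau$‑irreducible ideals. I would then argue that each $\tau$‑irreducible $I_i$ is simple: any ideal $K$ of $I_i$ is automatically an ideal of $J$ because $\{I_i,I_j\}\subseteq I_i\cap I_j=\{0\}$ for $j\neq i$; associativity of $\tau$ makes the $\tau$‑radical of $I_i$ an ideal, which vanishes since $\tau|_{I_i}$ is non‑degenerate, and a Jordan algebra with non‑degenerate trace form has no nonzero nilpotent ideals, so no proper $\tau$‑non‑degenerate ideal survives and $I_i$ is simple. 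Conversely, if $J=\bigoplus_i I_i$ with $I_i$ simple, then $\{I_i,I_j\}=\{0\}$ gives $\tau(I_i,I_j)=0$, so $\tau$ is block‑diagonal, and each simple summand has non‑degenerate $\tau$ (its $\tau$‑radical is a proper ideal, hence zero, using that $\tau(1,1)=\dim I_i\neq 0$); thus $\tau$ is non‑degenerate and $J$ is semi‑simple. Uniqueness is the standard statement that the simple summands are exactly the minimal ideals of $J$.

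For \ref{2}, the implication "$J$ positive $\Rightarrow$ Euclidean $\beta$ exists" is immediate: take $\beta=\tau$, which is symmetric, associative, and positive definite, hence non‑degenerate with signature $(m,0)$, so $(J,\tau)$ is Euclidean. For the converse, suppose $(J,\beta)$ is Euclidean. Associativity together with commutativity gives $\beta(L_a x,y)=\beta(ax,y)=\beta(x,ay)=\beta(x,L_a y)$, so every $L_a$ is self‑adjoint for the inner product $\beta$; in particular $\tau$ is non‑degenerate, so $J$ is semi‑simple by \ref{1} and therefore unital. Self‑adjointness of the $L_a$ is precisely the hypothesis under which the spectral resolution of \ref{3} is obtained, so the same argument yields $x=\sum_i\lambda_i e_i$ with $\{e_i\}$ a Jordan frame; then $x^2=\sum_i\lambda_i^2 e_i$ gives $\tau(x,x)=\mathrm{tr}(L_{x^2})=\sum_i\lambda_i^2\,\mathrm{tr}(L_{e_i})>0$ for $x\neq 0$ by the positivity of $\mathrm{tr}(L_{e_i})$ noted above, so $J$ is positive.

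Finally, \ref{3} is the spectral theorem, and this is where the real work lies. From positivity, $\tau$ is positive definite and associative, so each $L_a$ is $\tau$‑self‑adjoint and $J$ is unital by \ref{1}. Fixing $x$, the commutative associative subalgebra $\mathbb{R}[x]$ it generates, together with the diagonalizability of the self‑adjoint operator $L_x$ with real spectrum, yields a family of orthogonal idempotents (a functional calculus inside $\mathbb{R}[x]$) whose refinement to primitive idempotents produces a Jordan frame with $x=\sum_i\lambda_i e_i$; the Peirce structure of Propositions \ref{idemdecomp} and \ref{peirce} governs the orthogonality relations $\{e_i,e_j\}=\delta_{ij}e_i$ and $\sum_i e_i=1$. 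The main obstacle is exactly this step: showing that positivity (equivalently, formal reality, equivalently self‑adjointness of all $L_a$) forces the primitive idempotents to be genuine and to resolve the identity — this is the Jordan–von Neumann–Wigner/Faraut–Korányi spectral theorem, which I would either cite or reconstruct inductively from the Peirce decomposition, the remaining statements of \ref{1} and \ref{2} then following as above.
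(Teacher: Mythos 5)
First, a structural point: the paper offers no proof of Proposition \ref{euclijor} at all. It is stated as a recollection of the classical theory of formally real Jordan algebras (Jordan--von Neumann--Wigner, Koecher, Faraut--Kor\'anyi), with \cite{Koecher99} invoked immediately afterwards in the proof of Proposition \ref{eucuni}. So there is no ``paper's route'' to compare yours against; your plan of assembling \ref{1} from Proposition \ref{betadecomp} and deferring \ref{3} to the classical spectral theorem is entirely consistent with how the authors treat these facts, and your forward direction of \ref{2} (take $\beta=\tau$) and your trace-form argument for \ref{1} are essentially the standard ones.

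There is, however, one genuine gap, in your converse of \ref{2}. From ``every $L_a$ is $\beta$-self-adjoint'' you conclude ``in particular $\tau$ is non-degenerate,'' and from that semi-simplicity and unitality. That inference is unjustified, and it is in fact false without a unit: take $J=\mathbb{R}$ with the zero product and $\beta(a,b)=ab$. Then $\beta$ is symmetric, positive definite and trivially associative, so $(J,\beta)$ is Euclidean in the paper's sense and every $L_a=0$ is self-adjoint, yet $\tau\equiv 0$. (This also shows that \ref{2} as literally stated needs the standing hypothesis that $J$ is unital, which is how the classical sources state it and how the paper uses it later.) Your argument is moreover circular at exactly this point: you need unitality to run the spectral resolution of \ref{3} (a Jordan frame must sum to $1$), but you obtain unitality from non-degeneracy of $\tau$, which itself can only be established via the spectral resolution through $\tau(x,x)=\sum_i\lambda_i^2\,\mathrm{tr}(L_{e_i})>0$. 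The repair is to assume $J$ unital and reorder the chain: self-adjointness of all $L_a$ $\Rightarrow$ spectral resolution of each element $\Rightarrow$ $\tau$ positive definite. A milder instance of the same issue sits in your converse of \ref{1}: the computation $\tau(1,1)=\dim I_i$ silently uses that each simple summand is unital, which is a nontrivial classical theorem (Albert); in a proof that openly leans on the classical literature this is acceptable, but it should be cited rather than used tacitly.
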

    
    \begin{rmk}\label{idssss}
    	A direct consequence of \ref{1} of Proposition \ref{euclijor} is that every (non-trivial) ideal of a semi-simple Jordan algebra is semi-simple.
    \end{rmk}

    \begin{rmk}
    	\ref{1} and \ref{2} of Proposition \ref{euclijor} are no longer true in the super case. Consider the Jordan superalgebra $K_3$ defined by $(K_3)_0 = \mathbb{R} e$, $(K_3)_1 = \mathbb{R} x \oplus \mathbb{R} y$ with $\{e,e\}=e$, $\{e,x\}=\frac{1}{2}x$, $\{e,y\}=\frac{1}{2}y$, and $\{x,y\}=e$. $K_3$ is called the Kaplansky superalgebra. $K_3$ is simple. However, its canonical bilinear form $\tau$ is zero. Moreover, there exists a positive definite associative bilinear form $\beta$ on $K_3$ defined by setting
    	\begin{align*}
    		\beta(e,e)=1, \quad \beta(e,x)=\beta(e,y)=0,\quad \beta(x,y)=2.
    	\end{align*}
    	In other words, $K_3$ is Euclidean but not positive.
    \end{rmk}
    
    \begin{figure}[!h]
    	\centering
    	\begin{subfigure}{0.4\textwidth}
    		\begin{tikzpicture}[set/.style={fill=orange, opacity = 0.1, text opacity = 1}]  		
    			\node at (0,2.5) {pseudo-Euclidean};
    			\node at (0,1.5) {Semi-simple};
    			\node at (0,0) {Positive/Euclidean};
    			
    			\draw[set,
    			rotate =0] (0,0) ellipse (3.3cm and 3.3cm);
    			\draw[set,
    			rotate =0] (0,0) ellipse (2.5cm and 2cm);
    			\draw[set,
    			rotate =0] (0,0) ellipse (2cm and 1cm);    		
    		\end{tikzpicture}
    		\caption{Non-super case.}
    	\end{subfigure}
    	\hspace{1cm}
    	 \begin{subfigure}{0.4\textwidth}
    	 	\begin{tikzpicture}[set/.style={fill=cyan, opacity = 0.1, text opacity = 1}]  		
    	 		\node at (0,2.7) {pseudo-Euclidean};
    	 		\node at (0,-1.8) {Semi-simple};
    	 		\node at (0,1.8) {Euclidean};
    	 		\node at (0,0) {Positive};
    	 		
    	 		\node at (1.5,1.2) {$K_3$};
    	 		\filldraw (1.2,1.5) circle [radius=0.05cm];
    	 		
    	 		\draw[set,
    	 		rotate =0] (0,0) ellipse (3.3cm and 3.3cm);
    	 		\draw[set,
    	 		rotate =0] (0,-0.5) ellipse (2cm and 1.8cm);
    	 		\draw[set,
    	 		rotate =0] (0,0.5) ellipse (2cm and 1.8cm);
    	 		\draw[set,
    	 		rotate =0] (0,0) circle (1cm);    		
    	 	\end{tikzpicture}
    	 	\caption{Super case.}
    	 \end{subfigure}
    	 \caption{Venn diagram showing the relationship between pseudo-Euclidean, Euclidean, semi-simple, and positive Jordan (super)algebras.}
    \end{figure}
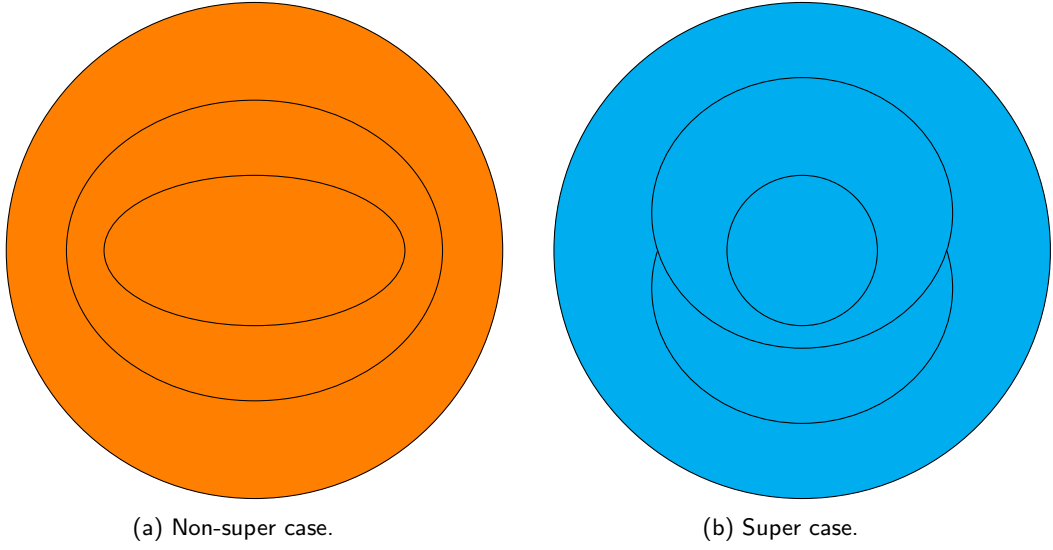
    
     \begin{prop}\label{eucuni}
    	Every positive Jordan superalgebra $J$ is unital.
    \end{prop}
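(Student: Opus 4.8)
The plan is to split the argument into two stages: first produce a unit $e$ for the even part $J_0$, and then show that this same $e$ is a unit for all of $J$.

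\emph{Stage 1.} Since the super Jordan identity restricts on $J_0$ to the ordinary Jordan identity, $J_0$ is an ordinary Jordan algebra. For $a\in J_0$ one has $\tau_0(a,a)=\mathrm{str}(L_{a^2})$, so positivity of $\tau_0$ gives $\mathrm{str}(L_{a^2})>0$ for every nonzero $a\in J_0$. From this I would deduce that $J_0$ is \emph{formally real}: if $\sum_k a_k^2=0$ with $a_k\in J_0$, then $0=\mathrm{str}(L_{\sum_k a_k^2})=\sum_k\tau_0(a_k,a_k)$, and since each summand is non-negative and vanishes only for $a_k=0$, all the $a_k$ vanish. A finite-dimensional formally real Jordan algebra admits a positive-definite associative form, hence is positive in the sense of Proposition~\ref{euclijor}; by that proposition it is unital. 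Let $e\in J_0$ be its unit.

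\emph{Stage 2.} Now $e$ is an even idempotent of $J$, so I would apply the Peirce decomposition $J=P_0\oplus P_{1/2}\oplus P_1$ of Proposition~\ref{idemdecomp}. Because $e$ is the unit of $J_0$ we have $L_e|_{J_0}=\mathrm{id}$, so $J_0\subseteq P_1$ and the spaces $P_0,P_{1/2}$ are purely odd. It then suffices to prove $P_0=P_{1/2}=0$, since this forces $J=P_1$, i.e. $L_e=\mathrm{id}_J$. I would show that both spaces lie in the radical $\mathrm{rad}(\tau)=\{x:\tau(x,J)=0\}$, which is trivial because $\tau$ is non-degenerate for a positive superalgebra. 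For $P_0$: it is a purely odd subalgebra, so $\{P_0,P_0\}\subseteq P_0\cap J_0=0$, while $\{P_0,P_1\}=0$ by Proposition~\ref{idemdecomp} and $\{P_0,P_{1/2}\}\subseteq P_{1/2}$ is odd; since left multiplication by an odd element has vanishing supertrace, $\tau(P_0,J)=0$. For $P_{1/2}$: associativity gives, for $z,z'\in P_{1/2}$, $\tfrac{1}{2}\tau(z,z')=\tau(\{e,z\},z')=\tau(e,\{z,z'\})=\mathrm{str}(L_{\{z,z'\}})=\tau(z,z')$, so $\tau$ vanishes on $P_{1/2}\times P_{1/2}$. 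Setting $w=\{z,z'\}\in J_0$, associativity then yields $\tau_0(w,w)=\tau(z,\{z',w\})=0$ because $\{z',w\}\in\{P_{1/2},P_1\}\subseteq P_{1/2}$, and positive-definiteness forces $w=0$, i.e. $\{P_{1/2},P_{1/2}\}=0$. Combined with the Peirce rules (which send the remaining odd$\times$odd products $\{P_{1/2},P_1\}$ and $\{P_{1/2},P_0\}$ into $P_{1/2}\cap J_0=0$) this gives $\{P_{1/2},J_1\}=0$, whence $\tau(P_{1/2},J)=0$. Thus $P_0,P_{1/2}\subseteq\mathrm{rad}(\tau)=0$.

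The main obstacle is Stage 2. Positive-definiteness of $\tau_0$ alone does \emph{not} suffice to eliminate the odd Peirce spaces: the algebra $J_0=\mathbb{R}e$, $J_1=\mathbb{R}z$ with $e^2=e$, $\{e,z\}=\tfrac{1}{2}z$, $\{z,z\}=0$ is a Jordan superalgebra whose $\tau_0$ is positive definite yet which has no unit (and whose $\tau$ is degenerate). What rescues the argument is the interplay of two distinct uses of the hypothesis: positive-definiteness of $\tau_0$ collapses $\{P_{1/2},P_{1/2}\}$ to zero, while non-degeneracy of the full form $\tau$ (equivalently, semi-simplicity of $J$) is what finally kills the radical-like spaces $P_0$ and $P_{1/2}$. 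Isolating precisely where each hypothesis enters, and verifying the super Peirce multiplication rules with the correct signs, is the delicate point.
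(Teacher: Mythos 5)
Your argument is correct, and it reaches the conclusion by a genuinely different (and longer) route than the paper. The paper's proof is three lines: having obtained the unit $1$ of $J_0$ (via \ref{2} of Proposition~\ref{euclijor} and Koecher), it computes, for $a,b\in J_1$,
\begin{align*}
\tau(1a-a,b)=\tau(1,ab)-\tau(a,b)=\mathrm{str}(L_{1(ab)})-\mathrm{str}(L_{ab})=0,
\end{align*}
using associativity of $\tau$ and $1(ab)=ab$ (since $ab\in J_0$), and then invokes non-degeneracy of $\tau$ to conclude $1a=a$. Your Stage 2 proves exactly the same fact in Peirce language: since $L_1$ acts as $0,\tfrac12,1$ on $P_0,P_{1/2},P_1$ and $J_0\subseteq P_1$, one has $(\mathrm{id}-L_1)(J_1)=P_0\oplus P_{1/2}$, and your case-by-case computation $\tau(P_0\oplus P_{1/2},J)=0$ is the paper's one-line display unpacked through the Peirce multiplication rules. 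What your route buys is extra structural output — notably $\{P_{1/2},P_{1/2}\}=0$, which is where positive-definiteness of $\tau_0$ enters beyond parity bookkeeping; what it costs is that no Peirce decomposition is actually needed. Your Stage 1 is also heavier than necessary: there is no need to pass through formal reality, since $\tau|_{J_0}$ is itself a positive-definite associative form, so $(J_0,\tau|_{J_0})$ is Euclidean and \ref{2} of Proposition~\ref{euclijor} applies directly (this is what the paper does).

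Your closing observation is the most valuable part, and it is sharper than the paper. Both your proof and the paper's hinge on non-degeneracy of the \emph{full} form $\tau$, and this does not follow from the stated definition of positivity ($\tau$ associative, $\tau_0$ positive definite): your subalgebra $\mathbb{R}e\oplus\mathbb{R}z$ of $K_3$ satisfies both conditions ($\tau(e,e)=\tfrac12>0$, while $\tau|_{J_1}\equiv 0$) yet is not unital, so the Proposition is false under a literal reading of ``positive.'' The paper's proof simply asserts ``since $\tau$ is non-degenerate'' with no justification, whereas consistency with the Venn diagram (Positive contained in Semi-simple) indicates the definition is meant to include non-degeneracy of $\tau$; under that reading your proof, like the paper's, is complete. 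Identifying this hidden hypothesis, with an explicit counterexample, is a genuine contribution rather than a defect of your write-up.
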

    \begin{proof}
    	If $J$ is a positive Jordan superalgebra, by \ref{2} of Proposition \ref{euclijor}, the Jordan algebra $J_0$ is also postive, hence possesses a unit element $1$ \cite{Koecher99}. Let $a, b \in J_1$, we have
    	\begin{align*}
    		\tau(1a-a,b)=\tau(1,ab)-\tau(a,b)=\mathrm{str}(L_{1(ab)})-\mathrm{str}(L_{ab})=0.
    	\end{align*}
    	Since $\tau$ is non-degenerate, $1 \in J_0$ is also the unit element of $J$.
    \end{proof}
    \begin{rmk}
    	Euclidean Jordan superalgebras are not necessarily unital. (E.g., $K3$.) 
    \end{rmk}

    
    \begin{prop}\label{supereuclidecomp}
    	Every unital Euclidean Jordan superalgebra $(J,\beta)$ admits a (unique) decomposition $J=\bigoplus_i I_i$, where $I_i$ are simple Euclidean ideals of $J$.
    \end{prop}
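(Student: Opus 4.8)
The plan is to reduce the statement to \Cref{betadecomp} together with a single structural fact: in a \emph{unital} Euclidean Jordan superalgebra every ideal is $\beta$-non-degenerate, so that ``$\beta$-irreducible'' automatically upgrades to ``simple''. First I would apply \Cref{betadecomp} to get a $\beta$-orthogonal decomposition $J=\bigoplus_i I_i$ into $\beta$-irreducible ideals with $\beta(I_i,I_j)=0$ for $i\neq j$. Since the sum is direct and each $I_i$ is an ideal, $\{I_i,I_j\}\subseteq I_i\cap I_j=\{0\}$ for $i\neq j$, so this is in fact an algebra direct sum; writing $1=\sum_i e_i$ with $e_i\in I_i$ then shows that each $e_i$ is a unit of $I_i$, while restricting the positive definite $\beta_0$ shows that each $(I_i,\beta|_{I_i})$ is again unital and Euclidean. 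Because the decomposition is an algebra direct sum, every ideal of $I_i$ is automatically an ideal of $J$, so it suffices to prove that each $\beta$-irreducible summand $L:=I_i$ is simple.

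For the core step I would fix a nonzero proper ideal $K\trianglelefteq L$ and show it must be $\beta$-non-degenerate, contradicting $\beta$-irreducibility of $L$. Let $R:=K\cap K^{\perp}$ be the radical of $\beta|_K$, with $K^{\perp}$ the orthogonal complement inside $L$; exactly as in the proof of \Cref{betadecomp}, associativity of $\beta$ makes $K^{\perp}$ an ideal, and since $\beta$ is even $K^\perp$ is graded, so $R$ is a graded ideal of $L$. The argument then runs in four moves. (i) \emph{$R$ is purely odd:} if $x\in R\cap L_0$ then $\beta(x,x)=0$ because $x\in K\cap K^{\perp}$, but $\beta_0$ is positive definite, forcing $x=0$; hence $R\subseteq L_1$. (ii) \emph{The odd part annihilates $R$:} as $R$ is an ideal contained in $L_1$, one has $\{L_1,R\}\subseteq R\cap L_0=\{0\}$. (iii) \emph{The even part annihilates $R$:} for $a\in L_0$, $z\in R$, $w\in L_1$, associativity gives $\beta(\{a,z\},w)=\beta(a,\{z,w\})=0$ since $\{z,w\}\in\{R,L_1\}=\{0\}$, while $\{a,z\}$ is odd and so pairs trivially with $L_0$; thus $\beta(\{a,z\},L)=0$, and nondegeneracy of $\beta|_L$ gives $\{a,z\}=0$, i.e.\ $\{L_0,R\}=\{0\}$. (iv) Combining (ii) and (iii) yields $R\subseteq\mathrm{Ann}(L)$; but $L$ is unital, so $z=\{1,z\}=0$ for every $z\in R$, whence $R=\{0\}$. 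Therefore $K$ is $\beta$-non-degenerate, and $\beta$-irreducibility forces $K=L$, so $L$ has no nonzero proper ideals and is simple.

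Uniqueness I would then settle in the standard way: any simple ideal of $J$ has trivial product with all but one summand $I_j$ (using $\{1,\cdot\}$ and the annihilation relations), into which it is therefore absorbed, so the simple summands are precisely the minimal ideals of $J$ and are intrinsically determined. The step I expect to be the genuine obstacle is move (iii): this is where the Euclidean hypothesis (positivity of $\beta_0$, already used in (i) to make $R$ odd), the associativity and nondegeneracy of $\beta$, and the grading must all be combined correctly, and it is precisely the place where unitality becomes indispensable in (iv)—without a unit the radical could survive inside a nonzero annihilator, so the proof genuinely uses the ``unital'' assumption and not merely ``Euclidean''.
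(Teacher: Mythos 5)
Your proof is correct, and its core step is genuinely different from the paper's. Both arguments share the same skeleton: apply Proposition \ref{betadecomp} to get a $\beta$-orthogonal decomposition into $\beta$-irreducible ideals, note that each summand is unital and Euclidean, and then show that in such a summand every nonzero ideal $K$ is $\beta$-non-degenerate, so that $\beta$-irreducibility forces $K=I_i$. Where you diverge is in how this non-degeneracy is established. The paper invokes the classical structure theory of positive Jordan algebras: $(I_i)_0$ is positive by Proposition \ref{euclijor}, so $K_0$ is a semi-simple ideal (Remark \ref{idssss}) with a unit $e$; one then takes the Peirce decomposition of $I_i$ with respect to $e$, shows $K=P_{\frac{1}{2}}\oplus P_1$, and gets non-degeneracy of $\beta|_K$ from the orthogonality of Peirce spaces (Remark \ref{pdor}) together with a unit-pairing argument $\beta(y,z)=\beta(1|_{I_i},\{y,z\})$. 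You instead kill the radical $R=K\cap K^{\perp}$ directly: positivity of $\beta_0$ makes $R$ purely odd, parity plus the ideal property gives $\{L_1,R\}=0$, associativity and non-degeneracy then give $\{L_0,R\}=0$, and unitality finishes via $R\subseteq\mathrm{Ann}(L)=\{0\}$. Your route is more elementary and self-contained — it needs no Peirce decomposition, no Proposition \ref{euclijor}, no Koecher-style existence of units in $K_0$, and no implicit case distinction on whether $K_0\neq\{0\}$ (which the paper's choice of $e$ quietly presupposes). What the paper's route buys in exchange is structural information: it identifies the ideal $K$ explicitly as a Peirce component $P_{\frac{1}{2}}\oplus P_1$ of an idempotent, tying the result to the Peirce machinery used throughout the rest of the paper. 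Both proofs use unitality at exactly one essential point, as you correctly flag in your step (iv).
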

    \begin{proof}
    	Consider the decomposition $J=\bigoplus_i I_i$ in Proposition \ref{betadecomp}. Each $I_i$ equipped with $\beta|_{I_i}$ is Euclidean. By \ref{2} of Proposition \ref{euclijor}, the even part $(I_i)_0$ of $I_i$ positive.
    	
    	Let $K$ be a nonzero ideal of $I_i$. The even part $K_0$ of $K$ is an ideal of $(I_i)_0$. By Remark \ref{idssss}, $K_0$ is semi-simple and has a unit element $e$. In general, $e$ is an idempotent element in $I_i$.  Consider the Peirce decomposition $I_i=P_0 \oplus P_{\frac{1}{2}} \oplus P_1$ associated to $e$. Let $x \in P_{\lambda}$, $\lambda = \frac{1}{2}, 1$. We have $x = \frac{1}{\lambda} \{e,x\}$. Thus, $P_{\frac{1}{2}} \oplus P_1 \subset K$. On the other hand, $K_0 \subset P_1$ since $\{e,x\}=x$ for all $x \in K_0$. We conclude that 
    	$
    		(P_{\frac{1}{2}})_0=\{0\},~ K_0 = (P_1)_0, 
    	$
    	which then imply that 
    	$
    		(I_i)_0=K_0 \oplus (P_0)_0.
    	$
    	By Remark \ref{pdor}, $\beta|_{K_0}$ must be non-degenerate. 
    	
    	Suppose that $K_1 \cap (P_0)_1$ is non-empty. Since $\beta|_{I_i}$ is non-degenerate, for a nonzero element $y \in K_1 \cap (P_0)_1$, one can find an element $z \in (P_0)_1$ such that $\beta(y,z) \neq 0$. By Proposition \ref{idemdecomp}, $\{y,z\} \in (P_0)_0$. Since $K_0=(P_1)_0$, $\{y,z\} = 0$. But this cannot be because $0 \neq \beta(y,z) = \beta(1|_{I_i},\{y,z\})$, where $1|_{I_i}$ is the unit element of $I_i$. Therefore, $K_1 = (P_{\frac{1}{2}} \oplus P_1)_1$ and $\beta|_{K_1}$ is also non-degenerate. By Proposition \ref{betadecomp}, $K$ must be $I_i$ itself.
    	
    	The proof of the uniqueness of this decomposition is standard.
    \end{proof}

    \begin{cor}
    	The structure Lie superalgebra of a unital Euclidean Jordan superalgebra $(J,\beta)$ is the direct product of the structure Lie superalgebras of the simple ideals of $J$.
    \end{cor}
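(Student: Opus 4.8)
The plan is to exhibit an explicit isomorphism $\mathfrak{g}(J) \cong \prod_i \mathfrak{g}(I_i)$ built from the decomposition $J = \bigoplus_i I_i$ of Proposition \ref{supereuclidecomp}, exploiting the block structure forced by the orthogonality $\{I_i, I_j\} = 0$ for $i \neq j$. First I would record two preliminary facts. Since each $I_i$ is an ideal with $\{I_i, I_j\} = 0$ for $i \neq j$, left multiplication $L_a$ by an element $a \in I_i$ annihilates every $I_j$ with $j \neq i$ and preserves $I_i$; thus $L_a$ is block-diagonal, agreeing with the left multiplication $L_a^{I_i}$ computed inside $I_i$ on the block $I_i$ and vanishing elsewhere. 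Second, writing the unit as $1 = \sum_i 1_i$ with $1_i \in I_i$, one checks $\{1_i, x\} = x$ for $x \in I_i$ because the cross terms drop out by orthogonality; hence each $I_i$ is itself unital, so Corollary \ref{decompofgj} applies both to $J$ and to every $I_i$, giving $\mathfrak{g}(J) = \mathfrak{m}_J \oplus [\mathfrak{m}_J, \mathfrak{m}_J]$ and $\mathfrak{g}(I_i) = \mathfrak{m}_{I_i} \oplus [\mathfrak{m}_{I_i}, \mathfrak{m}_{I_i}]$.

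Next I would assemble the decomposition at the level of the structure superalgebra. Extending each operator on $I_i$ by zero on the complementary summands embeds $\mathrm{gl}(I_i)$ into $\mathrm{gl}(J)$ as the corresponding diagonal block; by the first preliminary fact this embedding carries $L_a^{I_i}$ to $L_a$ and hence restricts to a homomorphism $\mathfrak{g}(I_i) \hookrightarrow \mathfrak{g}(J)$, injective since an operator is determined by its action on the block. Decomposing $a = \sum_i a_i$ gives $L_a = \sum_i L_{a_i}$, so $\mathfrak{m}_J = \bigoplus_i \mathfrak{m}_{I_i}$ under these embeddings. The crucial computation is that operators supported on distinct blocks commute: for $a \in I_i$, $b \in I_j$ with $i \neq j$, both $L_a L_b$ and $L_b L_a$ vanish on every summand (on $I_i$ and $I_j$ one factor already produces $0$ by orthogonality, and on the remaining summands both factors vanish), whence $[L_a, L_b] = 0$. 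Bilinearity then yields $[\mathfrak{m}_J, \mathfrak{m}_J] = \bigoplus_i [\mathfrak{m}_{I_i}, \mathfrak{m}_{I_i}]$, and combining with the $\mathfrak{m}_J$-decomposition gives $\mathfrak{g}(J) = \bigoplus_i \mathfrak{g}(I_i)$ as super vector spaces.

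Finally I would upgrade this vector-space splitting to a direct product of Lie superalgebras. The same disjoint-block observation shows that the superbracket of any element of $\mathfrak{g}(I_i)$ with any element of $\mathfrak{g}(I_j)$ vanishes when $i \neq j$, since both are supported on non-overlapping blocks; thus the summands are mutually commuting ideals and $\mathfrak{g}(J) = \prod_i \mathfrak{g}(I_i)$. I expect the main obstacle to be bookkeeping rather than conceptual: one must verify that the extension-by-zero embeddings are compatible with the superbracket on $\mathrm{gl}(J)$ and that Corollary \ref{decompofgj} is genuinely available for each $I_i$, which is exactly what the unitality check above secures. Given the block-diagonal structure, every bracket computation reduces to the vanishing of cross terms, so no identity beyond Lemma \ref{kacfor1} (already packaged into Corollary \ref{decompofgj}) is required.
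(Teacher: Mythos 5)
Your proposal is correct and takes essentially the approach the paper intends: the corollary is stated there without proof as a direct consequence of Proposition \ref{supereuclidecomp}, and your argument --- splitting $J$ into the simple ideals $I_i$, observing that left multiplications by elements of distinct ideals are supported on disjoint diagonal blocks and hence commute, and applying Corollary \ref{decompofgj} to $J$ and to each (unital) $I_i$ to identify $\mathfrak{g}(J)=\bigoplus_i\mathfrak{g}(I_i)$ as a product of mutually commuting ideals --- is exactly the fleshed-out version of that consequence. The one fact you assert without justification, $\{I_i,I_j\}=\{0\}$ for $i\neq j$, is immediate since the product of two ideals in a direct sum lies in their intersection, $\{I_i,I_j\}\subseteq I_i\cap I_j=\{0\}$.
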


    \begin{prop}
    	Let $(J,\beta)$ be a unital pseudo-Euclidean Jordan superalgebra with positive even part. For every $x \in J_0$,  there exists a Jordan frame $\{e_i\}_{i=1}^r$ such that $x = \sum_{i=1}^r \lambda_i e_i$. 
    	The tuple $\{\lambda_i\}_{i=1}^r$ is defined only up to permutations of the entries.
    \end{prop}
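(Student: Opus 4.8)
The plan is to reduce everything to the even part $J_0$ and to invoke the classical spectral theorem for Euclidean Jordan algebras. First I would record that $J_0$ is an ordinary (non-super) Jordan algebra: it is commutative, and the Jordan identity for $J_0$ is exactly the relation $[L_x,L_{\{x,x\}}]=0$ extracted in the first remark after the definition of a Jordan superalgebra, evaluated on $y\in J_0$. Because $J$ is unital, its unit $1$ is even and restricts to the unit of $J_0$. The hypothesis of positive even part says that $J_0$ is a positive Jordan algebra (equivalently, that $(J_0,\beta_0)$ is Euclidean, via \ref{2} of Proposition \ref{euclijor}). Hence I may apply \ref{3} of Proposition \ref{euclijor} to the positive Jordan algebra $J_0$: for the given $x\in J_0$ this yields a Jordan frame $\{e_i\}_{i=1}^r$ of $J_0$ and reals $\lambda_i$ with $x=\sum_{i=1}^r\lambda_i e_i$.

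The substantive step is to promote this Jordan frame of $J_0$ to a Jordan frame of the superalgebra $J$. Orthogonality $\{e_i,e_j\}=\delta_{ij}e_i$ and completeness $\sum_i e_i=1_{J_0}=1$ carry over verbatim, so the only thing to verify is that each $e_i$, primitive as an idempotent of $J_0$, stays primitive as an idempotent of $J$; this is where the super structure could interfere, since an idempotent of $J$ may a priori carry an odd component. I would dispose of this as follows. If $g\in J$ satisfies $g^2=g$, write $g=g_0+g_1$; since every odd element squares to zero (commutativity gives $\{g_1,g_1\}=-\{g_1,g_1\}$, hence $\{g_1,g_1\}=0$), comparing even parts of $g^2=g$ gives $g_0^2=g_0$, so the even part of any idempotent is again an idempotent, while a purely odd idempotent vanishes. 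Therefore any decomposition $e_i=g+g'$ into idempotents of $J$ descends, on even parts, to a decomposition $e_i=g_0+g_0'$ into idempotents of $J_0$; primitivity of $e_i$ in $J_0$ forces one summand, say $g_0'$, to vanish, whence $g'$ is purely odd and thus zero. Consequently $e_i$ is primitive in $J$, and $\{e_i\}_{i=1}^r$ is a Jordan frame of $J$ realizing $x=\sum_i\lambda_i e_i$.

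For uniqueness up to permutation I would argue entirely inside $J_0$. Any two frames witnessing spectral decompositions of $x$ are in particular Jordan frames of the positive Jordan algebra $J_0$, so the uniqueness part of the classical spectral theorem for Euclidean Jordan algebras (see \cite{Koecher99}) applies: the distinct values among the $\lambda_i$ are the roots of the minimal polynomial of $x$ in the associative subalgebra $\mathbb{R}[x]\subset J_0$, and the multiplicity of each value is the rank of the corresponding spectral idempotent. Both data are frame-independent, so the multiset $\{\lambda_i\}_{i=1}^r$ is determined by $x$.

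I expect the only genuine obstacle to be the primitivity transfer, namely ruling out that a primitive idempotent of $J_0$ could split off an odd piece inside $J$; the observation $\{g_1,g_1\}=0$ together with the even/odd decomposition argument above is precisely what renders this harmless, and everything else is bookkeeping plus the citation to the classical Euclidean spectral theorem.
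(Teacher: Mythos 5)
Your proof is correct and follows essentially the same route as the paper, whose entire argument is to apply \ref{3} of Proposition \ref{euclijor} to the positive Jordan algebra $J_0$; your extra verifications (that the unit of $J_0$ is the unit of $J$, that primitivity transfers because odd elements square to zero, and the uniqueness of the spectral multiset) merely make explicit what the paper leaves implicit, and under the paper's conventions --- idempotents are by definition elements of $J_0$ --- the frame transfer is in fact automatic. One slip, harmless because it is never used: your parenthetical assertion that positivity of $J_0$ is equivalent to $(J_0,\beta_0)$ being Euclidean misreads \ref{2} of Proposition \ref{euclijor}, which only guarantees the existence of \emph{some} Euclidean form on $J_0$; the restriction $\beta_0$ of the given $\beta$ need not be positive definite, as the paper's Example \ref{dt} shows ($D(t)$ with $t<0$ has positive even part, yet $\beta_0$ has signature $(1,1)$).
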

    \begin{proof}
    	This follows directly from \ref{3} of Proposition \ref{euclijor}.
    \end{proof}
    
    \section{Jordan distributions}
    
    In this section, we study the generalized distribution over the dual $J^*$ of a unital Jordan superalgebra $J$ induced by the action of the structure Lie supergroup $G(J)$ on $J^*$.
    
    \subsection{Main theorems}
    
    For convenience, let's first consider the $G(J)$-action on $J$. To compute the tangent space of a $G(J)$-orbit $\mathcal{O}$ at $x \in J_0$, we need the following lemma, which is a generalization of \cite{Ciaglia23}*{Lemma 3}.
    \begin{lem}
    	Let $x = \sum_{i=1}^r \lambda_i e_i \in J_0$ with $\{e_i\}_{i=1}^r$ be a Jordan frame. Then
    	\begin{align*}
    		\mathfrak{m}_J \cdot x = \bigoplus_{\lambda_i + \lambda_j \neq 0} P_{ij}, \quad
    		\mathrm{Der}_0(J) \cdot x = \bigoplus_{\lambda_i - \lambda_j \neq 0} P_{ij}, \quad
    		\mathfrak{g}(J) \cdot x = \bigoplus_{\lambda_i \neq 0~\mathrm{or}~\lambda_j \neq 0} P_{ij},
    	\end{align*}
        where the direct sums are implicitly assumed to be taken over $1 \leq i \leq j \leq r$.
    \end{lem}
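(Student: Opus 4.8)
The plan is to handle the three identities in turn, using the vector-space decomposition $\mathfrak{g}(J)=\mathfrak{m}_J\oplus\mathrm{Der}_0(J)$ furnished by Corollary~\ref{decompofgj} and Lemma~\ref{unider}, together with the Peirce calculus of Proposition~\ref{peirce} and the frame relations $\{e_i,e_j\}=\delta_{ij}e_i$, $\sum_i e_i=1$.

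For the first identity I would note that, $x$ being even, $L_a(x)=\{a,x\}=\{x,a\}=L_x(a)$, so $\mathfrak{m}_J\cdot x$ is exactly the image of $L_x=\sum_k\lambda_k L_{e_k}$. Since $L_{e_k}$ acts on $P_{ij}$ as the scalar $\tfrac12$ when $k\in\{i,j\}$ (and as $1$ on $P_{ii}$), the operator $L_x$ acts on each $P_{ij}$ as multiplication by $\tfrac12(\lambda_i+\lambda_j)$. Its image is therefore the sum of those $P_{ij}$ on which this scalar is nonzero, which is precisely $\bigoplus_{\lambda_i+\lambda_j\neq 0}P_{ij}$.

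For the second identity I would argue by two inclusions. For $\subseteq$, the key observation is that every $D\in\mathrm{Der}_0(J)$ annihilates the unit, so $\sum_k D(e_k)=D(1)=0$; moreover, applying $D$ to $e_k^2=e_k$ yields $L_{e_k}(D(e_k))=\tfrac12 D(e_k)$, i.e.\ $D(e_k)\in\bigoplus_{l\neq k}P_{kl}$. Extracting the $P_{ij}$-component of $D(x)=\sum_k\lambda_k D(e_k)$ then leaves only the terms $k=i,j$, and the relation $[D(e_i)]_{ij}+[D(e_j)]_{ij}=0$ coming from $D(1)=0$ collapses this component to $(\lambda_i-\lambda_j)[D(e_i)]_{ij}$, which vanishes whenever $\lambda_i=\lambda_j$. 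For $\supseteq$, I would exhibit explicit inner derivations: using the associator identity $[L_a,L_c](b)=(-1)^{|b||c|}[a,b,c]$ from Lemma~\ref{kacfor1} (cf.\ the proof of Lemma~\ref{unider}) and the Peirce rules, a direct computation gives $[L_z,L_{e_i}](x)=\tfrac14(\lambda_i-\lambda_j)\,z$ for every $z\in P_{ij}$ with $i\neq j$. When $\lambda_i\neq\lambda_j$ this realizes all of $P_{ij}$ inside $\mathrm{Der}_0(J)\cdot x$, completing the identity.

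The third identity then follows formally: since $\mathfrak{g}(J)\cdot x=\mathfrak{m}_J\cdot x+\mathrm{Der}_0(J)\cdot x$ and both summands are spanned by Peirce blocks, $P_{ij}$ lies in $\mathfrak{g}(J)\cdot x$ iff $\lambda_i+\lambda_j\neq 0$ or $\lambda_i\neq\lambda_j$; both conditions fail only when $\lambda_i=\lambda_j=0$, so membership is equivalent to $\lambda_i\neq 0$ or $\lambda_j\neq 0$. I expect the main obstacle to be the $\supseteq$ direction of the second identity: the associator evaluation $[L_z,L_{e_i}](x)=\tfrac14(\lambda_i-\lambda_j)z$ must be pushed carefully through the Peirce relations (tracking the $\delta$-terms from $\{e_k,e_i\}$ and the $\tfrac12$-eigenvalues of $z$), and it is this exact diagonalization that makes the statement work; everything else reduces to bookkeeping with the eigenvalues of $L_x$ and the identity $D(1)=0$.
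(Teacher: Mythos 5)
Your proof is correct — both computations that carry the load check out: $[L_z,L_{e_i}](x)=\tfrac14(\lambda_i-\lambda_j)z$ for $z\in P_{ij}$, $i\neq j$, is exactly right, and the collapse $[D(x)]_{ij}=(\lambda_i-\lambda_j)[D(e_i)]_{ij}$ follows as you say from $D(1)=0$ and $L_{e_k}(D(e_k))=\tfrac12 D(e_k)$. The first identity and the formal deduction of the third coincide with the paper (the paper in fact leaves the third implicit, resting on $\mathfrak{g}(J)=\mathfrak{m}_J\oplus\mathrm{Der}_0(J)$ from Corollary~\ref{decompofgj}, which you spell out). Where you genuinely diverge is the inclusion $\mathrm{Der}_0(J)\cdot x\subseteq\bigoplus_{\lambda_i\neq\lambda_j}P_{ij}$: the paper expands an arbitrary inner derivation over Peirce components and computes $[L_{x_{ij}},L_{y_{kl}}](x)=\tfrac12(\lambda_k+\lambda_l-\lambda_i-\lambda_j)\{x_{ij},y_{kl}\}$, then runs a four-case analysis through the multiplication rules of Proposition~\ref{peirce}; you instead argue abstractly from the derivation property, using only $D(1)=0$ and the fact that $D(e_k)$ lies in the $\tfrac12$-eigenspace of $L_{e_k}$. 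Your route avoids the case analysis entirely and proves something slightly stronger, since it applies verbatim to every derivation, not just inner ones, so it shows $\mathrm{Der}(J)\cdot x\subseteq\bigoplus_{\lambda_i\neq\lambda_j}P_{ij}$; the paper's computation, by contrast, produces the explicit blockwise formula for $[L_{x_{ij}},L_{y_{kl}}](x)$, which is more information than the containment alone. For the reverse inclusion the two arguments are the same in spirit but use different witnesses: the paper inverts $[L_x,L_{x_{ij}}](x)=-\tfrac14(\lambda_i-\lambda_j)^2x_{ij}$, while you invert $[L_z,L_{e_i}](x)=\tfrac14(\lambda_i-\lambda_j)z$; both are legitimate inner derivations by Lemma~\ref{unider}, and yours has the minor aesthetic advantage of a first-order rather than squared eigenvalue factor.
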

    \begin{proof}
    	The first equality follows from $L_{x_{ij}}(x)=\frac{1}{2}(\lambda_i + \lambda_j) x_{ij}$ for $x_{ij} \in P_{ij}$. For the second equality, we have
    	\begin{align}\label{innperx}
    		[L_{x_{ij}},L_{y_{kl}}](x) &= \{x_{ij}, \frac{1}{2}(\lambda_k + \lambda_l)\} -(-1)^{|x_{ij}||y_{kl}|}\{y_{kl}, \frac{1}{2}(\lambda_i + \lambda_j) x_{ij}\} \\
    		&= \frac{1}{2}(\lambda_k + \lambda_l - \lambda_i - \lambda_j)\{x_{ij},y_{kl}\}
    	\end{align}
        for $x_{ij} \in P_{ij}$ and $y_{kl} \in P_{kl}$. By Proposition \ref{peirce}, $\{x_{ij},y_{kl}\}$ can be nonzero only in the following cases.
        \begin{enumerate}
        	\item $i = j = k = l$, $[L_{x_{ij}},L_{y_{kl}}](x)=\frac{1}{2}(\lambda_i + \lambda_i - \lambda_i - \lambda_i)\{x_{ii},y_{ii}\}=0$;
        	\item $i=j=k \neq l$, $[L_{x_{ij}},L_{y_{kl}}](x)=\frac{1}{2}(\lambda_i + \lambda_l - \lambda_i - \lambda_i) \{x_{ii},y_{il}\} = \frac{1}{2}(\lambda_l - \lambda_i) \{x_{ii},y_{il}\} \in P_{il}$;
        	\item $i=k \neq j=l$, $[L_{x_{ij}},L_{y_{kl}}](x)=\frac{1}{2}(\lambda_i + \lambda_j - \lambda_i - \lambda_j)\{x_{ij},y_{ij}\}=0$;
        	\item $i \neq j = k \neq l$, $i \neq l$, $[L_{x_{ij}},L_{y_{kl}}](x)=\frac{1}{2}(\lambda_j + \lambda_l - \lambda_i - \lambda_i) \{x_{ij},y_{jl}\} = \frac{1}{2}(\lambda_l - \lambda_i) \{x_{ij},y_{jl}\} \in P_{il}$.
        \end{enumerate} 
        It follows that $\mathrm{Der}_0(J) \cdot x \subset \bigoplus_{\lambda_i - \lambda_j \neq 0} P_{ij}$. For the converse inclusion, we compute
        \begin{align*}
        	x_{ij}=-\frac{4}{(\lambda_i - \lambda_j)^2}[L_x, L_{x_{ij}}](x)
        \end{align*}
        when $\lambda_i - \lambda_j \neq 0$.
    \end{proof}
    
    Let $\mathcal{O}_0$ denote the underlying topological space of $\mathcal{O}$.
    \begin{prop}
    	For $x \in \mathcal{O}_0 \subset J_0$, $T_x \mathcal{O} \cong \bigoplus_{\lambda_i \neq 0~\mathrm{or}~\lambda_j \neq 0} P_{ij}$.
    \end{prop}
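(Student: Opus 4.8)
The plan is to reduce the statement to the Lemma immediately preceding it, by identifying $T_x\mathcal{O}$ with the value at $x$ of the infinitesimal orbit distribution and then observing that this value is exactly $\mathfrak{g}(J)\cdot x$. First I would invoke the general framework set up in Section~3: for a $G(J)$-orbit $\mathcal{O}$ and a point $x$ of its underlying space, the tangent space $T_x\mathcal{O}$ is canonically identified with the tangent space $T_x\mathcal{D}^{\mathfrak{g}(J)}=\mathfrak{D}^{\mathfrak{g}(J)}_x/\mathfrak{m}_x\mathfrak{D}^{\mathfrak{g}(J)}_x$ of the infinitesimal orbit distribution $\mathcal{D}^{\mathfrak{g}(J)}$. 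Since $x\in J_0$ lies in the positive Jordan algebra $J_0$, it admits a spectral decomposition $x=\sum_{i=1}^r\lambda_i e_i$ with $\{e_i\}$ a Jordan frame, so the hypotheses of the preceding Lemma are met and it remains only to compute $T_x\mathcal{D}^{\mathfrak{g}(J)}$ concretely.

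The key observation is that the $G(J)$-action on $J$ is \emph{linear}. Hence, for $\xi\in\mathfrak{g}(J)\subset\mathrm{gl}(J)$, the associated fundamental vector field $\lambda(\xi)$ is the linear vector field whose value at a point $y$ is $\xi(y)$, under the canonical identification $T_yJ\cong J=J_0\oplus J_1$. Because $x\in J_0$, an even $\xi\in\mathfrak{g}(J)_0$ contributes an even tangent vector $\xi(x)\in J_0$ and an odd $\xi\in\mathfrak{g}(J)_1$ an odd tangent vector $\xi(x)\in J_1$, so the supergrading is respected. Passing to the quotient $\mathfrak{D}^{\mathfrak{g}(J)}_x/\mathfrak{m}_x\mathfrak{D}^{\mathfrak{g}(J)}_x$, which records precisely the values of the spanning vector fields $\lambda(\xi)$ at $x$, therefore yields the super subspace $\{\xi(x):\xi\in\mathfrak{g}(J)\}=\mathfrak{g}(J)\cdot x$ of $T_xJ$. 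Applying the preceding Lemma, $\mathfrak{g}(J)\cdot x=\bigoplus_{\lambda_i\neq 0~\mathrm{or}~\lambda_j\neq 0}P_{ij}$, which is the claimed isomorphism.

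The step needing the most care is the second one: verifying that the super quotient $\mathfrak{D}^{\mathfrak{g}(J)}_x/\mathfrak{m}_x\mathfrak{D}^{\mathfrak{g}(J)}_x$ faithfully reproduces $\mathfrak{g}(J)\cdot x$ in its odd directions as well. The delicate point is that, in the super setting, a vector field is not determined by its tangent vectors over the body $M$, so one cannot simply equate the distribution with the pointwise image. What rescues the argument is that the fundamental fields $\lambda(\xi)$ have degree-one (linear) coefficients: the evaluation map $\xi\mapsto\lambda(\xi)(x)$ has kernel exactly the stabilizer subalgebra $\{\xi\in\mathfrak{g}(J):\xi\cdot x=0\}$, and the induced map $\mathfrak{g}(J)/\mathfrak{g}(J)_x\to T_x\mathcal{D}^{\mathfrak{g}(J)}$ is an isomorphism of super vector spaces. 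This is precisely what licenses replacing the abstract quotient by the explicit image $\mathfrak{g}(J)\cdot x$ and hence closing the proof.
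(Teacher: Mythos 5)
Your proposal is correct and takes essentially the same route as the paper, which states this proposition without a separate proof: it is meant as an immediate consequence of the identification $T_x\mathcal{O}\cong T_x\mathcal{D}^{\mathfrak{g}(J)}$ set up in Section~3 combined with the preceding Lemma's computation of $\mathfrak{g}(J)\cdot x$. Your write-up merely makes explicit the evaluation step $T_x\mathcal{D}^{\mathfrak{g}(J)}=\mathfrak{g}(J)\cdot x$ (linearity of the action and compatibility with the grading) that the paper leaves implicit.
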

    
    What is $\mathcal{O}_0$ in the super case? 
    By Corollary \ref{decompofgj}, we have
    \begin{align}\label{gj0}
    	\mathfrak{g}(J)_0 = (\mathfrak{m}_J \oplus [\mathfrak{m}_J,\mathfrak{m}_J])_0 = \mathfrak{g}(J_0) \oplus [\mathfrak{m}_{J_1},\mathfrak{m}_{J_1}].
    \end{align}
    Recall that elements in $ [\mathfrak{m}_{J},\mathfrak{m}_{J}]$ are the inner derivations of $J$.
    \begin{thm}
    	The underlying topological space $\mathcal{O}_0$ of a $G(J)$-orbit $\mathcal{O}$ coincides with the corresponding $G(J_0)$-orbit in $J_0$. 
    \end{thm}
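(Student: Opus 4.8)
The plan is to reduce the claim to an equality of tangent distributions on the ordinary manifold $J_0$, and then invoke the uniqueness of maximal integral manifolds. First I would pin down $\mathcal{O}_0$. Since $\mathcal{O}$ is the maximal integral manifold of $\mathcal{D}^{\mathfrak{g}(J)}$, its underlying manifold inherits the even part of the tangent distribution, so for $x\in J_0$ one has $T_x\mathcal{O}_0=(\mathfrak{g}(J)\cdot x)_0=\mathfrak{g}(J)_0\cdot x$, the last equality holding because the action preserves parity and $x$ is even. This is precisely the infinitesimal orbit distribution of the connected Lie group $G(J)_0$ on $J_0$, so $\mathcal{O}_0=G(J)_0\cdot x$; the object to be matched is the leaf through $x$ of the distribution $y\mapsto\mathfrak{g}(J_0)\cdot y$, namely $G(J_0)\cdot x$.

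The heart of the argument is the pointwise identity $\mathfrak{g}(J)_0\cdot y=\mathfrak{g}(J_0)\cdot y$ for all $y\in J_0$. By \eqref{gj0} we have $\mathfrak{g}(J)_0=\mathfrak{g}(J_0)\oplus[\mathfrak{m}_{J_1},\mathfrak{m}_{J_1}]$, so $\supseteq$ is automatic and the content is $[\mathfrak{m}_{J_1},\mathfrak{m}_{J_1}]\cdot y\subseteq\mathfrak{g}(J_0)\cdot y$. To obtain it I would write $y=\sum_k\mu_k e_k$ in a Jordan frame and apply the preceding Lemma: the even part of $\mathfrak{g}(J)\cdot y=\bigoplus_{\mu_i\neq 0~\mathrm{or}~\mu_j\neq 0}P_{ij}$ is $\bigoplus_{\mu_i\neq 0~\mathrm{or}~\mu_j\neq 0}(P_{ij})_0$. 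The idempotents $\{e_k\}$ remain a Jordan frame of $J_0$ (a decomposition $e_k=p+q$ into idempotents of $J_0$ would also be one in $J$, contradicting primitivity), and the spaces $(P_{ij})_0=P_{ij}\cap J_0$ are exactly the Peirce components of $J_0$; hence the same Lemma applied to $J_0$ gives $\mathfrak{g}(J_0)\cdot y=\bigoplus_{\mu_i\neq 0~\mathrm{or}~\mu_j\neq 0}(P_{ij})_0$. As $\mathfrak{g}(J)_0\cdot y=(\mathfrak{g}(J)\cdot y)_0$, the two subspaces coincide.

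Finally I would upgrade equality of distributions to equality of orbits. From $\mathfrak{g}(J_0)\subseteq\mathfrak{g}(J)_0$ we get $G(J_0)\subseteq G(J)_0$ and hence $G(J_0)\cdot x\subseteq\mathcal{O}_0$; since the two distributions on $J_0$ now agree at every point, this inclusion is an injective immersion between manifolds of equal dimension, so $G(J_0)\cdot x$ is open in $\mathcal{O}_0$. Any $G(J_0)$-orbit meeting the connected set $\mathcal{O}_0$ lies inside it (because $G(J_0)\cdot y\subseteq G(J)_0\cdot y=\mathcal{O}_0$ for $y\in\mathcal{O}_0$) and is open in it by the same dimension count; as a connected space cannot be a disjoint union of more than one nonempty open subset, $\mathcal{O}_0$ is a single such orbit, namely $G(J_0)\cdot x$.

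I expect the main obstacle to be the crux inclusion $[\mathfrak{m}_{J_1},\mathfrak{m}_{J_1}]\cdot y\subseteq\mathfrak{g}(J_0)\cdot y$. A priori an even operator $[L_a,L_b]$ with $a,b\in J_1$ restricts to a derivation of $J_0$ that may be outer, so there is no reason for it to lie in $\mathfrak{g}(J_0)$ as an operator; only its value at the point $y$ is controlled, and that control is exactly what the Peirce eigenvalue bookkeeping in the Lemma supplies.
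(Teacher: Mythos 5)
Your proof is correct, and it takes a genuinely different route from the paper's. The paper argues at the group level: by \eqref{gj0}, $G(J)_0$ is generated by $G(J_0)$ together with exponentials of the inner derivations in $[\mathfrak{m}_{J_1},\mathfrak{m}_{J_1}]$, which restrict to derivations of $J_0$; Theorem \ref{bosorb} then guarantees that the resulting automorphisms of $J_0$ preserve spectral coefficients, hence preserve the $G(J_0)$-orbits (the level sets of the spectral signature), so the extra generators do not enlarge the orbit. You argue at the infinitesimal level instead: the pointwise identity $\mathfrak{g}(J)_0\cdot y=\mathfrak{g}(J_0)\cdot y$, extracted from the Peirce--eigenvalue Lemma preceding the theorem (applied once to $J$ and once to $J_0$, after checking that Jordan frames and Peirce components of $J$ and $J_0$ match up), followed by the standard openness-and-connectedness gluing. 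Both proofs rest on the same two pillars --- Corollary \ref{decompofgj} and the positivity of $J_0$ (you need it for the spectral decomposition $y=\sum_k\mu_k e_k$; the paper needs it for Theorem \ref{bosorb}) --- and both must deal with what you rightly isolate as the crux: for $a,b\in J_1$ the operator $[L_a,L_b]|_{J_0}$ may be an outer derivation of $J_0$, so it need not lie in $\mathfrak{g}(J_0)$ as an operator and only its values at points can be controlled. The paper controls these values globally (automorphisms cannot change spectral coefficients), while you control them pointwise (eigenvalue bookkeeping in the Peirce decomposition). What each approach buys: the paper's proof is shorter and yields as a by-product the spectral-signature description of $\mathcal{O}_0$ recorded in the Corollary that follows, whereas yours is more self-contained, using only the Lemma proved in this paper rather than the orbit classification of \cite{Ciaglia23}, at the price of spelling out the Lie-theoretic gluing step that the paper's citation-level proof leaves implicit.
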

    \begin{proof}
    	It follows from \eqref{gj0} that $G(J)_0$ is generated by elements in $\mathfrak{g}(J_0)$ and certain derivations of $J_0$. On the other hand, by Theorem \ref{bosorb}, $G(J_0)$-orbits are preserved by automorphisms of $J_0$.
    \end{proof}
    
    Let's recall the following theorem for positive Jordan algebras.
    \begin{thm}\cite{Ciaglia23}*{Theorem 1}\label{bosorb}
    	For a positive Jordan algebra $J$, the following holds:
    	\begin{enumerate}
    		\item The orbits of $\mathrm{Aut}(J)_0$ are the sets of elements with equal spectral coefficients, where $\mathrm{Aut}(J)_0 \subset \mathrm{gl}(J)$ is the connected Lie group generated by derivations of $J$.
    		\item The orbits of the structure Lie group $G(J)$ are the sets of elements with equal spectral signature.
    	\end{enumerate}
    \end{thm}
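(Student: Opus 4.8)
The plan is to prove both statements by the same two-step scheme: show that the group in question leaves the relevant spectral datum invariant, so that each orbit is contained in one level set, and then show that the group acts transitively on each level set. Throughout I work with the spectral decomposition $x=\sum_{i=1}^{r}\lambda_i e_i$ of Proposition~\ref{euclijor} and the tangent-space computation of the preceding Lemma, which in the non-super case $J_1=0$ gives $\mathrm{Der}_0(J)\cdot x=\bigoplus_{\lambda_i\neq\lambda_j}P_{ij}$ and $\mathfrak{g}(J)\cdot x=\bigoplus_{\lambda_i\neq 0\ \mathrm{or}\ \lambda_j\neq 0}P_{ij}$. Since a positive $J$ is semisimple, every derivation is inner (a classical fact), so $\mathrm{Der}(J)=\mathrm{Der}_0(J)$ and the two displayed subspaces are exactly the tangent spaces at $x$ of the $\mathrm{Aut}(J)_0$- and $G(J)$-orbits through $x$.

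For (1), invariance is immediate: any $\phi\in\mathrm{Aut}(J)$ carries the frame $\{e_i\}$ to a new Jordan frame and fixes the scalars, so $\phi(x)=\sum_i\lambda_i\phi(e_i)$ has the same unordered spectrum as $x$; hence each orbit lies in a single level set $S_\Lambda=\{y:\mathrm{spec}(y)=\Lambda\}$. For transitivity I would argue that each orbit is open in $S_\Lambda$: the orbit tangent space $\bigoplus_{\lambda_i\neq\lambda_j}P_{ij}$ coincides with $T_xS_\Lambda$, because the complementary directions---the diagonal $P_{ii}=\mathbb{R}e_i$ and the $P_{ij}$ with $\lambda_i=\lambda_j$---are precisely those that change the eigenvalue multiset to first order. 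Since all points of $S_\Lambda$ share the same pattern of coincidences among the $\lambda_i$, every orbit meeting $S_\Lambda$ has the same dimension and is therefore open in $S_\Lambda$; connectedness of $S_\Lambda$ then forces it to be a single orbit. Equivalently, one may invoke the classical transitivity of $\mathrm{Aut}(J)_0$ on Jordan frames and reorder using frame permutations, which are themselves connected automorphisms.

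For (2), I first show $G(J)$ preserves the signature $(n_+,n_-,n_0)$. From the fundamental formula $P_{gx}=gP_xg^{\#}$ satisfied by the structure group (with $g^{\#}$ invertible) one gets $\mathrm{rank}\,P_{gx}=\mathrm{rank}\,P_x$; since $P_x$ acts on $P_{ij}$ by $\lambda_i\lambda_j$, its rank determines the number $n_++n_-$ of nonzero eigenvalues of $x$, which is thus constant on orbits. As $G(J)$ is connected and the eigenvalues vary continuously along a path in an orbit without crossing zero, $n_+$ and $n_-$ are separately constant, so each orbit lies in one signature class. For transitivity I use the quadratic representations $P_a=2L_a^2-L_{a^2}$: for $a=\sum_i\alpha_i e_i$ one has $P_a(e_i)=\alpha_i^2 e_i$, and for invertible $a$ with $\alpha_i>0$ the path $t\mapsto P_{a^t}$ joins $P_a$ to the identity inside the structure group, so $P_a\in G(J)_0$. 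Given $x$ of signature $(n_+,n_-,n_0)$, a frame permutation in $\mathrm{Aut}(J)_0\subset G(J)_0$ orders the eigenvalues by sign, and then $P_a$ with $\alpha_i=|\lambda_i|^{-1/2}$ on the nonzero slots rescales $x$ to the normal form $\sum_{i\le n_+}e_i-\sum_{n_+<i\le n_++n_-}e_i$. Any two elements of the same signature class reduce to this common normal form, hence lie in one $G(J)$-orbit.

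The crux---and the step I expect to be the main obstacle---is the global input in (1): the transitivity of $\mathrm{Aut}(J)_0$ on Jordan frames, equivalently the connectedness of each isospectral level set $S_\Lambda$. Everything else is either the easy invariance direction or the explicit reduction to normal form via $P_a$, whereas this fact is genuinely global and belongs to the spectral theory of Euclidean Jordan algebras; in the operator model $J=\mathcal{B}_{sa}(\mathcal{H})$ it amounts to the connectedness of the relevant real flag manifold. In the write-up I would either cite this transitivity from the Euclidean Jordan algebra literature or establish connectedness of $S_\Lambda$ directly and then close the argument with the open-orbit observation above.
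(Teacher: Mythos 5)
First, a point of reference: this paper never proves Theorem~\ref{bosorb} at all --- it is imported verbatim from \cite{Ciaglia23}*{Theorem 1} and used as a black box for the super extension --- so your proposal can only be compared with the argument in that reference. In outline you reproduce the standard route taken there: invariance of the spectral data, plus transitivity obtained from (a) transitivity of $\mathrm{Aut}(J)_0$ on Jordan frames for part (1), and (b) sign-ordering followed by rescaling with quadratic representations $P_a=2L_a^2-L_{a^2}$ for part (2). Your key computational claims are correct: for $a=\sum_i\alpha_ie_i$ with all $\alpha_i>0$, both $P_a$ and $\exp(2L_{\log a})$ act on the Peirce space $P_{ij}$ of the frame diagonalizing $a$ by the scalar $\alpha_i\alpha_j$, so $P_a=\exp(2L_{\log a})$ indeed lies in the connected group generated by left multiplications; and the rank-plus-continuity argument is the right mechanism for invariance of the signature.

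Two genuine gaps remain. (i) Your fallback for transitivity in part (1) --- ``each orbit is open in $S_\Lambda$, and $S_\Lambda$ is connected, hence a single orbit'' --- is circular as described: to speak of $T_xS_\Lambda$ and of openness of an orbit in $S_\Lambda$ you must already know that $S_\Lambda$ is a smooth submanifold of dimension $\dim\bigoplus_{\lambda_i\neq\lambda_j}P_{ij}$, and the only available way to establish that is to exhibit $S_\Lambda$ as a homogeneous space of $\mathrm{Aut}(J)_0$, i.e.\ the statement being proved. So citing frame transitivity (Faraut--Kor\'anyi, Cor.~IV.2.7) is not an optional shortcut; it \emph{is} the proof. (ii) That citation carries a simplicity hypothesis you cannot drop: a positive $J$ is only semisimple, and $\mathrm{Aut}(J)_0$ preserves each simple ideal, so it cannot move spectrum between factors. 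In $J=\mathbb{R}\oplus\mathbb{R}$ the elements $(1,0)$ and $(0,1)$ have the same spectral coefficients $\{0,1\}$ yet lie in distinct $\mathrm{Aut}(J)_0$-orbits (here $\mathrm{Der}(J)=0$, so orbits are points). The same issue enters part (2) twice: the rank of $P_x$ determines $n_++n_-$ only when all off-diagonal Peirce spaces have a common dimension (true in the simple case, false in general --- in $\mathrm{Sym}_2(\mathbb{R})\oplus\mathcal{B}_{sa}(\mathbb{C}^2)$ one finds elements with $\mathrm{rank}\,P_x=4$ having either two or three nonzero eigenvalues); and your closing step, ``any two elements of the same signature class reduce to this common normal form,'' silently uses frame transitivity once more to identify $\sum_{i\le n_+}e_i-\sum_{n_+<i\le n_++n_-}e_i$ with the corresponding expression in a \emph{different} frame. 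The repair is to prove both parts for simple positive algebras and then decompose along the (unique) splitting of $J$ into simple ideals, which is also the reading under which the quoted statement should be understood.
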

    
    \begin{cor}
    	For a unital Jordan superalgebra $J$ with positive even part $J_0$, the underlying topological space $\mathcal{O}_0$ of the $G(J)$-orbit $\mathcal{O}$ consists of elements in $J_0$ with equal spectral signatures.
    \end{cor}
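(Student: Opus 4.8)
The plan is to derive the statement by chaining the two results that immediately precede it: the Theorem identifying $\mathcal{O}_0$ with a $G(J_0)$-orbit, and the recalled Theorem \ref{bosorb} describing the $G(J_0)$-orbits of a positive Jordan algebra. First I would note that the hypothesis is exactly what makes both ingredients available: since the even part $J_0$ of $J$ is positive, \ref{3} of Proposition \ref{euclijor} guarantees that every $x \in J_0$ admits a spectral decomposition $x = \sum_{i=1}^r \lambda_i e_i$ with respect to some Jordan frame, so that the spectral signature, i.e.\ the number of positive, negative, and vanishing coefficients $\lambda_i$, is a well-defined function on $J_0$.

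Next I would invoke the Theorem above to pass from the orbit of the full structure Lie supergroup to a purely even object: the underlying topological space $\mathcal{O}_0$ of the $G(J)$-orbit $\mathcal{O}$ coincides with the $G(J_0)$-orbit through the same base point in $J_0$. This reduces the problem entirely to the non-super setting, where $J_0$ is an honest positive Jordan algebra. Finally I would apply the second statement of Theorem \ref{bosorb} to $J_0$, by which the $G(J_0)$-orbits are precisely the level sets of the spectral signature; composing this with the identification of the previous step shows that $\mathcal{O}_0$ consists exactly of the elements of $J_0$ sharing a common spectral signature, which is the assertion.

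I do not anticipate a genuine obstacle here, since the corollary is a formal consequence of the two theorems. The only point needing a word of care is to confirm that the ``corresponding $G(J_0)$-orbit'' produced by the Theorem is a $G(J_0)$-orbit in precisely the sense to which Theorem \ref{bosorb} applies, and that the notion of spectral signature used for $J$ agrees with the one defined for the positive Jordan algebra $J_0$; both are immediate from the definitions.
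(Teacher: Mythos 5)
Your proposal is correct and is exactly the argument the paper intends: the corollary is stated without proof precisely because it is the immediate composition of the preceding theorem (identifying $\mathcal{O}_0$ with the corresponding $G(J_0)$-orbit) with part 2 of Theorem \ref{bosorb} applied to the positive Jordan algebra $J_0$. Your added remarks on why positivity of $J_0$ makes the spectral signature well defined and why the two notions of signature agree are the right points of care, and nothing further is needed.
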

    
    Let's now study the action of $G(J)$ on  $J^*$. Let $(J,\beta)$ be a unital pseudo-Euclidean Jordan superalgebra with positive even part. $\beta$ induces the following identification
    \begin{align*}
    	\flat: J \rightarrow J^*, \quad x \mapsto x^{\flat}:=\beta(x,\cdot).
    \end{align*} 
    Let $\sharp: J^* \rightarrow J$ denote the inverse of $\flat$. Let $L_x^*$ denote the dual action of $L_x$ on $J^*$. We have
    \begin{align*}
    	L_x^*(\xi)(y)=\xi(L_x(y)) = \beta(\xi^{\sharp},L_x(y)) = (-1)^{|x||\xi|}\beta(L_x(\xi^{\sharp}),y)=(-1)^{|x||\xi|}(L_x(\xi^{\sharp}))^{\flat}(y).
    \end{align*}
    Hence,
    \begin{align*}
    	L_x^*(\xi) = (-1)^{|x||\xi|}(L_x(\xi^{\sharp}))^{\flat}.
    \end{align*}
    With a slight abuse of notation, we use again $\mathcal{O}$ denote a $G(J)$-orbit in $J^*$.  For $\xi \in J_0^*$, we can define the spectral coefficients of $\xi$ to be the spectral coefficients of $\xi^{\sharp}$. 
     \begin{prop}
    	For $\xi \in \mathcal{O}_0 \subset J_0^*$, $T_{\xi} \mathcal{O} \cong \bigoplus_{\lambda_i \neq 0~\mathrm{or}~\lambda_j \neq 0} P_{ij}^{\flat}$.
    \end{prop}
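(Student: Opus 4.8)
The plan is to transport the computation of $T_{x_0}\mathcal{O}$ for $x_0 \in J_0$, already carried out in the preceding proposition, to the dual side via the identification $\flat$, exploiting the associativity of $\beta$. Set $x_0 := \xi^{\sharp}$; since $\flat$ is even and $\xi \in J_0^*$, we have $x_0 \in J_0$, and by the definition of spectral coefficients on $J^*$ the spectral data of $\xi$ are exactly those of $x_0$. Writing $x_0 = \sum_{i=1}^r \lambda_i e_i$ for a Jordan frame, the preceding proposition gives $T_{x_0}\mathcal{O} \cong \bigoplus_{\lambda_i \neq 0~\mathrm{or}~\lambda_j \neq 0} P_{ij}$, so it suffices to prove $T_{\xi}\mathcal{O} = (T_{x_0}\mathcal{O})^{\flat}$, i.e. that $\sharp$ carries the image of the infinitesimal dual action of $\mathfrak{g}(J)$ at $\xi$ onto $\mathfrak{g}(J)\cdot x_0$. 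Since the generator of the dual action differs from the transpose only by an overall sign, and overall signs do not affect spans, it is enough to show $\sharp\big(\mathrm{span}\{\phi^*(\xi):\phi\in\mathfrak{g}(J)\}\big) = \mathfrak{g}(J)\cdot x_0$.

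I would split this using Corollary \ref{decompofgj} and Lemma \ref{unider}, which give $\mathfrak{g}(J) = \mathfrak{m}_J \oplus \mathrm{Der}_0(J)$ with $\mathrm{Der}_0(J) = [\mathfrak{m}_J,\mathfrak{m}_J]$. For the $\mathfrak{m}_J$ piece the claim is immediate: because $\xi$ is even, the displayed identity $L_x^*(\xi) = (-1)^{|x||\xi|}(L_x(\xi^{\sharp}))^{\flat}$ collapses to $L_x^*(\xi) = (L_x(x_0))^{\flat}$, whence $\sharp(\mathfrak{m}_J^*\cdot\xi) = \mathfrak{m}_J\cdot x_0$. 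For the $\mathrm{Der}_0(J)$ piece I would first record that the graded transpose is a graded Lie anti-homomorphism, so $[L_a,L_b]^* = -[L_a^*,L_b^*]$, and that conjugation by $\flat$ turns $L_a^*$ into the parity-twisted operator $\widetilde{L}_a := \sharp\circ L_a^*\circ\flat$, which acts by $\widetilde{L}_a(c) = (-1)^{|a||c|}L_a(c)$. Since conjugation by $\flat$ preserves super-commutators, this yields $\sharp\big([L_a,L_b]^*(\xi)\big) = -[\widetilde{L}_a,\widetilde{L}_b](x_0)$.

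The one genuinely computational point — and the main thing to check — is that, when evaluated at the even element $x_0$, the twisted bracket agrees with the untwisted one up to sign: a short case analysis over the parities of $a$ and $b$ shows that the twists combine to give $[\widetilde{L}_a,\widetilde{L}_b](x_0) = +[L_a,L_b](x_0)$ when $a,b$ are not both odd and $-[L_a,L_b](x_0)$ when both are odd, the factors $(-1)^{|a||c|}$ contributing only an overall sign because $x_0$ and the intermediate vectors carry controlled parities. As overall signs are irrelevant to the span, this gives $\sharp(\mathrm{Der}_0(J)^*\cdot\xi) = \mathrm{Der}_0(J)\cdot x_0$. Adding the two pieces yields $\sharp(\mathfrak{g}(J)^*\cdot\xi) = \mathfrak{m}_J\cdot x_0 + \mathrm{Der}_0(J)\cdot x_0 = \mathfrak{g}(J)\cdot x_0$, and applying $\flat$ together with the preceding proposition completes the identification $T_{\xi}\mathcal{O} \cong \bigoplus_{\lambda_i\neq 0~\mathrm{or}~\lambda_j\neq 0} P_{ij}^{\flat}$. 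The subtlety to guard against is that the global $G(J)$-orbit in $J^*$ need not equal $\flat$ of the orbit in $J$, since $G(J)$ is not $\beta$-orthogonal; the argument must therefore remain at the infinitesimal level, where associativity of $\beta$ supplies exactly the self-adjointness of $L_x$ that makes the tangent spaces correspond.
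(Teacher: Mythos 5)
Your proof is correct and follows essentially the same route as the paper, which states this proposition without a separate proof because it is meant to follow immediately from the identity $L_x^*(\xi) = (-1)^{|x||\xi|}\left(L_x(\xi^{\sharp})\right)^{\flat}$ derived just before it, i.e.\ by transporting the even-point tangent-space computation on $J$ through $\flat$. Your explicit handling of the $[\mathfrak{m}_J,\mathfrak{m}_J]$-part via the parity-twisted operators $\widetilde{L}_a = \sharp \circ L_a^* \circ \flat$ (with the sign case analysis at the even point $x_0 = \xi^{\sharp}$) fills in a detail the paper leaves implicit, and your closing caution about working only at the infinitesimal level is sound.
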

    
    \begin{thm}
         Let $(J,\beta)$ be a unital pseudo-Euclidean Jordan superalgebra with positive even part. The underlying topological space $\mathcal{O}_0$ of a $G(J)$-orbit $\mathcal{O}$ consists of elements with equal spectral signatures.
    \end{thm}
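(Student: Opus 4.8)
The plan is to deduce this statement from the already-established description of body orbits in $J_0$ by transporting it through the $\beta$-duality $\flat$. The whole point is that $\flat$ is $G(J)$-equivariant, so it carries $G(J)$-orbits in $J$ to $G(J)$-orbits in $J^*$; once this is known, the conclusion is purely definitional, because the spectral signature of an element of $J_0^*$ was defined to be that of its $\sharp$-image. Thus the proof splits into an equivariance step, where the algebra structure is used, and a formal transport step.

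First I would verify that $\flat$ intertwines the two infinitesimal actions. The identity $L_x^*(\xi) = (-1)^{|x||\xi|}(L_x(\xi^{\sharp}))^{\flat}$ says precisely that $\sharp$ (equivalently $\flat$) matches, up to Koszul sign, each generator $L_x$ of the defining action on $J$ with the corresponding dual operator $L_x^*$ on $J^*$. By Lemma \ref{strjor} the whole structure Lie superalgebra $\mathfrak{g}(J)$ is generated by the $L_x$ together with their brackets, and in particular, by \eqref{gj0}, the even part $\mathfrak{g}(J)_0$ relevant to the body $J_0^*$ is spanned by the even left multiplications $L_x$ with $x\in J_0$ and the even inner derivations $[L_a,L_b]$ with $a,b\in J_1$. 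Since $\flat$ is equivariant on the generating $L_x$, it is equivariant for the Lie bracket as well, so the image under $\flat$ of the space $\mathfrak{g}(J)_0|_{J_0}$ of operators on $J_0$ is exactly the dual image $(\mathfrak{g}(J)_0|_{J_0})^*$ acting on $J_0^*$. The signs produced by the skewness of the derivation part only flip generators within this span, so they are immaterial: the two families of generators span the same subspace of endomorphisms and hence integrate to the same connected group of transformations. Consequently $\flat$ carries the $G(J)_0$-orbit of any $\eta\in J_0$ onto the $G(J)_0$-orbit of $\eta^{\flat}\in J_0^*$.

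With equivariance in hand the argument concludes quickly. Let $\mathcal{O}$ be a $G(J)$-orbit in $J^*$, fix $\xi\in\mathcal{O}_0\subset J_0^*$ and set $\eta=\xi^{\sharp}\in J_0$. The previous step gives $\mathcal{O}_0=\flat(\mathcal{O}_0')$, where $\mathcal{O}_0'$ is the body of the $G(J)$-orbit through $\eta$. By the Corollary above (which identifies such body orbits, via the theorem that $\mathcal{O}_0'$ is the corresponding $G(J_0)$-orbit together with Theorem \ref{bosorb}), $\mathcal{O}_0'$ is exactly the set of elements of $J_0$ having the same spectral signature as $\eta$. Since the spectral signature of any $\zeta\in J_0^*$ is by definition that of $\zeta^{\sharp}$, and $\flat$ is the inverse of $\sharp$, the image $\flat(\mathcal{O}_0')$ is precisely the set of $\zeta\in J_0^*$ whose spectral signature equals that of $\eta$. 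Therefore $\mathcal{O}_0$ consists of all elements of equal spectral signature, as asserted.

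The main obstacle is the equivariance of $\flat$ on the inner-derivation part of $\mathfrak{g}(J)_0$: the explicit formula is recorded only for the $L_x$, so one must confirm that it is compatible with the Lie bracket, equivalently that an even inner derivation $[L_a,L_b]$ is $\beta$-skew. This is the single point where the associativity of $\beta$ (together with supercommutativity) genuinely enters, via a short computation showing $\beta(Dx,y)=-(-1)^{|D||x|}\beta(x,Dy)$; everything else is the formal transport of the result already proved in $J_0$.
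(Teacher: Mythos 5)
Your proof is correct and follows essentially the route the paper intends: the theorem is stated there without an explicit proof, the preceding text having set up exactly the ingredients you use --- the identification $\flat$, the formula $L_x^*(\xi)=(-1)^{|x||\xi|}(L_x(\xi^{\sharp}))^{\flat}$, the definition of spectral coefficients on $J_0^*$ via $\sharp$, and the corollary describing bodies of $G(J)$-orbits in $J_0$ as equal-spectral-signature sets. Your additional verification that even inner derivations are $\beta$-skew, so that the resulting sign flip $D\mapsto -D$ leaves the spanned Lie algebra and hence the generated connected group unchanged, is precisely the detail the paper leaves implicit, and it is the right one.
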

    
    \begin{thm}
    	Let $(J,\beta)$ be a unital pseudo-Euclidean Jordan superalgebra with positive even part. $\xi \in J^*_0$ is $\mathfrak{m}_J$-regular if and only if the spectral coefficients $\{\lambda_i\}_{i=1}^r$ of $\xi$ satisfy:
    	\begin{align*}
    		\lambda_i + \lambda_j \neq 0, \quad \forall \lambda_i, \lambda_j \neq 0.
    	\end{align*}
    \end{thm}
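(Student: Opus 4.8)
The plan is to unwind $\mathfrak{m}_J$-regularity into a statement about Peirce spaces. By definition $\xi$ is $\mathfrak{m}_J$-regular iff $\mathrm{rank}_{\mathcal{D}^{\mathfrak{m}_J}}(\xi)=\mathrm{rank}_{\mathcal{D}^{\mathfrak{g}(J)}}(\xi)$, and since $\mathcal{D}^{\mathfrak{m}_J}\subseteq\mathcal{D}^{\mathfrak{g}(J)}$ the inclusion $T_\xi\mathcal{D}^{\mathfrak{m}_J}\subseteq T_\xi\mathcal{D}^{\mathfrak{g}(J)}$ holds automatically; thus regularity is equivalent to equality of these two tangent spaces, and the whole theorem reduces to deciding when this inclusion is strict.

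First I would pull the problem back from $J^*$ to $J$ along $\flat$. The identity $L_x^*(\xi)=(-1)^{|x||\xi|}(L_x(\xi^\sharp))^\flat$ shows that $\flat$ intertwines the two $G(J)$-actions up to signs that do not affect linear spans, so $T_\xi\mathcal{D}^{\mathfrak{m}_J}=(\mathfrak{m}_J\cdot\xi^\sharp)^\flat$ and $T_\xi\mathcal{D}^{\mathfrak{g}(J)}=(\mathfrak{g}(J)\cdot\xi^\sharp)^\flat$. Writing $\xi^\sharp=\sum_{i=1}^r\lambda_i e_i$ in a Jordan frame and feeding this into the Lemma that computes $\mathfrak{m}_J\cdot x$ and $\mathfrak{g}(J)\cdot x$ gives
\begin{align*}
T_\xi\mathcal{D}^{\mathfrak{m}_J}=\bigoplus_{\lambda_i+\lambda_j\neq 0}P_{ij}^\flat,\qquad T_\xi\mathcal{D}^{\mathfrak{g}(J)}=\bigoplus_{\lambda_i\neq 0\ \text{or}\ \lambda_j\neq 0}P_{ij}^\flat .
\end{align*}

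Next I would compare the two index sets. Any pair with $\lambda_i+\lambda_j\neq 0$ automatically satisfies $\lambda_i\neq 0$ or $\lambda_j\neq 0$, so the second sum can differ from the first only on pairs with $\lambda_i+\lambda_j=0$ and $(\lambda_i,\lambda_j)\neq(0,0)$, i.e.\ with $\lambda_i=-\lambda_j\neq 0$. Consequently the two tangent spaces coincide iff $P_{ij}=0$ for every such pair. This gives the ``if'' direction at once: if $\lambda_i+\lambda_j\neq 0$ for all nonzero $\lambda_i,\lambda_j$ then no offending pair exists, the index sets agree, and $\xi$ is $\mathfrak{m}_J$-regular.

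The hard part is the ``only if'' direction, and this is where I expect the real work to lie. Here one must rule out the possibility that a forbidden pair $\lambda_i=-\lambda_j\neq 0$ slips through with $P_{ij}=0$ (as happens, for instance, for the diagonal algebra $\mathbb{R}^n$, where all off-diagonal Peirce spaces vanish); in other words the content of this direction is precisely the non-vanishing of the connecting Peirce space $P_{ij}$ whenever $e_i$ and $e_j$ carry opposite nonzero eigenvalues. I would establish this by passing to the $\beta$-irreducible decomposition of Proposition \ref{betadecomp} and arguing that within a single simple factor any two distinct idempotents of a Jordan frame are connected, so that $P_{ij}\neq 0$; the Peirce multiplication rules of Proposition \ref{peirce} together with the non-degeneracy of $\beta$ would then produce an explicit nonzero element of $P_{ij}$, forcing the rank to drop strictly and hence $\xi$ to fail $\mathfrak{m}_J$-regularity. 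Making this connectedness argument precise in the super setting, where both the even and odd parts of $P_{ij}$ must be tracked, is the main obstacle.
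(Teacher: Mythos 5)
Your reduction of $\mathfrak{m}_J$-regularity to a comparison of the two Peirce sums, and your proof of the ``if'' direction, are correct and follow the same skeleton as the paper's argument (the paper itself only defers to the non-super statement \cite{Ciaglia23}*{Theorem 2}). The genuine gap is in the ``only if'' direction, and it is not a technicality you can engineer around: the claim you set out to prove --- $P_{ij}\neq\{0\}$ whenever $\lambda_i=-\lambda_j\neq 0$ --- is false in the stated generality. Your aside about $\mathbb{R}^n$ is not an obstacle to be ``ruled out''; it is a counterexample to the unrestricted ``only if'' direction itself: in $\mathbb{R}^n$ all $L_a$ commute, so $\mathfrak{g}(J)=\mathfrak{m}_J$ by Corollary \ref{decompofgj} and \emph{every} $\xi$ is $\mathfrak{m}_J$-regular, yet $\xi=e_1^\flat-e_2^\flat$ has spectral coefficients $1,-1,0,\dots,0$. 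For a counterexample with nontrivial odd part take $J=D(1)\oplus D(1)$, with frame $e_1,e_2$ in the first summand and $e_3,e_4$ in the second, and $\xi=e_1^\flat-e_3^\flat$: the only offending pair is $(1,3)$, but $P_{13}=\{0\}$ since $e_1,e_3$ lie in different ideals, so both distributions have equal rank at $\xi$ and $\xi$ is $\mathfrak{m}_J$-regular while $\lambda_1+\lambda_3=0$ with $\lambda_1,\lambda_3\neq 0$. Consequently, passing to Proposition \ref{betadecomp} cannot save the argument --- the offending pair may lie in two different $\beta$-irreducible summands, which is precisely the case in which the desired conclusion fails. The only provable reading of the theorem restricts the spectral condition to pairs with $P_{ij}\neq\{0\}$, or equivalently assumes $(J,\beta)$ is $\beta$-irreducible together with nonvanishing of all connecting Peirce spaces; your write-up should have surfaced this as a needed correction of the statement rather than absorbing it as a proof obligation.

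Even under the $\beta$-irreducible reading, your sketch leaves the crux unproven: the connectedness claim is asserted and then deferred as ``the main obstacle''. Note that it is a genuinely super claim: for $D(t)$ the even part $\mathbb{R}\oplus\mathbb{R}$ is not simple and its two idempotents are not connected inside $J_0$; the connection $P_{12}=\mathrm{span}\{x,y\}$ is purely odd, and the canonical form satisfies $\tau\equiv 0$, so the classical Euclidean arguments (which exploit positivity of a trace form on the connecting spaces) do not carry over. Moreover, what $\beta$-irreducibility gives you directly --- via Proposition \ref{peirce} and the $\beta$-orthogonality of distinct Peirce spaces --- is only that the graph with vertices $e_i$ and edges $\{i,j\}$ such that $P_{ij}\neq\{0\}$ is connected, not that every pair of vertices is adjacent, which is what your argument needs for the specific offending pair. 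Closing that distance (or restructuring the proof to avoid it) is the actual mathematical content of the ``only if'' direction; neither your sketch nor the paper's one-line citation of the non-super case supplies it.
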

    \begin{proof}
    	The proof is essentially the same as the proof of \cite{Ciaglia23}*{Theorem 2}.
    \end{proof}
   
   Let $\xi \in J_0^*$. Supposing that $\xi^{\sharp}=\sum_{i=1}^r \lambda_i e_i$, where $\{e_i\}_{i=1}^r$ is a Jordan frame of $J_0$, then $\xi=(\xi^{\sharp})^{\flat} = \sum_{i=1}^r \lambda_i e_i^{\flat}$. 
   \begin{prop}\label{gb}
   	Let $\xi = \sum_{i=1}^r \lambda_i e_i^{\flat} \in J_0^*$ be as above. Then the bilinear pairing \eqref{g} at $\xi$ can be expressed as
   	\begin{align*}
   		\mathcal{g}_{\xi}= \sum_{\lambda_i \neq 0~\mathrm{or}~\lambda_j \neq 0} \frac{2}{\lambda_i + \lambda_j} \beta^{\flat}|_{P_{ij}^{\flat}},
   	\end{align*}
   	where $\beta^{\flat}$ is defined as $\beta^{\flat}(\xi_1,\xi_2):=\beta(\xi_1^{\sharp}, \xi_2^{\sharp})$, and the summation is implicitly assumed to be taken over $1 \leq i \leq j \leq r$.
   \end{prop}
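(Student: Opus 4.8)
The plan is to evaluate the tensor $\mathcal{g}$ of \eqref{g} on the tangent vectors coming from left multiplications and then compare with $\beta^{\flat}$ block by block in the Peirce decomposition of $J_0$ determined by $\{e_i\}_{i=1}^r$. First I would identify the relevant tangent vectors. Since $\mathfrak{H}_J=\mathfrak{D}^{\mathfrak{m}_J}$ is spanned by the fields $X_{L_a}$, and $X_{L_a}$ generates the dual action $L_a^*$, the tangent vector at $\xi$ attached to $x_{ij}\in P_{ij}$ is $X_{L_{x_{ij}}}(\xi)=L_{x_{ij}}^*(\xi)$. Because $\xi$ is even, the sign in $L_x^*(\xi)=(-1)^{|x||\xi|}(L_x(\xi^{\sharp}))^{\flat}$ drops out, and using $L_{x_{ij}}(\xi^{\sharp})=\tfrac12(\lambda_i+\lambda_j)x_{ij}$ (from the lemma computing $\mathfrak{m}_J\cdot x$) I obtain
\[
X_{L_{x_{ij}}}(\xi)=\tfrac12(\lambda_i+\lambda_j)\,x_{ij}^{\flat}.
\]
On every block with $\lambda_i+\lambda_j\neq 0$ this inverts to $x_{ij}^{\flat}=\tfrac{2}{\lambda_i+\lambda_j}X_{L_{x_{ij}}}(\xi)$, and under $\mathfrak{m}_J$-regularity these are exactly the blocks of $T_{\xi}\mathcal{O}=\bigoplus_{\lambda_i\neq 0~\mathrm{or}~\lambda_j\neq 0}P_{ij}^{\flat}$.

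Next I would evaluate $\mathcal{g}_{\xi}$ directly. Starting from $\mathcal{g}(X_a,X_b)=ab$ and composing with $\epsilon_{\xi}$ gives $\mathcal{g}_{\xi}(X_{L_{x_{ij}}}(\xi),X_{L_{y_{kl}}}(\xi))=\langle\xi,\{x_{ij},y_{kl}\}\rangle=\beta(\xi^{\sharp},\{x_{ij},y_{kl}\})$. Moving $\xi^{\sharp}$ across the product by associativity of $\beta$ and using that $L_{\xi^{\sharp}}$ acts on $P_{ij}$ as multiplication by $\tfrac12(\lambda_i+\lambda_j)$, I get
\[
\mathcal{g}_{\xi}(X_{L_{x_{ij}}}(\xi),X_{L_{y_{kl}}}(\xi))=\beta(\{\xi^{\sharp},x_{ij}\},y_{kl})=\tfrac12(\lambda_i+\lambda_j)\,\beta(x_{ij},y_{kl}).
\]
Substituting the two rescalings from the first step and using $\beta^{\flat}(x_{ij}^{\flat},y_{kl}^{\flat})=\beta(x_{ij},y_{kl})$ then yields $\mathcal{g}_{\xi}(x_{ij}^{\flat},y_{kl}^{\flat})=\tfrac{2}{\lambda_i+\lambda_j}\beta^{\flat}(x_{ij}^{\flat},y_{kl}^{\flat})$, which is precisely the claimed coefficient on the $P_{ij}^{\flat}$ block.

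Finally, to obtain the direct-sum structure I would verify that distinct Peirce blocks are $\beta$-orthogonal. The computation of Remark \ref{pdor}, applied to each $L_{e_m}$ (which is $\beta$-symmetric by associativity of $\beta$), shows that $x_{ij}$ and $y_{kl}$ are eigenvectors of some $L_{e_m}$ with distinct eigenvalues whenever $\{i,j\}\neq\{k,l\}$, whence $\beta(P_{ij},P_{kl})=0$ in that case. Consequently both $\mathcal{g}_{\xi}$ and $\beta^{\flat}$ vanish on off-diagonal blocks, and the block identities assemble into $\mathcal{g}_{\xi}=\sum_{\lambda_i\neq 0~\mathrm{or}~\lambda_j\neq 0}\tfrac{2}{\lambda_i+\lambda_j}\beta^{\flat}|_{P_{ij}^{\flat}}$.

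The main obstacle I expect is bookkeeping rather than conceptual: correctly identifying the tangent vector $X_{L_{x_{ij}}}(\xi)$ with the scaled element $\tfrac12(\lambda_i+\lambda_j)x_{ij}^{\flat}$, and ensuring that carrying two such rescalings through a degree-one factor $\tfrac12(\lambda_i+\lambda_j)$ produces the coefficient $\tfrac{2}{\lambda_i+\lambda_j}$ rather than its square. I would also confirm that the index set ``$\lambda_i\neq 0$ or $\lambda_j\neq 0$'' (the tangent space) coincides with ``$\lambda_i+\lambda_j\neq 0$'' (where $\mathcal{g}$ is defined and the coefficient is finite), which holds exactly under the $\mathfrak{m}_J$-regularity hypothesis, and note that the parity of $x_{ij}$ never enters the signs because $\xi$ is even.
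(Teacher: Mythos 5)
Your proof is correct and follows essentially the same route as the paper: the paper's own proof is simply a deferral to \cite{Ciaglia23}*{Proposition 5}, whose argument --- realizing the tangent vectors as $L_{x_{ij}}^*(\xi)=\tfrac12(\lambda_i+\lambda_j)\,x_{ij}^\flat$, evaluating $\mathcal{g}_\xi$ via the associativity of $\beta$ and the Peirce eigenvalues, and invoking $\beta$-orthogonality of distinct Peirce blocks --- is exactly what you carry out, transposed to the super setting. Your write-up moreover supplies the super-specific details the citation leaves implicit (the sign $(-1)^{|x||\xi|}$ dropping because $\xi$ is even, the block orthogonality via the argument of Remark \ref{pdor} applied to each $e_m$, and the matching of the index set ``$\lambda_i\neq 0$ or $\lambda_j\neq 0$'' with ``$\lambda_i+\lambda_j\neq 0$'' under $\mathfrak{m}_J$-regularity), which is all that needs checking.
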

   \begin{proof}
   	The proof is essentially the same as the proof of \cite{Ciaglia23}*{Proposition 5}.
   \end{proof}
    
    \subsection{Examples}
    
    Let $A$ be an associative superalgebra over a field $\mathbb{K}$ of characteristic 0. (We will mostly choose $\mathbb{K}$ to be $\mathbb{R}$.) The algebraic product on the underlying super vector space $A$ given by
    \[
    \{a,b\}=\frac{1}{2}\left(a b+(-1)^{|a||b|} b a\right)
    \]
    defines a Jordan superalgebra structure on $A$ which is denoted $A^{(+)}$. Jordan superalgebras that can be obtained as subalgebras of $A^{(+)}$ are called special. Jordan superalgebras that are not special are called exceptional.
    
    Recall that $\mathrm{gl}(\mathbb{K}^{m|n})$ is an associative superalgebra with even and odd parts
    \begin{align*}
    	\mathrm{gl}(\mathbb{K}^{m|n})_{0}=\bigg\{\begin{pmatrix}
    		a & 0\\
    		0 & d
    	\end{pmatrix}\bigg| a \in \mathrm{gl}(\mathbb{K}^m), d \in \mathrm{gl}(\mathbb{K}^n)\bigg\}, 
    	\quad 
    	\mathrm{gl}(\mathbb{K}^{m|n})_{1}=\bigg\{\begin{pmatrix}
    		0 & b\\
    		c & 0
    	\end{pmatrix}\bigg\}.
    \end{align*}
    $\mathrm{gl}(\mathbb{K}^{m|n})^{(+)}$ is a Jordan superalgebra. There exists an associative bilinear form $\beta$ on $\mathrm{gl}(\mathbb{K}^{m|n})^{(+)}$ defined by setting
    \begin{align*}
    	\beta(A,B):=\mathrm{str}(AB).
    \end{align*}
    \begin{exmp}\label{a11}
    	Let's consider the case where $n=m=1$, $\mathbb{K}=\mathbb{R}$. We choose the following basis
    	\begin{align*}
    		e_1 := \begin{pmatrix}
    			1  & 0 \\
    			0 & 0 
    		\end{pmatrix}, \quad
    		e_2 := \begin{pmatrix}
    			0  & 0 \\
    			0 & 1 
    		\end{pmatrix}, \quad
    		x := \begin{pmatrix}
    			0  & \sqrt{2}\\
    			0 & 0
    		\end{pmatrix}, \quad
    		y := \begin{pmatrix}
    			0  & 0 \\
    			\sqrt{2} & 0 
    		\end{pmatrix},    		
    	\end{align*}
    	where $e_1$, $e_2$ are even, and $x$, $y$ are odd. One can easily verify that $\{e_i\}_{i=1}^2$ is a Jordan frame and
    	\begin{align*}
    		\{e_1, x\}=\{e_2, x\}=\frac{1}{2} x, \quad \{e_1, y\}=\{e_2,y\}=\frac{1}{2}y.
    	\end{align*}
    	Therefore, the Peirce decomposition is given by $P_{11}=\mathrm{span}_{\mathbb{R}}\{e_1\}, P_{22}=\mathrm{span}_{\mathbb{R}}\{e_2\}$, $P_{12}=\mathrm{span}_{\mathbb{R}}\{x,y\}$. One can also check that
    	\begin{align*}
    		\{x,y\}=\frac{1}{2}\left(\begin{pmatrix}
    			2 & 0\\
    			0 & 0 
    		\end{pmatrix} - \begin{pmatrix}
    			0 & 0\\
    			0 & 2 
    		\end{pmatrix} \right) = e_1  - e_2.
    	\end{align*}
    	It follows that $\tau \equiv 0$. $\mathrm{gl}(\mathbb{R}^{1|1})^{(+)}$ is not semi-simple. However, one can check that
    	\begin{align*}
    		\beta(e_1,e_1)= 1, \quad \beta(e_2,e_2)=-1, \quad \beta(x,y)=2.
    	\end{align*}
    	Therefore, $(\mathrm{gl}(\mathbb{R}^{1|1})^{(+)},\beta)$ is pseudo-Euclidean with signature $(1,1)$.
    \end{exmp}
    
    A map $*: A \rightarrow A$ is a (super)involution if it satisfies $\left(a^{*}\right)^{*}=a$ and $(a b)^{*}=(-1)^{|a||b|} b^{*} a^{*}$.
    The set of symmetric elements $H(A, *):=\{a^*=a| a \in A\}$ is a special Jordan superalgebra since $\{a,b\}^*=\frac{1}{2}\left((-1)^{|a||b|}b^*a^* + a^*b^*\right) = \{a,b\}$ for $a, b \in H(A, *)$.
    Let $A=\mathrm{gl}(\mathbb{K}^{m|2n})$. Let $\mathrm{Id}_m$ and $\mathrm{Id}_n$ denote the $m \times m$ and $n \times n$ identity matrices, respectively. Let $U$ denote $\begin{pmatrix}
    	0 & -\mathrm{Id}_n \\
    	\mathrm{Id}_n & 0
    \end{pmatrix}$. One can check that the map $*: A \rightarrow A$ defined by
    \begin{align*}
    	\begin{pmatrix}
    		a & b\\
    		c & d
    	\end{pmatrix}
    	\mapsto
    	\begin{pmatrix}
    		\mathrm{Id}_m & 0\\
    		0 & U
    	\end{pmatrix}
    	\begin{pmatrix}
    		a^t & -c^t \\
    		b^t & d^t
    	\end{pmatrix}
    	\begin{pmatrix}
    		\mathrm{Id}_m & 0\\
    		0 & U^{-1}
    	\end{pmatrix}
    \end{align*}
    is a superinvolution, where $(\cdot)^t$ is the usual transpose operation of matrices. The Jordan superalgebra $\mathrm{Josp}(\mathbb{K}^{m|2n}):=H(\mathrm{gl}(\mathbb{K}^{m|2n}), *)$ is called the Jordan orthosymplectic superalgebra. Explicitly, elements of $\mathrm{Josp}(\mathbb{K}^{n|2m})$ are matrices of the form
    \begin{align*}
    	\begin{pmatrix}
    		a & b_1 & b_2 \\
    		-b_2^t & d_{11} & d_{12} \\
    		b_1^t & d_{21} & d_{11}^t
    	\end{pmatrix},
    \end{align*}
    where $a$ is a symmetric $m \times m$ matrix, $b_1$, $b_2$ are $m \times n$ matrices, $d_{11}$ is an arbitrary $n \times n$ matrix, and $d_{12}$, $d_{21}$ are $n \times n$ skew-symmetric matrices.
    
    \begin{exmp}\label{josp}
    	Let's consider the case where $n=m=1$, $\mathbb{K}=\mathbb{R}$. We choose the following basis
    	\begin{align*}
    		e_1 := \begin{pmatrix}
    			1  & 0 & 0 \\
    			0 & 0 & 0 \\
    		    0 & 0 & 0
    			\end{pmatrix}, \quad
    	    e_2 := \begin{pmatrix}
    	    	0  & 0 & 0 \\
    	    	0 & 1 & 0 \\
    	    	0 & 0 & 1
    	    \end{pmatrix}, \quad
    	    x := \begin{pmatrix}
    	    	0  & 1 & 0 \\
    	    	0 & 0 & 0 \\
    	    	1 & 0 & 0
    	    \end{pmatrix}, \quad
    	    y := \begin{pmatrix}
    	    	0  & 0 & -1 \\
    	    	1 & 0 & 0 \\
    	    	0 & 0 & 0
    	    \end{pmatrix}    		
    	\end{align*}
    	of $\mathrm{Josp}(\mathbb{R}^{1|2})$, where $e_1$, $e_2$ are even, and $x$, $y$ are odd. One can easily verify that $\{e_i\}_{i=1}^2$ is a Jordan frame and
    	\begin{align*}
    		\{e_1, x\}=\{e_2, x\}=\frac{1}{2} x, \quad \{e_1, y\}=\{e_2,y\}=\frac{1}{2}y.
    	\end{align*}
    	Therefore, the Peirce decomposition is given by $P_{11}=\mathrm{span}_{\mathbb{R}}\{e_1\}, P_{22}=\mathrm{span}_{\mathbb{R}}\{e_2\}$, $P_{12}=\mathrm{span}_{\mathbb{R}}\{x,y\}$. One can also check that
    	\begin{align*}
    		\{x,y\}=\frac{1}{2}\left(\begin{pmatrix}
    			1 & 0 & 0 \\
    			0 & 0 & 0 \\
    			0 & 0 & -1
    		\end{pmatrix} - \begin{pmatrix}
    		-1 & 0 & 0\\
    		0 & 1 & 0 \\
    		0 & 0  & 0
    		\end{pmatrix} \right) = e_1  - \frac{1}{2} e_2.
    	\end{align*}
    	It follows that $\tau \equiv 0$. $\mathrm{Josp}(\mathbb{R}^{1|2})$ is not semi-simple. However, one can check that
    	\begin{align*}
    		\beta(e_1,e_1) = 1, \quad \beta(e_2,e_2)=-2, \quad \beta(x,y)=2.
    	\end{align*}
    	Therefore, $(\mathrm{Josp}(\mathbb{R}^{1|2}),\beta)$ is a pseudo-Euclidean Jordan superalgebra with signature $(1,1)$.
    \end{exmp}
    \begin{rmk}
    	If $\mathbb{K}=\mathbb{C}$, we can replace the transpose operation $(\cdot)^{t}$ by the conjugate transpose operation $(\cdot)^{\dagger}$ in the definition of $\mathrm{Josp}(\mathbb{K}^{m|2n})$. We denote the resulting real Jordan superalgebras by $\mathrm{UJosp}(m,2n)$. $\mathrm{UJosp}(m,0)$ is simply the Jordan algebra $\mathcal{B}_{sa}(\mathbb{C}^m)$ of $m \times m$ hermitian matrices.
    \end{rmk}
    Let $V$ be a super vector space of dimension $(p|q)$, equipped with a even, symmetric, non-degenerate bilinear form $\langle \cdot, \cdot \rangle$. Then $\mathrm{Spin}(\mathbb{K}^{p|q}):=\mathbb{K}1+V$ is a Jordan superalgebra, where the product of two elements $a 1+u$ and $b 1+v$ in $J$ is given by
    \[
    \{a 1+u, b 1+v \}=a b 1+\langle u, v \rangle 1+ a v + b u
    \]
    for $u, v \in V$.
    $\mathrm{Spin}(\mathbb{K}^{p|q})$ is simple if and only if the form $\langle \cdot,\cdot \rangle$ is non-degenerate. Let $\beta$ be a bilinear form on $\mathrm{Spin}(\mathbb{K}^{p|q})$ given by
    \begin{align*}
    	\beta(a 1+u, b 1+v):= 2\left(a b + \langle u, v \rangle\right).
    \end{align*}
    It is easy to verify that $\beta$ is associative. In fact, we have
    \begin{align*}
    	\beta(\{a 1 +u, b 1 + v\}, c 1 + w) = \beta(a 1 + u, \{b 1 + v, c 1 + w\}) = 4\left( abc + a \langle v, w \rangle + b \langle u,w \rangle + c \langle u,v \rangle \right).
    \end{align*}

    \begin{exmp}\label{spin30}
    	Let's consider the case where $p=3$, $q=0$, and $\mathbb{K}=\mathbb{R}$. The algebra $\mathrm{Spin}(\mathbb{R}^{3|0})$ is purely even. Let $\{e,x,y\}$ be an orthonormal basis of $V = V_0$. Let
    	\begin{align*}
    		e_1 := \frac{1+e}{2}, \quad e_2:= \frac{1-e}{2}.
    	\end{align*}
    	One can easily verify that $\{e_i\}_{i=1}^2$ is a Jordan frame and
    	\begin{align*}
    		\{e_1, x\}=\{e_2, x\}=\frac{1}{2} x, \quad \{e_1, y\}=\{e_2,y\}=\frac{1}{2}y.
    	\end{align*}
    	Therefore, the Peirce decomposition is given by $P_{11}=\mathrm{span}_{\mathbb{R}}\{e_1\}, P_{22}=\mathrm{span}_{\mathbb{R}}\{e_2\}$, $P_{12}=\mathrm{span}_{\mathbb{R}}\{x,y\}$. One can also check that
    	\begin{align*}
    		\{x,x\}= \{y,y\} = e_1 + e_2, \quad \{x,y\}=0.
    	\end{align*}
        It is not hard to check that $\mathrm{Spin}(\mathbb{R}^{3|0}) \cong \mathrm{UJosp}(2,0)$ as real Jordan algebras.   
    \end{exmp}

    \begin{exmp}\label{spin12}
    	Let's consider the case where $p=1$, $q=2$. Let $e \in V_0$ be a unit vector, i.e., $\langle e,e\rangle=1$. Let $x, y \in V_1$ be two non-zero vectors such that $\langle x, y \rangle = 1$. Let
    	\begin{align*}
    		e_1 := \frac{1+e}{2}, \quad e_2:= \frac{1-e}{2}.
    	\end{align*}
    	 One can easily verify that $\{e_i\}_{i=1}^2$ is a Jordan frame and
    	\begin{align*}
    		\{e_1, x\}=\{e_2, x\}=\frac{1}{2} x, \quad \{e_1, y\}=\{e_2,y\}=\frac{1}{2}y.
    	\end{align*}
    	Therefore, the Peirce decomposition is given by $P_{11}=\mathrm{span}_{\mathbb{R}}\{e_1\}, P_{22}=\mathrm{span}_{\mathbb{R}}\{e_2\}$, $P_{12}=\mathrm{span}_{\mathbb{R}}\{x,y\}$. One can also check that
    	\begin{align*}
    		\{x,y\}= \langle x, y \rangle 1 = e_1 + e_2.
    	\end{align*}
    	One has $\tau \equiv 0$. $\mathrm{Spin}(\mathbb{R}^{1|2})$ is not semi-simple. However, one can check that
    	\begin{align*}
    		\beta(e_1,e_1) = \beta(e_2,e_2)=1,  \quad \beta(x,y)=2.
    	\end{align*}
    	Therefore, $(\mathrm{Spin}(\mathbb{R}^{1|2}), \beta)$ is Euclidean.
    \end{exmp}
    
    There exists an one-parametric family of $(2|2)$-dimensional Jordan superalgebras which generalizes Examples \ref{a11}, \ref{josp}, and \ref{spin12}.
    
    \begin{exmp}\label{dt}
    	Let $D(t):=\mathrm{span}_{\mathbb{K}}\{e_1,e_2\} \oplus \mathrm{span}_{\mathbb{K}}\{x,y\}$ with the product
    	\begin{align*}
    		\{e_i,e_i\}=e_i, \quad \{e_{1}, e_{2}\}=0, \quad \{e_{i}, x\}=\frac{1}{2} x, \quad \{e_{i}, y\}=\frac{1}{2} y, \quad \{x, y\}=e_{1}+t e_{2}, \quad i=1,2,
    	\end{align*}
    	where $t \in \mathbb{K}$. Note that $D(t)$ is simple if $t \neq 0$, $D(t) \cong D(\frac{1}{t})$, and that
    	\begin{enumerate}
    		\item For $t=-1$, $D(-1)\cong \mathrm{gl}(\mathbb{K}^{1|1})^{(+)}$. 
    		\item For $t=-\frac{1}{2}$, $D(-\frac{1}{2})\cong \mathrm{Josp}(\mathbb{K}^{1|2})$. 
    		\item For $t=0$, $D(0)$ has an ideal that is isomorphic to $K_3$. 
    		\item For $t=1$, $D(1) \cong \mathrm{Spin}(\mathbb{K}^{1|2})$.
    	\end{enumerate}
    	
    	Once again, one can check that $\tau \equiv 0$ for all $t \in \mathbb{K}$. $D(t)$ is not semi-simple. However, one can consider the even, symmetric, non-degenerate bilinear form $\beta$ on $D(t)$, $t \neq 0$, defined by
    	\begin{align*}
    		\beta(e_1,e_1) = 1, \quad \beta(e_2, e_2) = \frac{1}{t}, \quad \beta(x,y)=2.
    	\end{align*} 
    	It is straightforward to verify that $\beta$ is associative. Let $\mathbb{K}=\mathbb{R}$. $(D(t),\beta)$ is Euclidean for $t >0$ and pseudo-Euclidean with signature $(1,1)$ for $t<0$.
    \end{exmp}

    \begin{rmk}
    	One may wonder if it is possible to define a family of real Jordan algebras by setting $D_{ns}(t):=\mathrm{span}_{\mathbb{R}}\{e_1, e_2, x, y\}$ and 
    	\begin{align*}
    		&\{e_i,e_i\}=e_i, \quad \{e_{1}, e_{2}\}=0, \quad \{e_{i}, x\}=\frac{1}{2} x, \quad \{e_{i}, y\}=\frac{1}{2} y, \\
    		&\{x, x\} = \{y,y\} =e_{1}+t e_{2}, \quad  \{x,y\}=0, \quad i=1,2,
    	\end{align*}
    	such that $D_{ns}(1) \cong \mathrm{Spin}(\mathbb{R}^{3|0}) \cong \mathrm{UJosp}(2,0)$. This is not possible because 
    	\begin{align*}
    		\{\{x,e_1\},\{x,x\}\}=\frac{1+t}{4}, \quad \{x,\{e_1,\{x,x\}\}\} = \frac{1}{2}.
    	\end{align*}
    	The Jordan identity then forces us to set $t=1$. In other words, passing to the super setting gives us more freedom to do a ``deformation".
    \end{rmk}

    Let's take $\mathbb{K}=\mathbb{R}$ and compute the value of the pseudo-Riemannian metric $\mathcal{g}$ of the orbit-distribution on $D(t)^*$ at each point $\xi = \lambda_1 e_1^{\flat} + \lambda_2 e_2^{\flat} \in D(t)_0^*$. By Proposition \ref{gb}, one has
    \begin{align*}
    	\mathcal{g}_{\xi}(e_1^{\flat}, e_1^{\flat}) = \frac{2}{\lambda_1 + \lambda_1} \beta(e_1,e_1) &= \frac{1}{\lambda_1}, 
    	\quad
    	\mathcal{g}_{\xi}(e_2^{\flat}, e_2^{\flat}) = \frac{2}{\lambda_2 + \lambda_2} \beta(e_2,e_2) = \frac{1}{t \lambda_2}, \\
    	\mathcal{g}_{\xi}(x^{\flat}, y^{\flat}) &= \frac{2}{\lambda_1 + \lambda_2} \beta(x,y) = \frac{4}{\lambda_1 + \lambda_2}.
    \end{align*}
    Let $(e_1^*, e_2^*, x^*, y^*)$ denote the dual basis of $D(t)^*$. One has
    \begin{align*}
    	e_1^{\flat} = e_1^*, \quad e_2^{\flat} = \frac{1}{t} e_2^*, \quad x^{\flat} = 2y^*, \quad y^{\flat} = -2x^*.
    \end{align*}
    Let $\xi = \lambda_1 e_1^{\flat} + \lambda_2 e_2^{\flat} = \lambda_1' e_1^* + \lambda_2' e_2^*$, $\eta = z_1 e_1^* + z_2 e_2^* + w_1 x^*  + w_2 y^* $, and $\eta' = z_1' e_1^* + z_2' e_2^* + w_1' x^*  + w_2' y^*$, one has
    \begin{align*}
    	\mathcal{g}_{\xi}(\eta,\eta')&= \frac{1}{\lambda_1}z_1z_1' + \frac{t}{\lambda_2}z_2z_2'  + \frac{1}{\lambda_1 + \lambda_2}\left(w_1 w_2'- w_2 w_1' \right) \\
    	&= \sum_{i=1}^2\frac{1}{\lambda_i'}z_iz_i'  + \frac{1}{\lambda_1' + t \lambda_2'}\left(w_1 w_2'- w_2 w_1' \right).
    \end{align*}
    \begin{rmk}
    	Recall that $\mathcal{g}$ is not uniquely determined by its value $\mathcal{g}_{\xi}$ at $\xi$. Let $X_{\alpha}$ denote the vector field over $D(t)^*$ generated by $L_{\alpha}$, $\alpha \in D(t)$.  We have
    	\begin{align*}
    		X_{e_i} = e_i \frac{\partial}{\partial e_i} + \frac{1}{2}&\left(x \frac{\partial}{\partial x} + y \frac{\partial}{\partial y}\right), \\
    		X_x = \frac{x}{2}\left(\frac{\partial }{\partial e_1} + \frac{\partial }{\partial e_2}\right) + (e_1 + te_2) \frac{\partial }{\partial y}, 
    		&\quad 
    		X_y = \frac{y}{2}\left(\frac{\partial }{\partial e_1} + \frac{\partial }{\partial e_2}\right) - (e_1 + te_2) \frac{\partial }{\partial x},
    	\end{align*}  
    	and
    	\begin{align*}
    		\mathcal{g}(X_{e_i}, X_{e_i}) = \frac{1}{e_i}, \quad \mathcal{g}(X_x,X_y) = e_1 + t e_2.
    	\end{align*}
    	Some straightforward computations show that
    	\begin{align*}
    		\mathcal{g}(\frac{\partial }{\partial e_i}, \frac{\partial }{\partial e_i}) = \frac{1}{e_i} + \frac{xy}{2e_i^2(e_1 + te_2)}, \quad \mathcal{g}(\frac{\partial }{\partial e_1}, \frac{\partial }{\partial e_2}) = \frac{xy}{2e_1e_2(e_1 + te_2)}. 
    	\end{align*}
    	The terms generated by the odd coordinates $x$ and $y$ cannot be detected by $\mathcal{g}_{\xi}$.
    \end{rmk}
    \begin{rmk}
    	Let $S_t\mathbb{R}^d:= \bigoplus_{i=1}^d D(t)$. $S_t\mathbb{R}^d$ is an one-parametric family of $(2d|2d)$-dimensional Jordan superalgebras. Let $\{e_i^A, x_i^A, e_i^B, x_i^B\}_{i=1}^d$ be a basis of  $S_t\mathbb{R}^d$ such that $e_i^A, e_i^B, x_i^A, x_i^B$ form a basis of $D(t)$ as in Example \ref{dt}.
    	For $\xi=\sum_{\alpha \in \{A,B\}} \sum_{i=1}^d \lambda_i^{\alpha} e_i^{\alpha*} $, $\eta = \sum_{\alpha \in \{A,B\}} \sum_{i=1}^d (z_i^{\alpha} e_i^{\alpha*}  + w_i^{\alpha} x_i^{\alpha*})$, and $\eta' = \sum_{\alpha \in \{A,B\}} \sum_{i=1}^d (z_i'^{\alpha} e_i^{\alpha*} + w_i'^{\alpha} x_i^{\alpha*})$, one has
    	\begin{align*}
    		\mathcal{g}_{\xi}(\eta,\eta')= \sum_{\alpha \in \{A,B\}} \sum_{i=1}^d \frac{1}{\lambda_i^{\alpha}} z_i^{\alpha}z_i'^{\alpha}  + \sum_{i=1}^d  \frac{1}{(\lambda_i^A+ t \lambda_i^B)}\left(w_i^A w_i'^B- w_i^B w_i'^A \right).
    	\end{align*}
    \end{rmk}
    
\section*{Acknowledgements}

F. M. C. acknowledges that this work has been supported by the Madrid Government (Comunidad de Madrid-Spain) under the Multiannual Agreement with UC3M in the line of “Research Funds for Beatriz Galindo Fellowships” (C\&QIG-BG-CM-UC3M), and in the context of the V PRICIT (Regional Program of Research and Technological Innovation).
L. S. acknowledges partial support by grant SCHW893/5-1 of the
Deutsche Forschungsgemeinschaft, and also expresses his gratitude for the hospitality of the Max Planck Institute for the Mathematics in the Sciences in Leipzig during numerous visits.
This publication is based upon work from COST Action CaLISTA CA21109 supported by COST (European Cooperation in Science and Technology, www.cost.eu).

	\begin{bibsection}
		\begin{biblist}
			\bib{A-N-2000}{book}{
				author    = {Amari, S. I.},
				author	  ={Nagaoka, H.},
				publisher = {American Mathematical Society, Providence, RI},
				title     = {Methods of Information Geometry},
				date      = {2000},
			}
			\bib{Barbier2018}{article}{
				title={On structure and TKK algebras for Jordan superalgebras},
				author={Barbier, Sigiswald},
				author={Coulembier, Kevin},
				journal={Communications in Algebra},
				volume={46},
				number={2},
				pages={684--704},
				date={2018},
				publisher={Taylor \& Francis}
			}
			\bib{Carmeli11}{book}{
				title={Mathematical Foundations of Supersymmetry},
				author={Carmeli, C.},
				author={Caston, L.},
				author={Fioresi, R.},
				date={2011},
				volume={15},
				publisher={European Mathematical Society}
			}
			\bib{C-J-S-2020-02}{article}{
				author   = {Ciaglia, F. M.},
				author   = {Jost, J.},
				author   = {Schwachh\"{o}fer, L.},
				journal  = {Entropy},
				title    = {Differential geometric aspects of parametric estimation theory for states on finite-dimensional C*-algebras},
				date     = {2020},
				number   = {11},
				pages    = {1332},
				volume   = {22}
			}
		    \bib{Ciaglia23}{article}{
				title={Information geometry, Jordan algebras, and a coadjoint orbit-like construction}, 
				author={Ciaglia, Florio M. },
				author={Jost, Jürgen},
				author={Schwachhöfer, Lorenz},
				date={2023},
				journal={Symmetry, Integrability and Geometry: Methods and Applications (SIGMA)},
				volume={19},
				pages={078-27}
			}
			\bib{Kac77}{article}{
				title={Classification of simple $\mathbb{Z}$-graded Lie superalgebras and simple Jordan superalgebras.},
			    author={Kac, V.G.},
			    date={1977},
			    volume={5},
			    number={13},
			    pages={1375-1400}
			}
			\bib{Kirillov1962}{article}{
				title={Unitary representations of nilpotent Lie groups},
				author={Kirillov, Alexandre A.},
				journal={Russian Mathematical Surveys},
				volume={17},
				number={4},
				pages={53--104},
				date={1962}
			}
			\bib{Kirillov2012}{book}{
				title={Elements of the Theory of Representations},
				author={Kirillov, Alexandre A.},
				volume={220},
				date={2012},
				publisher={Springer Science \& Business Media}
			}
			\bib{Koecher99}{book}{
			    title={The Minnesota notes on Jordan algebras and their applications},
			    author={Koecher, M.},
			    date={1999},
			    volume={17},
			    publisher={Springer Science and Business Media}
			}
			\bib{Leites80}{article}{
				title={Introduction to the theory of supermanifolds},
			    author={Leites, D.A.},
			    date={1980},
			    journal={Russian Mathematical Surveys},
			    volume={35},
			    number={1},
			    pages={1}
			}
			\bib{Martin17}{article}{
				title={Classification of three-dimensional Jordan superalgebras},
			    author={Martin, M. E.},
			    date={2017},
			    eprint={1708.01963}
			}
			\bib{Paris-2009}{article}{
				author   = {Paris, M. G. A.},
				journal  = {International Journal of Quantum Information},
				title    = {Quantum Estimation for Quantum Technology},
				date     = {2009},
				number   = {1},
				pages    = {125--137},
				volume   = {7}
				}
			\bib{Racine2003}{article}{
				title={Simple Jordan superalgebras with semisimple even part},
				author={Racine, M. L.},
				author={Zel’manov, E. I.},
				journal={Journal of Algebra},
				volume={270},
				number={2},
				pages={374--444},
				date={2003},
				publisher={Elsevier}
			}
			\bib{Salmasian2010}{article}{
				title={Unitary representations of nilpotent super Lie groups},
				author={Salmasian, Hadi},
				journal={Communications in Mathematical Physics},
				volume={297},
				number={1},
				pages={189--227},
				date={2010},
				publisher={Springer}
			}
			\bib{Sussmann-1973}{article}{
				author   = {Sussmann, H. J.},
				journal  = {Transactions of the American Mathematical Society},
				title    = {Orbits of families of vector fields and integrability of distributions},
				date     = {1973},
				pages    = {171--188},
				volume   = {180},
}
		\end{biblist}
	\end{bibsection}

\end{document}